\newtheorem{thm}{Theorem}[section]
\newtheorem{cor}[thm]{Corollary}
\newtheorem{prop}[thm]{Proposition}
\newtheorem{lem}[thm]{Lemma}
\theoremstyle{definition}
\newtheorem{defn}[thm]{Definition}
\theoremstyle{remark}
\let\c@equation\c@thm
\numberwithin{equation}{section}
\title{Stable Type of the Mapping Class Group}
\author{Ilya Gekhtman}
\date{13 October 2013}
\begin{document}

\begin{abstract} 
We use dynamics of the Teichm$\ddot{\mathrm{u}}$ller geodesic flow to  show that the action of the mapping class group on the space of projective measured foliations has stable type $III_{\lambda}$ for some $\lambda>0$.
We do this by generalizing a criterion due to Bowen for a number to be in the stable ratio set, and proving some Patterson-Sullivan type results for the Thurston measure on $PMF$.
\end{abstract}

\maketitle

\tableofcontents

\section{Introduction and Statement of Results}
Let $(X,d,\nu)$ be a compact metric space endowed with a probability measure and $G$ a countable group acting quasi-invariantly on $(X,\nu)$.

The ratio set of the action, denoted by $RS(G\curvearrowright (X,\nu))$ is the essential range of the Radon-Nikodym cocycle.

\begin{defn}[Ratio Set]
A number $r\in \mathbb{R}$ is said to be in $RS(G\curvearrowright (X,\nu))$ if for every positive measure set $A\subset X$ and $\epsilon>0$ there is a subset $A'\subset A$ of positive measure and an nonidentity element $g\in \Gamma$ such that 
\begin{itemize}
\item $gA'\subset A$
\item $|\frac{d\nu\circ g}{d\nu}(b)-r|\leq \epsilon$ for all $b\in A'$.
\end{itemize}
The extended real number $+\infty$ is said to be in $RS(G\curvearrowright (X,\nu))$ if and only if for every positive measure
set $A\subset X$ and $n>0$ there exists a positive measure subset $A'\subset A$ and an element $g\in G$ such that 

\begin{itemize}
\item $gA'\subset A$
\item $\frac{d\nu\circ g}{d\nu}(b)>n$ for all $b\in A'$.
\end{itemize}
\end{defn}

In \cite{BN11}, Bowen and Nevo defined the stable ratio set  $SRS(G\curvearrowright (X,\nu))$ to be intersection over all probability measure preserving actions $G\curvearrowright (Y,\kappa)$ of the ratio sets of the product actions $G\curvearrowright (X\times Y, \nu\times \kappa)$.

\begin{defn}[Stable Ratio Set]
A number $r\in \mathbb{R}\cup\{\infty\}$ is in the stable ratio set $SRS(G\curvearrowright (X,\nu))$ if $r\in RS(G\curvearrowright (X\times Y,\nu\times \kappa))$ for every probability measure preserving action $G\curvearrowright (Y,\kappa)$ 
\end{defn}

By \cite{FM77} if the action $G\curvearrowright (X,\nu)$ is ergodic and nonatomic, $RS(G\curvearrowright (X,\nu))\setminus \{0,\infty\}$ is a closed multiplicative subgroup of $\mathbb{R}$ and can thus be classified as one of the following types:
\begin{itemize}
\item $II$ if  $RS(G\curvearrowright (X,\nu))=\{1\}$
\item $III_{0}$ if $RS(G\curvearrowright (X,\nu))=\{0, 1, \infty\}$
\item $III_{\lambda}$, $\lambda>1$ if $RS(G\curvearrowright (X,\nu))=\{0, \lambda^{n},\infty: n\in \mathbb{Z}\}$
\item $III_{1}$ if  $RS(G\curvearrowright (X,\nu))=[0,\infty]$
\end{itemize}

The action of $G$ on $(X,\nu)$ is called weak mixing if for every probability measure preserving ergodic action of $G$ on a space $(K,\mu)$ the induced action of $G$ on $(X\times K, \nu\times \mu)$ is ergodic.

It follows that if $G\curvearrowright (X,\nu)$ is weak mixing, its stable ratio set  is one of the four types just described. This is called the stable type of the action.

In \cite{BN11},  Bowen and Nevo used this notion to prove pointwise ergodic theorems for a large class of (nonamenable) groups, with the principal condition being that they admit a nonsingular action of stable type $III_{\lambda}$ for some $\lambda>0$.

In \cite{B12} Bowen proves that for $G$ a Gromov hyperbolic group, $X$ its Gromov boundary, and $\nu$ the Patterson-Sullivan measure on $X$, if $G\curvearrowright (X,\nu)$ is weak mixing then it has stable type $III_{\lambda}$ for some $\lambda\in (0,1]$.
In this paper, we prove an analogous result for the mapping class $Mod(S)$ of a surface $S$ of genus at least 2 acting on the space $PMF$ of projective measured foliations with the Thurston measure. 
\begin{thm}
The action $Mod(S)\curvearrowright (PMF,\nu)$ has stable type $III_{\lambda}$ for some $\lambda>0$.
\end{thm}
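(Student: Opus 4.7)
The plan is to adapt Bowen's strategy from the Gromov hyperbolic setting to the Teichm\"uller/mapping class group setting, substituting the Thurston measure on $PMF$ for the Patterson-Sullivan measure on the Gromov boundary and the Teichm\"uller geodesic flow on the moduli space of quadratic differentials for the unit tangent flow on the Gromov hyperbolic quotient. The overall scheme is: (i) show the Radon-Nikodym cocycle of $\mathrm{Mod}(S) \curvearrowright (PMF,\nu)$ is a Busemann-type cocycle; (ii) prove a criterion, in the spirit of Bowen, that reduces membership in $SRS$ to the existence of many closed orbits of the Teichm\"uller flow with specified return data; (iii) feed in mixing of the Teichm\"uller flow and density of pseudo-Anosov dilatations to exhibit a $\lambda>0$ that lies in $SRS$.

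First I would establish the Patterson-Sullivan-type description of $\nu$. The Thurston measure has a known homogeneity under scaling and transforms under $\mathrm{Mod}(S)$ with a quasi-conformal law of exponent equal to the dimension $h=6g-6$, and the Radon-Nikodym derivative $\tfrac{d\nu\circ g}{d\nu}(\xi)$ is (up to bounded multiplicative error) $\exp(-h\,\beta_\xi(x_0, g^{-1}x_0))$, where $\beta$ is a Busemann-type cocycle for Teichm\"uller distance (or, equivalently, $-\log$ extremal length). The mixing and equidistribution properties of the Teichm\"uller flow due to Masur, Veech, and Athreya identify the flow-invariant lift of $\nu\otimes\nu$ on the space of geodesics as the measure of maximal entropy, so $\nu$ really is Bowen--Margulis--Sullivan. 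This produces the conformal calculus needed to compute ratios.

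Next I would generalize Bowen's criterion. Bowen's original statement says: if there exist sequences of group elements $g_n\in G$ and subsets $A_n\subset X$ of controlled measure such that $g_n A_n \subset A_n$, $A_n$ shrinks to a point $\xi$, and $\tfrac{d\nu\circ g_n}{d\nu}\to r$ uniformly on $A_n$, then $r\in SRS$. The point is that such a ``shrinking target'' condition survives taking products with probability-preserving actions because the Rokhlin lemma / almost-invariance machinery applied to $(Y,\kappa)$ only mildly perturbs the dynamical behavior on $A_n\times Y$. I would reformulate and reprove this in the setting where $X=PMF$ is not a Gromov boundary but admits a coarse Busemann function coming from Teichm\"uller geometry, and where the relevant dynamics are the Teichm\"uller flow; the structural ingredient is that closed orbits of the flow yield pseudo-Anosov elements $g_n$ with precisely computable Busemann translation and fixed points on $PMF$ whose stable/unstable neighborhoods can be made to satisfy $g_n A_n \subset A_n$.

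Finally, the $\lambda>0$ is produced as follows. By mixing of the Teichm\"uller flow and the shadow lemma (the Patterson-Sullivan statement from step one), the closed-orbit counting function of the flow is asymptotic to $e^{ht}/(ht)$, so closed orbits (and hence pseudo-Anosov dilatations) have length spectrum whose exponentials are dense in $(0,\infty)$ after multiplication by $h$. Pick any such closed orbit and the corresponding pseudo-Anosov $\varphi$ with attracting fixed point $\xi_+\in PMF$ and dilatation $\log\mu(\varphi)=\ell$. For shrinking neighborhoods $A_n$ of $\xi_+$ the Busemann cocycle is nearly constant, so $\tfrac{d\nu\circ \varphi^n}{d\nu}\to e^{-hn\ell}$ uniformly on $A_n$. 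By the generalized Bowen criterion, $\lambda:=e^{-h\ell}\in SRS$, and since $\lambda>0$ this forces the stable type to be $III_\lambda$ for some $\lambda>0$ as claimed (using that $\mathrm{Mod}(S)\curvearrowright(PMF,\nu)$ is weak mixing, so the stable ratio set is a closed subgroup and lies in one of the four listed classes).

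I expect the main obstacle to be the generalization of Bowen's criterion: in the hyperbolic setting the criterion leans on genuine Gromov hyperbolicity of the boundary and on uniform shadow estimates, neither of which is available for $PMF$. Handling this requires replacing hyperbolic shadow estimates with the non-uniform Teichm\"uller analogues (e.g.\ work on the thin part and random walk/boundary convergence estimates) and verifying that the ``almost invariance over $(Y,\kappa)$'' argument still goes through once the shadow lemma is only quasi-uniform off the thin part.
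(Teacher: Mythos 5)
There is a genuine gap at the heart of your plan, in step (ii). The criterion you attribute to Bowen --- a single sequence $g_n$ and shrinking sets $A_n$ with $g_nA_n\subset A_n$ and uniform convergence of the Radon--Nikodym derivative to $r$ implies $r\in SRS$ --- is not Bowen's criterion, and as stated it is neither proved nor plausible. Even for the ordinary ratio set the definition quantifies over \emph{every} positive measure set $A\subset PMF$: the derivative of a fixed pseudo-Anosov $\varphi$ is close to $e^{-h\ell}$ only near its attracting fixed point, and an arbitrary $A$ need not meet any small neighborhood of that point; translating $A$ there by ergodicity contaminates the cocycle by the uncontrolled derivative of the translating element. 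For the \emph{stable} ratio set the situation is worse: one must verify membership in the ratio set of $Mod(S)\curvearrowright (PMF\times Y,\nu\times\kappa)$ for an arbitrary p.m.p.\ action $(Y,\kappa)$, and a single sequence of group elements gives no control of where it sends a prescribed positive-measure subset of $Y$. Your appeal to ``Rokhlin lemma / almost-invariance machinery only mildly perturbs'' is exactly the point that needs a proof, and it is where the actual work lies. The mechanism that makes the product step work (both in Bowen's paper and here) is \emph{multiplicity}: one needs, for $\nu$-typical $b$, exponentially many group elements $g$ (the sets $Y_n(b)$, of size $\asymp_m e^{hn}$) with comparable, uniformly bounded and uniformly separated values of the cocycle difference $R(g^{-1},b')-R(g^{-1},b)$, organized into the functions $\Upsilon_n(g,b,b')$ and the measures $\zeta_{n,m}$; then, given $(Y,\kappa)$, one passes to a compact topological model with the $1/3$--$3$ ball-ratio property and thickens $\Upsilon_n$ by averaging over small balls in $K$, checking the thickened family is still admissible and has the same limit measures. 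None of this is replaced by your fixed-point/shrinking-target argument, so the key assertion $\lambda=e^{-h\ell}\in SRS$ is unsupported. (A further symptom: if your criterion were valid, density of pseudo-Anosov dilatations would immediately upgrade the conclusion to type $III_1$, which neither this paper nor Bowen's hyperbolic-group result achieves.)

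Your steps (i) and (iii)'s geometric inputs are closer to the mark, and partially overlap with the paper: the conformal property $\tfrac{d\nu_x}{d\nu_y}(\eta)=e^{h\beta_\eta(x,y)}$ (which in fact holds exactly, not merely up to bounded error), a Sullivan-type shadow lemma, and recurrence to the thick part are indeed the paper's tools. But the paper uses them not to study powers of a single pseudo-Anosov; rather it uses Dowdall--Duchin--Masur statistical hyperbolicity, Rafi's fellow-traveling in the thick part, and the Athreya--Bufetov--Eskin--Mirzakhani lattice-point asymptotics to count the elements of $Y_n(b)$ and to estimate $\nu(Z_n(g))$, thereby verifying the axioms of a \emph{relatively} admissible family on the good sets $\Omega(n,m)$ (directions recurring a definite proportion of time to the thick part), and then invokes the general Section~2 theorem that any point in the support of the limit of the $\zeta_{n,m}$ lies in the stable ratio set. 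To repair your proposal you would need to either prove your shrinking-target criterion for stable ratio sets (which I do not believe is true in this generality) or replace it with an admissibility-type argument that quantifies over many group elements and handles arbitrary p.m.p.\ extensions.
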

We prove  Theorem 1.3 by introducing the notion of a family of functions $\Upsilon_{n}:G\times X\times X, n\in \mathbb{N}$ admissible relative to a collection of subsets $\Omega(n,m),n,m\in \mathbb{N}$ of $PMF$ for $G\curvearrowright (X,\nu)$. This generalizes Bowen's notion of admissible family from \cite{B12}.
We show in Section 2  that the existence of a relatively admissible family for a weakly mixing action $G\curvearrowright (X,\nu)$ implies that action has stable type $III_{\lambda}$ for some $\lambda>0$. We then show in Sections 4 and 5 that there exists a relatively admissible family for the  action of the mapping class group $Mod(S)$ on $PMF$ with the Thurston measure. 

 While the Teichm$\ddot{\mathrm{u}}$ller space $Teich(S)$ is not globally hyperbolic in any reasonable sense (eg it is not Gromov hyperbolic and not $CAT(0)$), some parts of it exhibit many aspects of hyperbolicity. In particular, Teichm$\ddot{\mathrm{u}}$ller geodesic segments spending a uniform proportion of the time over compact parts of moduli space resemble those in Gromov hyperbolic spaces.   The Thurston measure can be considered as a conformal density for the Teichm$\ddot{\mathrm{u}}$ller metric, and in Section 4 we use this conformal property to prove a relative analogue of Sullivan's shadow lemma estimating shadows from a fixed origin of balls in $Teich(S)$ where the connecting segment spends a  uniform proportion in the thick part. The general strategy of the proof is to use reccurence estimates of the Teichm$\ddot{\mathrm{u}}$ller geodesic flow to show that various quantities are asymptotically dominated by the contribution of the thick part.  This allows us to construct "relative" versions of Bowen's admissible families.

Roughly,  the subsets $\Omega(n,m)$ consist of elements of $PMF$ corresponding to geodesic rays from the basepoint $o$ that look hyperbolic near distance $n$ from $o$, with the hyperbolicity weakening as $m$ grows.
The functions $\Upsilon_{n}$ are roughly defined as follows.

$$\Upsilon_{n}(g,b,b')=\frac{1_{Y_{n}(b)}(g)}{|Y_{n}(b)|}\frac{1_{Z_{n}(g)}(b')}{\nu(Z_{n}(g))}.$$
Here, $|Y_{n}(b)|$ denotes the cardinality of $Y_{n}(b)$.
A mapping class element $g$ is in $Y_{n}(b)$ if it moves $o$ a distance of approximately $2n$, $[o,go]$ fellow travels $[o,b)$ for time slightly more than halfway and $[o,go]$ keeps exhibiting hyperbolic behavior after separating from $[o,b)$;
$Z_{n}(g)$ is the subset of $PMF$ consisting of those $b'$ such that $[o,go]$ follows $[o,b')$ slightly less than half way and $[o,b')$ keeps exhibiting hyperbolic behavior after separating from $[o,go]$. 

The connection with stable type is made by the following:

\begin{thm}
Suppose $\Upsilon_{n}, n\in \mathbb{N}$ is admissible relative to $\Omega(m,n),m,n\in \mathbb{N}$ for $G\curvearrowright (X,\nu)$
For each $m$ let $\zeta_{m}$ be any weak-* limit of the $\zeta_{n,m}$ as $n\to \infty$. Let $\zeta$ be any weak-* limit of the $\zeta_{m}$. Then $e^{T}$ is contained in the stable ratio set of $G\curvearrowright (X,\nu)$ for every $T$ in the support of $\zeta$.
\end{thm}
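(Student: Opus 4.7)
The plan is to generalize Bowen's argument from \cite{B12}, where the admissibility axioms on $\Upsilon_n$ are designed precisely so that the weak-* limits of the auxiliary measures $\zeta_{n,m}$ furnish elements of the stable ratio set. Fix any probability-measure-preserving action $G \curvearrowright (Y,\kappa)$; since $\kappa$ is $G$-invariant, the Radon-Nikodym cocycle on $X \times Y$ is the pullback of the one on $X$, so proving $e^T \in RS(G \curvearrowright (X \times Y, \nu \times \kappa))$ amounts to exhibiting, for each positive-measure $A \subset X \times Y$ and each $\epsilon > 0$, a positive-measure $A' \subset A$ and a nonidentity $g \in G$ with $gA' \subset A$ and $|\log(d\nu \circ g/d\nu) - T| < \epsilon$ on the $X$-projection of $A'$.

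The key conceptual step is to view $\zeta_{n,m}$ as the push-forward to $\mathbb{R}$ of the total mass $\sum_g \Upsilon_n(g,b,b')\,d\nu(b)\,d\nu(b')$, cut off by indicator functions of $\Omega(m,n)$, under the log-cocycle $(g,b,b') \mapsto \log(d\nu \circ g/d\nu)(b')$. The condition $T \in \mathrm{supp}(\zeta)$ therefore says that, for arbitrarily large $m$ and then $n$, a uniformly positive proportion of the $\Upsilon_n$-mass is concentrated on triples whose Radon-Nikodym logarithm lies in an arbitrarily small neighborhood of $T$. A Fubini swap turns this into the existence of a group element $g$ and a set $E_g \subset Z_n(g)$ with $\nu(E_g)$ bounded below by a fixed constant (depending on $m$ but not on $n$) on which the cocycle takes values in $(T-\epsilon, T+\epsilon)$. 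Combined with weak mixing of $G \curvearrowright (X,\nu)$, which ensures ergodicity of the diagonal action on $X \times Y$, this produces, for the prescribed $A$, a positive-measure $A' \subset (E_g \times Y) \cap g^{-1}A$ witnessing the required ratio-set condition.

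I expect the principal obstacle to be propagating the integrated weak-* statement about $\zeta$ into the pointwise data required by the definition of the ratio set, namely that the conditional distribution of the cocycle given the triple $(g,b,b')$ is genuinely concentrated near $T$, not just on average. This forces the admissibility framework to supply both uniform lower bounds on conditional $\nu$-masses of $Z_n(g)$ and the geometric statement that for $g \in Y_n(b)$ the derivative $d\nu \circ g/d\nu$ is essentially constant on $Z_n(g)$ up to errors of size $\epsilon$; in the mapping class group setting this last fact is the content of the relative shadow lemma proved in Section 4. The double indexing by $\Omega(m,n)$ is what makes the whole strategy go through: by letting $n \to \infty$ first and $m \to \infty$ second, one extracts the asymptotic geometric behaviour while keeping a definite amount of mass, and only afterwards degrades the required strength of hyperbolic behaviour so as to pick up every $T \in \mathrm{supp}(\zeta)$.
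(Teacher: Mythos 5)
There is a genuine gap at the heart of your proposal: the passage from ``$T\in\mathrm{supp}(\zeta)$'' to the pointwise ratio-set condition for \emph{every} positive-measure set $A$ is the entire content of the theorem, and your sketch does not supply a mechanism for it. You invoke weak mixing to get ergodicity of the diagonal action on $X\times Y$, but weak mixing is not a hypothesis of this theorem (it enters only in Corollary 2.3, to upgrade ``stable ratio set $\not\subset\{0,1,\infty\}$'' to a type classification), and ergodicity would not suffice anyway: knowing that some $g$ returns a positive-measure piece of $A$ into $A$ gives no control on the value of the Radon--Nikodym cocycle on that returning piece, and knowing that the cocycle is near $T$ on some set $E_g$ of definite measure does not make $E_g$ meet $A\cap g^{-1}A$ in positive measure. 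The paper resolves exactly this difficulty by a Maharam-type argument: it introduces the skew action $g(b,t)=(gb,t+R(g,b))$ on $X\times\mathbb{R}$, the averaging operators $W_{n,m},X_{n,m},Y_{n,m},L_{n,m}$ on $L^{1}(\nu\times\theta)$ whose uniform $L^{1}$ bounds are precisely the fifth bullet of admissibility, the fact that $L_{n,m}$ is asymptotically the identity on $G$-invariant functions (Proposition 2.6), and Bowen's Lemma 3.8: if $e^{T}$ were not in the ratio set there would exist a $G$-invariant set $A\subset X\times\mathbb{R}$ with $(b,t)\in A\Rightarrow(b,t+T+t')\notin A$ for $|t'|<\epsilon$, and then
$$\|1_{A}-L_{n,m}1_{A}\|\;\geq\;\bigl((\nu\times\theta)(A)-D(m)\bigr)\,\zeta_{n,m}(T-\epsilon,T+\epsilon),$$
which in the limit contradicts $T\in\mathrm{supp}(\zeta)$. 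Neither this argument nor any substitute for it appears in your proposal; the ``Fubini swap'' paragraph only restates what must be proved. (Also, your use of $Z_n(g)$ conflates the abstract theorem with the Section 4 construction for $Mod(S)$.)

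A second, smaller omission concerns the ``stable'' part. You correctly note that the cocycle on $X\times Y$ pulls back from $X$, but the admissibility axioms are metric (the second bullet requires $d(b,b')<f_m(n)$), so to run the ratio-set argument on the product one must actually produce a relatively admissible family for $G\curvearrowright(X\times K,\nu\times\kappa)$. The paper does this by passing to a topological model of the pmp action in which all $\epsilon$-balls have comparable measure (Lemma 2.10) and setting $\Upsilon'_{n}(g,b,k,b',k')=1_{B(k,\rho(n,g))}(k')\Upsilon_{n}(g,b,b')/\kappa(B(k,\rho(n,g)))$; since the $K$-factor is measure preserving this leaves the measures $\zeta_{n,m}$ unchanged, and the ratio-set version (Proposition 2.8) applied to the product finishes the proof. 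Your proposal would need this construction, or an equivalent one, to make the reduction to an arbitrary pmp factor rigorous.
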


It seems that a simplified version of our argument in Section 5 can be used to construct pseudo-admissible families for the actions of nonuniform lattices in manifolds of pinched variable negative curvature on their boundary spheres, proving an analogue of Theorem 1.3 for these actions.
\subsection{Acknowledgements}
I would like to thank my advisor, Alex Eskin, for his guidance, encouragement and useful conversations. I would like to thank Lewis Bowen for explaining the Bowen-Nevo notion of stable type to me at UCLA in April 2013, and IPAM for making my visit there possible.  I would like to thank Jayadev Athreya, Moon Duchin and Howard Masur for useful conversations.

\section{Relatively-Admissible Families}
Let $(X,d,\nu)$ be a compact metric space endowed with a probability measure and $G$ act ergodically and quasi-invariantly on $X$.
The action of $G$ on $(X,\nu)$ is called weak mixing if for every probability measure preserving ergodic action of $G$ on a space $(K,\mu)$ the induced action of $G$ on $(X\times K, \nu\times \mu)$ is ergodic.
\begin{defn}[Relatively Admissible Families]
A  family of functions $\Upsilon_{n}: G\times X\times X\to \mathbb{R}$, $n,m\in \mathbb{N}$ will be called admissible relative to a family of closed subsets $\Omega(n,m)\subset X$ if:
\begin{itemize}

\item There are $D(m)>0$ with $\lim_{m\to \infty}D(m)=0$ such that $$\nu(\Omega(n,m))>1-D(m)$$ for all $n,m\in \mathbb{N}$

\item For each $m$ there is a function $f_{m}:\mathbb{N}\to \mathbb{R}$ with $f_{m}(n)\to 0$ as $n\to \infty$ such that for all $(g,b,b')$ with $b\in \Omega(n,m)$ and $\Upsilon_{n}(g,b,b')>0$ we have $d(b,b')<f_{m}(n)$ and $d(g^{-1}b,g^{-1}b')<f_{m}(n)$

\item Let $$R(g,\eta)=\log \frac{d{\nu\circ g}}{d\nu}(\eta).$$ For each $m$ there are constants $C(m)>0,N(m)>0$ such that if $n>N(m)$ then  

$$|R(g^{-1},b)|+|R(g^{-1},b')|<C(m)$$ and 

$$|R(g^{-1},b)-R(g^{-1},b')|\geq 1/C(m)$$ for a.e. $(g,b,b')$ with $b\in \Omega(n,m)$ and $\Upsilon_{n}(g,b,b')>0$.

\item $\sum_{g \in G}\int \Upsilon_{n}(g,b,b')d\nu(b')=1$ for every $n>0$ and $b \in PMF$.

\item There exists constants $C(m)>0$ such that the following three quantities are bounded above by $C(m)$ for all $n>N(m)$
$$\int_{b\in \Omega(n,m)} \sum_{g \in G} \Upsilon_{n}(g,b,b') d\nu(b)$$ for a.e. $b'\in X$
\ \
$$\int_{b\in \Omega(n,m)} \sum_{g \in G} \Upsilon_{n}(g,b,gb') \frac{d{\nu\circ g}}{d\nu}(b') d\nu(b)$$ for a.e. $b'\in X$
\ \
$$\int \sum_{g \in G} 1_{\Omega(n,m)}(gb)\Upsilon_{n}(g,gb,b') \frac{d{\nu\circ g}}{d\nu}(b') d\nu(b')$$ for a.e. $b\in X$
\end{itemize}
\end{defn}

Define a measure $\zeta_{n,m}$ on $\mathbb{R}$ by
$$\zeta_{n,m}(E)=\sum_{g\in G} \int \int 1_{E}(R(g^{-1},b')-R(g^{-1},b)) 1_{\Omega(m,n)}(b)\Upsilon_{n}(g,b,b')d\nu(b) d\nu(b')$$
In this section we will prove:
\begin{thm}
Suppose $\Upsilon_{n}, n\in \mathbb{N}$ is admissible relatively to $\Omega(m,n),m,n\in \mathbb{N}$ for $G\curvearrowright (X,\nu)$
For each $m$ let $\zeta_{m}$ be any weak-* limit of the $\zeta_{n,m}$ as $n\to \infty$. Let $\zeta$ be any weak-* limit of the $\zeta_{m}$. Then $e^{T}$ is contained in the stable ratio set of $G\curvearrowright (X,\nu)$ for every $T$ in the support of $\zeta$.
\end{thm}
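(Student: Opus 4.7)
The strategy is to mimic Bowen's argument in \cite{B12} in the relative setting. Fix any probability-measure-preserving ergodic action $G\curvearrowright(Y,\kappa)$, a positive-measure set $A\subset X\times Y$, and $\epsilon>0$. Since the Radon-Nikodym cocycle of the product action depends only on the $X$-coordinate, it suffices to produce a nonidentity $g\in G$ and a positive-measure $A'\subset A$ with $gA'\subset A$ and $|R(g,b)-T|<\epsilon$ on $A'$. Because $T\in\mathrm{supp}(\zeta)$, unpacking the two successive weak-$*$ limits yields a constant $\delta>0$, an arbitrarily large $m$ (in particular with $D(m)$ small), and arbitrarily large $n$ for which $\zeta_{n,m}((T-\epsilon/2,T+\epsilon/2))>\delta$.

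The main device is the bilinear form on $L^\infty(X\times Y)$ given by
\begin{align*}
\Phi_{n,m}(F,H) := \sum_{g\in G}\iiint & \mathbf{1}_{\Omega(n,m)}(b)\,\mathbf{1}_{U}\!\bigl(R(g^{-1},b')-R(g^{-1},b)\bigr) \\ & \times F(b,y)\,H(b',gy)\,\Upsilon_n(g,b,b')\,d\nu(b)\,d\nu(b')\,d\kappa(y),
\end{align*}
where $U=(T-\epsilon/2,T+\epsilon/2)$. A change of variable $y\mapsto g^{-1}y$ (using $\kappa$-invariance) together with the three uniform bounds in the fifth bullet of admissibility shows that $|\Phi_{n,m}(F,H)|\le C(m)\|F\|_2\|H\|_2$ for all $n>N(m)$.

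The heart of the argument is the lower bound $\Phi_{n,m}(\mathbf{1}_A,\mathbf{1}_A)>0$ for suitably chosen $m,n$. After an $L^1$-approximation of $\mathbf{1}_A$ by continuous functions, the closeness $d(b,b')<f_m(n)$ and $d(g^{-1}b,g^{-1}b')<f_m(n)$ from the second bullet lets us replace $H(b',gy)$ by $H(b,gy)$ up to an error tending to $0$ as $n\to\infty$ with $m$ fixed. Integrating the factor $\mathbf{1}_U\Upsilon_n$ in $b'$ produces a non-negative kernel $W_{n,m}(g,b)$ whose total mass in $(g,b)$ equals $\zeta_{n,m}(U)\geq\delta$, and the expression reduces to $\sum_g \iint \mathbf{1}_A(b,y)\mathbf{1}_A(b,gy)\,W_{n,m}(g,b)\,d\nu(b)\,d\kappa(y)$. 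Weak mixing of $G\curvearrowright(X\times Y,\nu\times\kappa)$ forces this weighted average to be asymptotically bounded below by $\delta\cdot(\nu\times\kappa)(A)^2$, so it is strictly positive for $m$ then $n$ large enough. Positivity produces a positive-measure family of quadruples $(g,b,b',y)$ with $(b,y),(b',gy)\in A$, $b\approx b'$, $g^{-1}b\approx g^{-1}b'$, and $R(g^{-1},b')-R(g^{-1},b)\in U$; pigeonholing on $g$ and using the fellow-traveling relations of bullet two to relabel produces a single nonidentity $g\in G$ and a positive-measure $A'\subset A$ on which $gA'\subset A$ and $|R(g,\cdot)-T|<\epsilon$.

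The principal obstacle is this lower bound together with the extraction step. The three $L^1$-bounds of bullet five must be used to dominate the contribution of $\Omega(n,m)^c$ and to absorb overcounting in the sum over $g$, while the $L^1$-approximation of $\mathbf{1}_A$ has to be interleaved carefully with the decay $f_m(n)\to 0$, so that the limits are taken in the order $n\to\infty$ first and then $m\to\infty$ to erode $D(m)$. Translating a strictly positive value of $\Phi_{n,m}(\mathbf{1}_A,\mathbf{1}_A)$ into an honest partial isomorphism of the product space with the right Radon-Nikodym derivative is the technical core of the argument and mirrors the Rokhlin-type extraction in \cite{B12}.
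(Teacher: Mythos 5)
Your overall architecture differs from the paper's, and the place where it breaks is the lower bound $\Phi_{n,m}(\mathbf{1}_A,\mathbf{1}_A)>0$. You assert that weak mixing of $G\curvearrowright (X\times Y,\nu\times\kappa)$ ``forces'' the weighted correlation $\sum_g\iint \mathbf{1}_A(b,y)\mathbf{1}_A(b,gy)\,W_{n,m}(g,b)\,d\nu\,d\kappa$ to be asymptotically at least $\delta\,(\nu\times\kappa)(A)^2$. Two problems: first, weak mixing of $G\curvearrowright(X,\nu)$ is not a hypothesis of Theorem 2.2 (only ergodicity is assumed in Section 2; weak mixing enters only in Corollary 2.3). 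Second, and more seriously, even granting weak mixing, ergodicity of the product gives no quantitative lower bound for correlations weighted by the specific, $b$-dependent, non-averaging kernels $W_{n,m}(g,b)$; such a statement is an equidistribution/ergodic-theorem-type input along these weights, which is essentially what the Bowen--Nevo machinery is meant to produce downstream, so assuming it here is circular. The final extraction step is also only sketched: positivity yields quadruples with $(b,y)\in A$, $(b',gy)\in A$ and $b\approx b'$, but the ratio-set condition needs genuine membership $gA'\subset A$ for a positive-measure $A'\subset A$, and passing from approximate to exact membership is precisely the hard point you defer to a ``Rokhlin-type extraction.''

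The paper avoids both difficulties by a different route. It first proves the ratio-set version (Proposition 2.8) by contradiction: if $e^T$ is not in the ratio set, Lemma 3.8 of \cite{B12} gives a $G$-invariant positive-measure set $A\subset X\times\mathbb{R}$ in the Maharam extension avoided by the translation by $T$; taking $f=\mathbf{1}_A$ and computing $\|f-L_{n,m}f\|$ against $\zeta_{n,m}$ contradicts the convergence $\limsup_n\|f-L_{n,m}f\|\le D(m)\|f\|$ for invariant $f$ (Proposition 2.6), which rests only on the $L^1$ bounds and approximation properties of the operators $W_{n,m},X_{n,m},Y_{n,m}$. It then upgrades to the stable ratio set not by a direct correlation argument on $X\times Y$, but by taking a topological model of the pmp action with the ball-regularity property $1/3\le\kappa(B(x,\epsilon))/\kappa(B(y,\epsilon))\le 3$ (Lemma 2.10) and defining $\Upsilon'_n(g,b,k,b',k')=\mathbf{1}_{B(k,\rho(n,g))}(k')\Upsilon_n(g,b,b')/\kappa(B(k,\rho(n,g)))$, which is again relatively admissible and, since the $K$-factor is measure preserving, has the same measures $\zeta_{n,m}$; Proposition 2.8 applied to the product then finishes. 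If you want to salvage your approach, you would need to either prove the required lower bound for these particular weights (no such statement is available) or replace the positivity argument by the invariant-set contradiction as in the paper.
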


Note, by the third bullet of Definition 2.1 and the fact that the $\zeta_{n,m}$ are measures of total mass $1-D(m)<||\zeta_{n,m}||<1$, for each $m$ such a weak-* limit $\zeta_{m}$ must exist and have support bounded away from $0$. Moreover since $\zeta_{n,m}(E)\geq \zeta_{n,m'}(E)$ for $m>m'$ and all measurable $E$ we have $\zeta_{m}(E)\geq \zeta_{m'}(E)$ so any weak * limit $\zeta$ of the $\zeta_{m}$ is a probability measure whose support has a nonzero point.

It follows that the stable ratio set is not contained in $\{0, 1, \infty\}$.

We thus obtain 
\begin{cor}
If  $G\curvearrowright (X,\nu)$ is weak mixing, and there exists a relatively admissible family for this action, then the action has stable type $III_{\lambda}$ for some $\lambda \geq 1$.
\end{cor}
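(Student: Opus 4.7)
The plan is to deduce the corollary in a short argument that combines Theorem~2.2 with the Feldman--Moore classification. The first step is to verify the claim made in the remark preceding the corollary: namely that the iterated weak-$*$ limit $\zeta$ is a genuine probability measure whose support contains at least one point $T \neq 0$. To do this I would unpack the bounds in Definition~2.1. The fourth and fifth bullets give the total mass bound $\|\zeta_{n,m}\| \in [1-D(m),\,1]$, while the third bullet forces the support of each $\zeta_{n,m}$ to avoid the interval $(-1/C(m),\,1/C(m))$. Combined with the monotonicity $\zeta_{n,m}(E) \geq \zeta_{n,m'}(E)$ for $m \geq m'$ noted in the remark and standard weak-$*$ lower semicontinuity on open sets, these force $\zeta$ to be a probability measure whose support contains some $T \neq 0$.

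Having produced such a $T$, Theorem~2.2 immediately yields $e^{T} \neq 1$ as an element of $SRS(G \curvearrowright (X,\nu))$. The remaining step is to convert this into a statement about stable type. By the weak mixing hypothesis, for every ergodic probability measure preserving action $G \curvearrowright (Y,\kappa)$ the product action on $(X \times Y, \nu \times \kappa)$ is ergodic, so by \cite{FM77} its ratio set minus $\{0,\infty\}$ is a closed multiplicative subgroup of $\mathbb{R}_{>0}$. Taking intersections over all such $(Y,\kappa)$ preserves this structure, so the stable ratio set belongs to one of the four classes listed in the introduction.

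The presence of $e^{T} \neq 1$ in $SRS$ rules out the two types (namely $II$ and $III_0$) whose positive-real content is just $\{1\}$, leaving stable type $III_\lambda$ for some $\lambda \geq 1$ (with $\lambda = 1$ corresponding to type $III_1$). I do not expect any serious obstacle: the real work of the section is in Theorem~2.2, and this corollary is essentially a formal consequence. The only point requiring a moment's care is that the successive weak-$*$ limits interact correctly with the uniform support-exclusion coming from the third bullet of Definition~2.1, which is why I would carry out the monotonicity/lower-semicontinuity bookkeeping explicitly in the first paragraph.
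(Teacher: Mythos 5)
Your proposal is correct and takes essentially the same route as the paper: the remark immediately preceding the corollary carries out exactly your first paragraph (note only that the mass bound $1-D(m)<\|\zeta_{n,m}\|\leq 1$ comes from the first and fourth bullets of Definition 2.1, not the fifth), and the corollary is then deduced, as you do, by feeding a nonzero support point $T$ of $\zeta$ into Theorem 2.2 and invoking the Feldman--Moore classification, which applies because weak mixing makes each product action ergodic. There is no substantive difference between your argument and the paper's.
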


Define the following operators.

$$L_{n,m}f(b,t)=\sum_{g\in G}\int f(b,t+R(g^{-1},b')-R(g^{-1},b))1_{\Omega(n,m)}(b)\Upsilon_{n}(g,b,b')d\nu(b')$$
$$W_{n,m}f(b,t)=\sum_{g\in G}\int f(b',t)1_{\Omega(n,m)}(b)\Upsilon_{n}(g,b,b')d\nu(b')$$
$$X_{n,m}f(b,t)=\sum_{g\in G}\int f(g^{-1}b',t+R(g^{-1},b'))1_{\Omega(n,m)}(b)\Upsilon_{n}(g,b,b')d\nu(b')$$
$$Y_{n,m}f(b,t)=\sum_{g\in G}\int f(g^{-1}b,t+R(g^{-1},b'))1_{\Omega(n,m)}(b)\Upsilon_{n}(g,b,b')d\nu(b')$$

Let $A_{t}(r):\mathbb{R}\to \mathbb{R}$ be addition by $t$ so that $A_{t}(r)=t+r$.
Let $\theta$ be a probability measure on $\mathbb{R}$ equivalent to Lebesgue measure such that for every $D>0$ there exists some $D'>0$ such that  for every $t_0 \in \mathbb{R}$ with $|t_0|\leq D$ we have 
$$\frac{d \theta \circ A_{t_0}}{d\theta} \leq D'$$
For example, we could choose $\theta$ to satisfy
$$d\theta=(1/2)e^{-|t|}dt$$
\begin{lem}
For each $m$ there exists a $C_{1}(m)>0$ independent of $n$  such that the $L^{1}$ norm of each $W_{n,m},X_{n,m},Y_{n,m}$ is bounded above by $C_{1}(m)$.
\end{lem}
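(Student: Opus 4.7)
The plan is to estimate $\|Of\|_{L^{1}(d\nu\times d\theta)}$ for $O\in\{W_{n,m},X_{n,m},Y_{n,m}\}$ by inserting the defining formula, swapping the order of summation and integration via Fubini--Tonelli, and recognizing the resulting inner weight as one of the three bounded quantities in the fifth bullet of Definition 2.1. The two $m$-dependent constants that enter are the constant $C(m)$ which, by that fifth bullet, bounds each of the three inner weights, and a constant $D'(m)$ obtained from the hypothesis on $\theta$ by applying the bound $|R(g^{-1},b)|+|R(g^{-1},b')|<C(m)$ from the third bullet. The output will be $C_{1}(m):=D'(m)C(m)$.

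For $W_{n,m}$ the argument of $f$ is $(b',t)$ with no shift, so Fubini directly produces the inner weight $\sum_{g}\int 1_{\Omega(n,m)}(b)\Upsilon_{n}(g,b,b')\,d\nu(b)$, which is precisely the first of the three bounded expressions. This gives $\|W_{n,m}f\|_{L^{1}}\leq C(m)\|f\|_{L^{1}}$, with no need for the $\theta$-hypothesis or change of variables.

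For $X_{n,m}$ the $t$-argument of $f$ is translated by $R(g^{-1},b')$. On the support of $1_{\Omega(n,m)}(b)\Upsilon_{n}(g,b,b')$ this shift is bounded in absolute value by $C(m)$, so applying the bounded-density hypothesis on $\theta$ with $t_{0}=R(g^{-1},b')$ absorbs the translation at the cost of a uniform factor $D'(m)$. Next I change variables $b'=gc$ in the inner $d\nu(b')$ integral via the identity $\int F(b')\,d\nu(b')=\int F(gc)\,\tfrac{d\nu\circ g}{d\nu}(c)\,d\nu(c)$; the first argument $g^{-1}b'$ of $f$ becomes $c$ and the Radon--Nikodym factor $\tfrac{d\nu\circ g}{d\nu}(c)$ appears. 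After Fubini the remaining inner weight is exactly the second of the three bounded quantities, yielding $\|X_{n,m}f\|_{L^{1}}\leq D'(m)C(m)\|f\|_{L^{1}}$.

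For $Y_{n,m}$ the same $\theta$-translation argument absorbs the $R(g^{-1},b')$ shift, but now the first argument of $f$ is $g^{-1}b$, so I instead change variables $b=gc$ in the outer $d\nu(b)$ integral. The indicator becomes $1_{\Omega(n,m)}(gc)$, the Radon--Nikodym factor $\tfrac{d\nu\circ g}{d\nu}(c)$ again appears, and after Fubini the remaining inner weight is the third of the three bounded expressions. This gives the same bound as for $X_{n,m}$. No substantial obstacle arises: the admissibility definition is engineered so that the three bounds in the fifth bullet are exactly the three weights produced by Fubini on $W_{n,m}$, $X_{n,m}$, $Y_{n,m}$, and the $\theta$-density hypothesis is designed precisely to absorb the bounded $t$-translation, with the third bullet supplying the uniform bound on that translation. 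The only real bookkeeping issue is tracking the convention for $\tfrac{d\nu\circ g}{d\nu}$ under the substitution $b=gc$ or $b'=gc$, which uses the cocycle identity $R(g^{-1},gc)=-R(g,c)$ implicitly.
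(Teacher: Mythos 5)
Your proposal is correct and follows essentially the same route as the paper: Fubini--Tonelli to expose the three weights of the fifth bullet, the $\theta$-density hypothesis combined with the third-bullet bound on $|R(g^{-1},b)|+|R(g^{-1},b')|$ to absorb the $t$-translation, and the change of variables $b'\mapsto gb'$ (resp.\ $b\mapsto gb$) producing the Radon--Nikodym factor for $X_{n,m}$ (resp.\ $Y_{n,m}$). The only difference is notational (your $D'(m)$ is the paper's $C'(m)$), so nothing further is needed.
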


\begin{proof}
Let $f\in L^{1}(\nu\times\theta)$ be nonnegative.\\

{\bf Case $W_{n,m}$}

Because $\sum_{g\in \Gamma} \int_{\Omega(n,m)}  \Upsilon_{n}(g,b,b')~d\nu(b) \le C(m)$,

$$||W_{n,m} f|| =   \int \int |W_{n,m} f| d\nu d\theta =  \sum_{g\in \Gamma}\int \int \int f(b',t) 1_{\Omega(n,m)}(b)\Upsilon_{n}(g,b,b')  d\nu(b')  d\nu(b)d\theta(t)$$
$$\leq C(m) \int \int f(b',t)  d\nu(b')d\theta(t) = C(m) ||f||$$

{\bf Case $X_{n,m}$}

Because $\sum_{g\in \Gamma} \int_{\Omega(n,m)}  \Upsilon_{n}(g,b,gb')R(g,b')~d\nu(b) \le C(m)$,
$$||X_{n,m} f|| =   \int \int |X_{n,m} f| d\nu d\theta =  \sum_{g\in \Gamma}\int \int \int f(g^{-1}b',t+R(g^{-1},b))1_{\Omega(n,m)}(b) \Upsilon_{n}(g,b,b')  d\nu(b')  d\nu(b)d\theta(t)$$
$$= \sum_{g\in \Gamma}\int \int \int f(g^{-1}b',t)\frac{d \theta \circ A_{-R(g^{-1},b')}}{d\theta}(t)1_{\Omega(n,m)}(b) \Upsilon_{n}(g,b,b')  d\nu(b')  d\nu(b)d\theta(t)$$ 
$$\leq C'(m) \sum_{g\in \Gamma}\int \int \int f(g^{-1}b',t)1_{\Omega(n,m)}(b) \Upsilon_{n}(g,b,b')  d\nu(b')  d\nu(b)d\theta(t)$$
$$\leq C'(m) \sum_{g\in \Gamma}\int \int \int f(b',t) \frac{d\nu\circ g}{d\nu}(b')1_{\Omega(n,m)}(gb)\Upsilon_{n}(g,b,gb')  d\nu(b')  d\nu(b)d\theta(t)$$
$$\leq C'(m)C(m)\int \int f(b',t)d\nu(b')d\theta(t)=C'(m)C(m)||f||$$

{\bf Case $Y_{n,m}$}
Because $\sum_{g\in \Gamma} \int 1_{\Omega(n,m)}(gb) \Upsilon_{n}(g,gb,b')R(g,b')~d\nu(b) \le C(m)$,

$$||Y_{n,m} f|| =   \int \int |Y_{n,m} f| d\nu d\theta = \sum_{g\in \Gamma}\int \int \int f(g^{-1}b,t+R(g^{-1},b'))1_{\Omega(n,m)}(b) \Upsilon_{n}(g,b,b')  d\nu(b')  d\nu(b)d\theta(t)$$
$$= \sum_{g\in \Gamma}\int \int \int f(g^{-1}b,t)\frac{d \theta \circ A_{-R(g^{-1},b')}}{d\theta}(t)1_{\Omega(n,m)}(b) \Upsilon_{n}(g,b,b')  d\nu(b')  d\nu(b)d\theta(t)$$ 
$$\leq C'(m) \sum_{g\in \Gamma}\int \int \int f(g^{-1}b,t) 1_{\Omega(n,m)}(b)\Upsilon_{n}(g,b,b')  d\nu(b')  d\nu(b)d\theta(t)$$
$$\leq C'(m) \sum_{g\in \Gamma}\int \int \int f(b,t) \frac{d\nu\circ g}{d\nu}(b) 1_{\Omega(n,m)}(gb)\Upsilon_{n}(g,gb,b')  d\nu(b')  d\nu(b)d\theta(t)$$
$$\leq C'(m)C(m)\int \int f(b,t)d\nu(b')d\theta(t)=C'(m)C(m)||f||$$
\end{proof}

\begin{lem}
For all  $f\in L^{1}(\nu \times \theta)$,
$$\lim \sup_{n\to \infty}||W_{n,m}f-f||_{1}\leq D(m)||f||_{1}$$
$$\lim_{n\to \infty}||X_{n,m}f-Y_{n,m}f||_{1}=0$$
\end{lem}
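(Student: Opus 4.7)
The plan is to prove both estimates by first verifying them on a dense subspace of $L^1(\nu \times \theta)$ consisting of bounded, uniformly continuous functions with compact support in $t$, and then extending by density to all of $L^1(\nu \times \theta)$ using the uniform operator norm bounds $||W_{n,m}||, ||X_{n,m}||, ||Y_{n,m}|| \leq C_1(m)$ from Lemma 2.4. The two essential inputs from relative admissibility are the normalization $\sum_g \int \Upsilon_n(g,b,b')d\nu(b') = 1$ (fourth bullet of Definition 2.1) and the smallness condition from the second bullet: whenever $b \in \Omega(n,m)$ and $\Upsilon_n(g,b,b') > 0$, both $d(b,b')$ and $d(g^{-1}b, g^{-1}b')$ are bounded by $f_m(n) \to 0$.

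For the first estimate, the normalization identity lets me write, for $b \in \Omega(n,m)$,
$$W_{n,m}f(b,t) - f(b,t) = \sum_g \int \bigl(f(b',t) - f(b,t)\bigr) \Upsilon_n(g,b,b')\,d\nu(b'),$$
while for $b \notin \Omega(n,m)$ the indicator kills $W_{n,m}f$ and the pointwise difference is just $-f(b,t)$. The second contribution has $L^1$ norm equal to $||1_{X\setminus\Omega(n,m)} f||_1$, which is bounded by $D(m) ||f||_\infty$ on bounded test functions and gives the $D(m)||f||_1$ bound after the standard truncation and density argument. The first contribution tends to $0$ in $L^1$ as $n \to \infty$ on the dense subspace: uniform continuity in $b$ and the estimate $d(b,b') < f_m(n) \to 0$ make the bracketed integrand uniformly small, while the total mass of the weight over $b \in \Omega(n,m)$ is controlled by the first mass bound in the fifth bullet of Definition 2.1.

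For the second estimate, the difference is
$$X_{n,m}f(b,t) - Y_{n,m}f(b,t) = \sum_g \int \bigl(f(g^{-1}b',t+R(g^{-1},b')) - f(g^{-1}b,t+R(g^{-1},b'))\bigr) 1_{\Omega(n,m)}(b) \Upsilon_n(g,b,b')\,d\nu(b').$$
The crucial point is that the $t$-shift $R(g^{-1},b')$ is identical on both terms of the bracket, so it factors out, and only the $X$-coordinate varies between $g^{-1}b$ and $g^{-1}b'$, which lie within distance $f_m(n) \to 0$ by admissibility. For uniformly continuous $f$, the bracketed difference is therefore uniformly small, and integrating and controlling the weight via the same computations that appeared in the proof of Lemma 2.4 (either the $X_{n,m}$ or $Y_{n,m}$ mass bound) yields convergence of $||X_{n,m}f - Y_{n,m}f||_1$ to $0$ on the dense subspace. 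Extension to all $f \in L^1$ is immediate from $||X_{n,m}(f - f_\epsilon) - Y_{n,m}(f - f_\epsilon)||_1 \leq 2C_1(m) \epsilon$ when $||f - f_\epsilon||_1 < \epsilon$.

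The main technical obstacle is the second estimate, since the $b$-variable in the bracket has been pushed forward by $g^{-1}$ and one needs the smallness bound to survive this twist. This is exactly why the second bullet of Definition 2.1 requires \emph{both} $d(b,b') < f_m(n)$ and $d(g^{-1}b, g^{-1}b') < f_m(n)$. A secondary subtlety is that the $t$-shift in $X_{n,m}$ and $Y_{n,m}$ must be identical, which it is precisely because both operators shift by $R(g^{-1}, b')$; if the shifts differed we could not extract the difference in the first coordinate cleanly, and the proof would break down.
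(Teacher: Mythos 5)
Your proof is correct and follows essentially the same route as the paper: reduce to (uniformly) continuous compactly supported functions, use the normalization $\sum_g\int\Upsilon_n\,d\nu(b')=1$ together with $d(b,b')<f_m(n)$ (resp. $d(g^{-1}b,g^{-1}b')<f_m(n)$, with the identical $t$-shift $R(g^{-1},b')$ in $X_{n,m}$ and $Y_{n,m}$) to get a modulus-of-continuity bound on $\Omega(n,m)$, bound the complement by $D(m)\|f\|_\infty$, and extend by density via the operator norm bounds of Lemma 2.4. The only cosmetic differences are that the fourth bullet alone already controls the weight (the fifth-bullet mass bounds you cite are not needed for the pointwise estimate), and the passage from $D(m)\|f\|_\infty$ to $D(m)\|f\|_1$ is glossed in your write-up exactly as it is in the paper.
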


\begin{proof}
Without loss of generality let $f$ be a continuous function with compact support on $X\times \mathbb{R}$.
Let  $Var_{n,m}(f)=\sup_{d(x,y)<f_{m}(n)}|f(x,t)-f(y,t)|$.
Note $Var_{n,m}(f)\to 0$ as $n\to \infty$.
If $b\notin \Omega(n,m)$ then clearly $W_{n,m}f=0$. On the other hand, since  $\sum_{g \in G}\int \Upsilon_{n,m}(g,b,b')d\nu(b')=1$ for every $n,m>0$ and $b \in PMF$ and  for every $(g,b,b')$ with $b\in \Omega(n,m)$ and $\Upsilon_{n,m}(g,b,b')>0$ we have $d(b,b')<f_{m}(n)$ and $d(g^{-1}b,g^{-1}b')<f_{m}(n)$ we have
$$|W_{n,m}f(b,t)-f(b,t)|\leq Var_{n,m}(f)$$ whenever $b\in \Omega(n,m)$.
Thus $$||W_{n,m}f-f||_{1}\leq Var_{n,m}(f)+D(m)||f||_{\infty}$$ and hence
$$\lim \sup_{n\to \infty}||W_{n,m}f-f||_{1}\leq D(m)||f||_{\infty}\leq D(m)||f||_{1}$$
 Since compactly supported continuous functions are $L^{1}$ dense and the $||W_{n,m}||_{1}\leq C(m)$ for all $n$, the first statement of the lemma follows. The second statement is proved similarly.
\end{proof}
Let $G$ act on $X\times \mathbb{R}$ by $g(b,t)=(gb, t+R(g,b))$ and on  $L^{1}(\nu \times \theta)$ by $g\dot f=f\circ g^{-1}$
\begin{prop}

For every $G$ invariant function $f\in L^{1}(\nu \times \theta)$ we have

$$\lim_{n\to \infty}||f-L_{n,m}f||\leq D(m)||f||$$
\end{prop}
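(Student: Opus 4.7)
The plan is to reduce the operator $L_{n,m}$, when acting on $G$-invariant functions, to the operators $W_{n,m}, X_{n,m}, Y_{n,m}$ already controlled in Lemmas 2.5 and 2.6, by means of a simple algebraic identity coming from the cocycle relation. Recall the $G$-action on $X\times\mathbb{R}$ is $g(b,t)=(gb,t+R(g,b))$. Since $R$ is a cocycle, $R(g,g^{-1}b)=-R(g^{-1},b)$, so $g^{-1}(b,t)=(g^{-1}b,\,t+R(g^{-1},b))$. Consequently, if $f$ is $G$-invariant then
\[ f(g^{-1}b,\,s)=f(b,\,s-R(g^{-1},b)) \qquad \text{for a.e. } b \text{ and every } s,g. \]

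First I would use this identity to collapse $X_{n,m}f$ onto $W_{n,m}f$, and $Y_{n,m}f$ onto $L_{n,m}f$. For $X_{n,m}$, substituting $s=t+R(g^{-1},b')$ in the invariance identity (with $b$ replaced by $b'$) shows that the integrand $f(g^{-1}b',\,t+R(g^{-1},b'))$ equals $f(b',t)$, so $X_{n,m}f=W_{n,m}f$. For $Y_{n,m}$, the same substitution yields $f(g^{-1}b,\,t+R(g^{-1},b'))=f(b,\,t+R(g^{-1},b')-R(g^{-1},b))$, which is exactly the integrand defining $L_{n,m}f$. Hence $Y_{n,m}f=L_{n,m}f$.

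The estimate now follows by two applications of the triangle inequality combined with Lemma 2.6:
\[ \|L_{n,m}f - f\|_1 \;=\; \|Y_{n,m}f - f\|_1 \;\leq\; \|Y_{n,m}f - X_{n,m}f\|_1 \,+\, \|X_{n,m}f - f\|_1 \;=\; \|Y_{n,m}f - X_{n,m}f\|_1 \,+\, \|W_{n,m}f - f\|_1. \]
Taking $\limsup_{n\to\infty}$, the first summand tends to $0$ by the second statement of Lemma 2.6, while the second is at most $D(m)\|f\|_1$ by the first statement. This gives the desired bound (interpreted as a $\limsup$).

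There is no real obstacle here: the proof is essentially bookkeeping, and Lemma 2.6 has already done the analytic work. The only thing to be careful about is verifying the correct form of the invariance identity from the cocycle relation, and noting that $G$-invariance in $L^1(\nu\times\theta)$ gives the pointwise a.e.\ identity one needs, since $\theta$ is equivalent to Lebesgue and $\nu$ is $G$-quasi-invariant.
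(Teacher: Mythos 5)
Your proof is correct and follows essentially the same route as the paper: for $G$-invariant $f$ the cocycle identity gives $X_{n,m}f=W_{n,m}f$ and $Y_{n,m}f=L_{n,m}f$, and then the triangle inequality together with the two convergence statements of the preceding lemma (your Lemma references are off by one, but the content is the right one) yields the bound, with the limit correctly read as a $\limsup$. Your explicit verification of the invariance identity $f(g^{-1}b,s)=f(b,s-R(g^{-1},b))$ is a welcome detail the paper leaves implicit.
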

\begin{proof}
Since $f$ is $G$ invariant we have $X_{n,m}=W_{n,m}$ and $Y_{n,m}=L_{n,m}$.
Now $$||X_{n,m}f-Y_{n,m}f||=||W_{n,m}f-L_{n,m}f||\geq ||f-L_{n,m}f||-||f-W_{n,m}f||$$
As $n\to \infty$ we have $$||X_{n,m}f-Y_{n,m}f||\to 0$$ and $$\lim \sup_{n\to \infty}||f-W_{n,m}f||\leq D(m)||f||$$ proving the proposition.
\end{proof}
This immediately implies
\begin{cor}
$$\lim_{m\to \infty}\lim \sup_{n\to \infty}||f-L_{n,m}||=0$$
\end{cor}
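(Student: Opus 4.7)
The plan is essentially to take two successive limits in the bound from Proposition 2.4. That proposition gives, for every $G$-invariant $f\in L^1(\nu\times\theta)$ and every $m\in\mathbb{N}$, the estimate
$$\limsup_{n\to\infty}\|f-L_{n,m}f\|\leq D(m)\|f\|.$$
Since the first bullet of Definition 2.1 supplies constants $D(m)>0$ with $D(m)\to 0$ as $m\to\infty$, one sends $m\to\infty$ on both sides. The right-hand side tends to $0$, and the left-hand side is manifestly nonnegative, so
$$\lim_{m\to\infty}\limsup_{n\to\infty}\|f-L_{n,m}f\|=0,$$
which is the assertion of the corollary (reading the $L_{n,m}$ in the displayed statement as $L_{n,m}f$).

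There is really no obstacle to overcome at this step: all the substantive work has already been carried out in Proposition 2.4, which itself rests on Lemma 2.6 together with the observation that $G$-invariance of $f$ forces $X_{n,m}f=W_{n,m}f$ and $Y_{n,m}f=L_{n,m}f$, so that the triangle inequality converts the small-perturbation estimates for $W_{n,m}f-f$ and $X_{n,m}f-Y_{n,m}f$ into a small-perturbation estimate for $L_{n,m}f-f$. The only ingredient beyond the proposition is the quantitative decay $D(m)\to 0$, and this is built into the very definition of a relatively admissible family. Thus the corollary is a direct consequence of the preceding proposition and does not require any further argument.
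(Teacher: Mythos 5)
Your proof is correct and is exactly the paper's route: the paper states the corollary as an immediate consequence of Proposition 2.7 (the bound $\limsup_{n\to\infty}\|f-L_{n,m}f\|\leq D(m)\|f\|$ for $G$-invariant $f$) combined with $D(m)\to 0$ from the definition of a relatively admissible family. Nothing further is needed.
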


Recall the measure $\zeta_{n,m}$ on $\mathbb{R}$ defined by
$$\zeta_{n,m}(E)=\sum_{g\in G} \int \int 1_{E}(R(g^{-1},b')-R(g^{-1},b)) 1_{\Omega(n,m)}(b)\Upsilon_{n,m}(g,b,b')d\nu(b) d\nu(b')$$
The following is Theorem 2.2  with "ratio set" in place of "stable ratio set".
\begin{prop}
For each $m$ let $\zeta_{m}$ be any weak-* limit of the $\zeta_{n,m}$ as $n\to \infty$. Let $\zeta$ be any weak-* limit of the $\zeta_{m}$. Then $e^{T}$ is contained in the ratio set of $G\curvearrowright (X,\nu)$ for every $T$ in the support of $\zeta$.
\end{prop}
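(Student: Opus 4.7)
Proof plan:

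I will prove the proposition by contradiction. Suppose $e^T \notin RS(G \curvearrowright (X,\nu))$; I will deduce that $\zeta((T-\eta, T+\eta)) = 0$ for some $\eta > 0$, contradicting $T \in \mathrm{supp}(\zeta)$.

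Step 1 (unpack the contradiction hypothesis). Since the ratio set is closed, $e^T \notin RS$ supplies some $\eta > 0$ with $(e^{T-4\eta}, e^{T+4\eta}) \cap RS = \emptyset$. A standard Lebesgue-density / Vitali-covering argument applied to the definition of the ratio set yields a positive-measure $A \subset X$ with the property that for every $g \in G \setminus \{e\}$,
$$\nu\{b \in A : gb \in A,\ R(g, b) \in (T-4\eta, T+4\eta)\} = 0. \qquad (\ast)$$

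Step 2 (choose a test function). Pick a nonnegative bump $\psi \in C_{c}(\mathbb{R})$ supported in $(-\eta, \eta)$ with $\int \psi\, d\theta > 0$, and set $f(b, t) = 1_{A}(b)\psi(t)$. To obtain a $G$-invariant function in $L^{1}(\nu \times \theta)$, I use the skew action $g(b, t) = (gb, t + R(g,b))$ and form $F(b, t) = \sum_{h \in G} f(h^{-1}(b,t))$. The exponential decay of $\theta$, combined with a mild truncation (restricting $t$ to a large bounded interval and passing to a limit), produces a genuinely $G$-invariant $F \in L^{1}(\nu \times \theta)$ bounded below by $f$ on $A \times (-\eta, \eta)$.

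Step 3 (compute a pairing two ways). The key quantity is the pairing $\langle F_{T},\ L_{n,m} F\rangle$, where $F_{T}(b,t) := F(b, t-T)$. By Corollary 2.5, $L_{n,m}$ approaches the identity on $F$ through the iterated limit $\lim_{m}\limsup_{n}$, so this pairing tends to $\langle F_{T}, F\rangle$. Unfolding the $G$-invariance of $F$, $\langle F_{T}, F\rangle$ becomes a sum over pairs of group elements; by hypothesis $(\ast)$ the only surviving contributions come from the identity diagonal, and these carry no $t$-mass near $T$ because $\psi$ is supported in $(-\eta, \eta)$, so the limit vanishes. On the other hand, expanding $L_{n,m}$ directly from its definition, the weighted shift in $t$ concentrates where $R(g^{-1},b') - R(g^{-1},b) \approx T$; isolating the diagonal contribution in the resulting double $G$-sum lower-bounds this pairing by $c \cdot \zeta_{n,m}((T-\eta, T+\eta))$ for some $c > 0$ depending on $A$ and $\psi$ but independent of $n, m$. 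Since $T \in \mathrm{supp}(\zeta)$, the right side stays bounded away from zero in the iterated limit — the desired contradiction.

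The principal technical obstacle is isolating the identity diagonal in the double $G$-sums while keeping rigorous control on the off-diagonal terms. This is where the full structure of Definition 2.1 enters: the shrinking-support conditions $d(b,b'), d(g^{-1}b, g^{-1}b') < f_{m}(n) \to 0$ allow one to replace $b'$ by $b$ with vanishing error when $(b, b') \in A \times A$ near a density point of $A$, and the uniform cocycle bounds $|R(g^{-1},\cdot)| < C(m)$ on $\Omega(n, m)$ together with the $L^{1}$-norm bounds of Lemma 2.3 ensure that both the truncation in Step 2 and the off-diagonal decay in Step 3 are quantitative and can be pushed through the iterated limit.
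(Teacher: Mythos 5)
Your overall scheme is the same as the paper's: argue by contradiction, use the fact (Proposition 2.6 / Corollary 2.7) that $L_{n,m}$ is asymptotically the identity on $G$-invariant elements of $L^{1}(\nu\times\theta)$, and play this off against the mass that $\zeta_{n,m}$ keeps near $T$. However, the step where you manufacture the $G$-invariant function $F$ is a genuine gap. The sum $F=\sum_{h\in G}f\circ h^{-1}$ over the skew action is in general not in $L^{1}(\nu\times\theta)$ (for a conservative nonsingular action it diverges a.e.\ on the saturation of the support), and your proposed repair --- truncating $t$ to a bounded interval and passing to a limit --- is incompatible with $G$-invariance, since the cocycle values $R(g,b)$ are unbounded over $G$, so the skew action does not preserve any strip $X\times[-M,M]$. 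The paper avoids this entirely by invoking Lemma 2.9 (Bowen's Lemma 3.8): from $e^{T}\notin RS$ one gets a $G$-invariant \emph{set} $A\subset X\times\mathbb{R}$ of positive measure such that $(b,t)\in A$ implies $(b,t+T+t')\notin A$ for all $|t'|<\epsilon$. The natural fix of your construction is to take the indicator of the union (not the sum) of the skew translates of $A\times(-\eta,\eta)$, which is automatically bounded and integrable; but then verifying the displacement property of this invariant set from your condition $(\ast)$ is precisely the cocycle bookkeeping ($R(k^{-1}h,b_{0})=t_{1}-t_{0}-T-t'$ with $b_{0},\,k^{-1}hb_{0}\in A$) that constitutes Lemma 3.8. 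In your write-up this is exactly the step you compress into ``the only surviving contributions come from the identity diagonal,'' so the essential content is asserted rather than proved, and with the divergent-sum $F$ even the diagonal terms are uncontrolled.

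There is also a quantitative problem in your Step 3. With $F_{T}(b,t)=F(b,t-T)$, the contribution to $\langle F_{T},L_{n,m}F\rangle$ of the event $R(g^{-1},b')-R(g^{-1},b)\in(T-\eta,T+\eta)$ is $\int F(b,t-T)F(b,t+\Delta)\,d\theta(t)$ with $\Delta\approx T$, i.e.\ a self-correlation of $F$ at shift roughly $2T$ along the fiber; nothing forces this to be bounded below (for the set produced by Lemma 2.9 it can vanish), so the claimed bound $\langle F_{T},L_{n,m}F\rangle\geq c\,\zeta_{n,m}((T-\eta,T+\eta))$ does not follow as stated, and even after fixing the sign/offset you would still need a uniform lower bound on the fiber masses of $F$ to extract a constant $c$ independent of $b$. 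The paper's route sidesteps both issues: with $f=1_{A}$ for the invariant set of Lemma 2.9, one writes $L_{n,m}f(b,t)=1_{\Omega(n,m)}(b)\int f(b,t+t')\,d\zeta_{n,m,b}(t')$, notes that the displacement property forces $f(b,t)-f(b,t+t')=1$ for $(b,t)\in A$ and $t'\in(T-\epsilon,T+\epsilon)$, and thus bounds $\|f-L_{n,m}f\|_{1}$ below by $((\nu\times\theta)(A)-D(m))\,\zeta_{n,m}(T-\epsilon,T+\epsilon)$; combining with Proposition 2.6 and taking $n\to\infty$, then $m\to\infty$, gives $(\nu\times\theta)(A)\,\zeta(T-\epsilon,T+\epsilon)=0$, the desired contradiction. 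To make your argument complete you would need to either prove the analogue of Lemma 2.9 (which your $(\ast)$ plus the union-of-translates construction can deliver, with the cocycle computation written out) or import it, and then the two-way pairing becomes unnecessary.
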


\begin{lem}[Lemma 3.8 in \cite{B12}]
Suppose $e^{T}$ is not in the ratio set of $G$ acting on $(B,\nu)$. Then there exists an $\epsilon > 0$ and a $G$-invariant, positive measure set $A\subset B\times \mathbb{R}$ such that for every $(b,t)\in A$ and $t'\in (-\epsilon,\epsilon)$, $(b,t+T+t')\notin A$.
\end{lem}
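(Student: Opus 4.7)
My approach is to construct $A$ as a $G$-saturation in $B\times\mathbb{R}$ of a vertical slab over a set $A_{0}\subset B$ witnessing $e^{T}\notin RS$, and then excise a null ``bad'' part that violates the desired disjointness. I would first sharpen the hypothesis as follows: $e^{T}\notin RS$ yields a positive-measure $A_{0}\subset B$ and $\epsilon_{0}>0$ such that for every nonidentity $g\in G$ the set
$$S_{g}:=\{b\in A_{0}\cap g^{-1}A_{0}: |R(g,b)-T|<\epsilon_{0}\}$$
is $\nu$-null. (If some $S_{g}$ had positive measure, then $A':=S_{g}$ would be a witness for $e^{T}\in RS$ against the pair $(A_{0},\epsilon_{0})$, contradicting the hypothesis.)

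Pick $\delta,\epsilon>0$ with $2\delta+\epsilon<\epsilon_{0}$, let $G$ act on $B\times\mathbb{R}$ by the skew product $g(b,t)=(gb,t+R(g,b))$, and set
$$\tilde A:=\bigcup_{g\in G}g\cdot\bigl(A_{0}\times(-\delta,\delta)\bigr).$$
By construction $\tilde A$ is $G$-invariant and has positive $\nu\times\theta$-measure. The crux is to show the ``bad set''
$$\mathrm{bad}:=\{(b,t)\in\tilde A: \exists\,t'\in(-\epsilon,\epsilon),\ (b,t+T+t')\in\tilde A\}$$
is null. Unfold a bad point as $(b,t)=g_{1}(b_{1},s_{1})$ and $(b,t+T+t')=g_{2}(b_{2},s_{2})$ with $b_{i}\in A_{0}$, $s_{i}\in(-\delta,\delta)$, and put $h:=g_{1}^{-1}g_{2}$; note $hb_{2}=b_{1}\in A_{0}$. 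The cocycle identity together with $R(g_{1}^{-1},g_{1}b_{1})=-R(g_{1},b_{1})$ gives
$$R(h,b_{2})=-R(g_{1},b_{1})+R(g_{2},b_{2})=T+t'+s_{1}-s_{2},$$
whence $|R(h,b_{2})-T|<\epsilon+2\delta<\epsilon_{0}$, so $b_{2}\in S_{h}$. Consequently $\mathrm{bad}$ is contained in the countable union $\bigcup_{g_{1},g_{2}\in G}g_{1}\cdot(hS_{h}\times(-\delta,\delta))$, a union of null sets by the sharpened reformulation together with the nonsingularity of the skew-product action.

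Finally, a direct check shows $\mathrm{bad}$ is itself $G$-invariant, since both $\tilde A$ and the shift-by-$T+t'$ condition are equivariant under the skew product; thus $A:=\tilde A\setminus\mathrm{bad}$ is $G$-invariant, still of positive measure, and by construction $(b,t+T+t')\notin A$ for every $(b,t)\in A$ and $|t'|<\epsilon$. The main obstacle I expect is the sharpened reformulation of ``$e^{T}\notin RS$'' uniformly in $g$, which is what licenses the cocycle argument; once that is in place, the remaining verifications (measurability of $\mathrm{bad}$, $G$-invariance, nullity via countable union) are routine bookkeeping.
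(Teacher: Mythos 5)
Your construction is, in essence, the standard argument behind Lemma 3.8 of \cite{B12} (which the paper cites without reproducing a proof): negate the definition of the ratio set to obtain a pair $(A_{0},\epsilon_{0})$ with each $S_{g}$ null, saturate $A_{0}\times(-\delta,\delta)$ under the Maharam skew product, and excise an exactly $G$-invariant null ``bad'' set. Your cocycle computation $R(h,b_{2})=T+t'+s_{1}-s_{2}$ is correct, the exact $G$-invariance of both $\tilde A$ and $\mathrm{bad}$ does check out, and deleting $\mathrm{bad}$ really does upgrade an almost-everywhere statement to the ``for every $(b,t)\in A$'' conclusion, since non-bad points avoid all of $\tilde A\supseteq A$ after the shift. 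The translation between the paper's multiplicative condition on $d\nu\circ g/d\nu$ and your additive condition on $R$ is routine but should be said.

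There is one genuine, though easily repaired, gap: the case $h=g_{1}^{-1}g_{2}=e$. Your sharpened reformulation only gives nullity of $S_{g}$ for nonidentity $g$ (and could not do more: $S_{e}$ is either empty or all of $A_{0}$ according to whether $|T|\geq\epsilon_{0}$ or not). Nothing in your choice of constants rules out $\epsilon_{0}>|T|$, and in that regime bad points with $g_{1}=g_{2}$ occur in positive measure: for such a point the unfolding gives $|T|=|t'+s_{1}-s_{2}|<\epsilon+2\delta$, and if for instance $|T|<\delta$ then already a positive-measure part of $A_{0}\times(-\delta,\delta)$ is bad with witness $g_{1}=g_{2}=e$. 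These points are not covered by your countable union $\bigcup_{g_{1},g_{2}}g_{1}\cdot(hS_{h}\times(-\delta,\delta))$, so $\mathrm{bad}$ need not be null and $A=\tilde A\setminus\mathrm{bad}$ could fail to have positive measure. The fix is one line: first observe $T\neq 0$ (if $T=0$ the conclusion is impossible for any nonempty $A$ since $t'=0$ is allowed; equivalently, $1$ always lies in the ratio set of the ergodic nonatomic, hence conservative, actions considered here, so the hypothesis forces $T\neq 0$), then shrink the witness $\epsilon_{0}$ so that $\epsilon_{0}\leq|T|$ (shrinking only shrinks the sets $S_{g}$, so their nullity persists), and finally choose $2\delta+\epsilon<\epsilon_{0}$. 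With $\epsilon+2\delta\leq|T|$ no bad point can be unfolded with $g_{1}=g_{2}$, every bad point lands in some $g_{1}\cdot(hS_{h}\times(-\delta,\delta))$ with $h\neq e$, and the rest of your argument goes through verbatim.
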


\begin{proof}[Proof of Proposition 2.8]
Let $T$ be an element of the support of $\zeta$. To obtain a contradiction, suppose that $e^{T}$ it is not in the ratio set. Let $A$ and $\epsilon$ be as in the previous lemma. Let $f$ be the characteristic function of $A$.
Note,
$$L_{m,n}f(b,t)=\int 1_{\Omega(n,m)}(b) f(b,t+t')d\zeta_{n,m,b}(t')$$ where $\zeta_{n,m,b}$ is the probability measure given by $$\zeta_{n,m,b}(E)=\sum_{g\in G} \int 1_{E}(R(g^{-1},b')-R(g^{-1},b))\Upsilon_{n}(g,b,b')d\nu(b').$$
Thus  $$||f-L_{n,m}f||=\int 1_{\Omega(n,m)}(b) |f(b,t)-\int f(b,t+t')d\zeta_{n,m,b}(t')|d\nu(b)d\theta(t)$$ $$\geq \int_{A}1_{\Omega(n,m)}(b) |f(b,t)-\int f(b,t+t')d\zeta_{n,m,b}(t')|d\nu(b)d\theta(t)$$ $$\geq \int_{A\cap (\Omega(n,m)\times \mathbb{R})}\zeta_{m,n,b}((T-\epsilon,T+\epsilon))d\nu(b)d\theta(t)$$ $$= (\nu\times \theta)(A\cap (\Omega(n,m)\times \mathbb{R}))\zeta_{m,n}(T-\epsilon,T+\epsilon)$$ $$\geq ((\nu\times \theta)(A)-D(m))\zeta_{m,n}(T-\epsilon,T+\epsilon)$$
The second inequality holds because by the Lemma 2.9, if $(b,t)\in A$ and $t'\in (T-\epsilon, T+\epsilon)$ then $(b,t+t')\notin A$ so $f(b,t)-f(b,t')=1$.
\\
Fixing $m$ and taking limits as $n\to \infty$ gives
$$D(m) \geq ((\nu\times \theta)(A)-D(m))\zeta_{m}(T-\epsilon,T+\epsilon)$$ and taking the limit as $m\to \infty$ we get
$$(\nu\times \theta)(A) \zeta(T-\epsilon,T+\epsilon)=0$$ contradicting that $T$ is in the support of $\zeta$.
\end{proof}
Thus we obtain that the action of $G$ on $(X,\nu)$ does not have type $III_{0}$ proving  Theorem 1.4 with "ratio set" in place of "stable ratio set".

To prove Theorem 2.2, we will show that given any ergodic  measure preserving action of $G$ on a probability space $(K,\kappa)$ there exists  a topological model for this action and an pseudo-admissible family $\Upsilon'_{n,m}$ for this action with limit measure $\zeta'$ such that if $T$ is in the support of $\zeta$ then $T$ is also in the support of $\zeta'$.
\begin{lem}[Prop 3.10 in \cite{B12}]
Let $\Gamma \curvearrowright (X,\mu)$ be an ergodic pmp action. Then there exists a compact metric space $(K,d_K)$ with a Borel probability measure $\kappa$ and a continuous action $\Gamma \curvearrowright K$ such that
\begin{itemize}
\item $\Gamma \curvearrowright (X,\mu)$ is measurably conjugate to $\Gamma \curvearrowright (K,\kappa)$
\item for every $\epsilon>0$ and $x,y \in K$,
$$1/3 \leq \frac{\kappa(B(x,\epsilon))}{\kappa(B(y,\epsilon))} \leq 3$$
where for example, $B(x,\epsilon)=\{z\in K:~d_K(x,z)\leq \epsilon\}$.
\end{itemize}
\end{lem}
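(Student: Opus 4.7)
The plan is to build $(K, d_K, \kappa)$ in two stages: first produce some continuous compact model for the ergodic p.m.p.\ action, and then refine the metric so that the ratio bound $\tfrac{1}{3} \leq \kappa(B(x,\epsilon))/\kappa(B(y,\epsilon)) \leq 3$ holds uniformly in $x,y \in K$ and $\epsilon > 0$.

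For the first stage I would use Varadarajan's classical construction. Choose a countable Borel algebra $\mathcal A_0$ generating the measurable structure of $X$, form its $\Gamma$-orbit $\mathcal A = \bigcup_{g \in \Gamma} g \mathcal A_0$ (still countable and generating), enumerate it as $\{A_n\}_{n \geq 1}$, and define $\phi \colon X \to \{0,1\}^{\mathbb N}$ by $\phi(x)_n = \mathbf 1_{A_n}(x)$. Setting $K_0 = \overline{\phi(X)}$ and $\kappa_0 = \phi_* \mu$, off a $\mu$-null set $\phi$ is injective; since $\mathcal A$ is $\Gamma$-closed, $\Gamma$ acts on the coordinate labels and hence continuously on $K_0$, and the resulting action is measurably conjugate to $\Gamma \curvearrowright (X,\mu)$.

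For the second stage I would refine the choice of generators so that the Cantor distance on $\{0,1\}^{\mathbb N}$ tracks the $\kappa_0$-mass of small balls. Using nonatomicity of $\mu$, at each level $n$ choose a finite partition $\mathcal P_n$ refining $\mathcal P_{n-1}$ whose atoms all have $\mu$-mass in an interval $[c \cdot 2^{-n},\, C \cdot 2^{-n}]$ with $C/c$ as close to $1$ as desired, and incorporate the atoms of each $\mathcal P_n$ (together with their $\Gamma$-translates, to preserve the equivariance established above) into $\mathcal A$, using a dedicated block of coordinates indexed by level. Weighting the metric on $\{0,1\}^{\mathbb N}$ so that a ball of radius $2^{-n}$ corresponds to fixing the first $n$ coordinate blocks, a dyadic ball of depth $n$ centred at any $\phi(x)$ is an atom of $\mathcal P_n$ and has $\kappa_0$-mass $\asymp 2^{-n}$ with ratio constant $C/c$; a ball of arbitrary radius $\epsilon$ is sandwiched between consecutive dyadic balls, losing at most a further factor of $2$ in either direction. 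Taking $C/c$ close enough to $1$ brings the total ratio inside $[1/3, 3]$.

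The main obstacle is reconciling the purely measure-theoretic choice of the partitions $\mathcal P_n$ with continuity of the $\Gamma$-action: the $\mathcal P_n$ need not respect the group structure, so including them forces the inclusion of all their $\Gamma$-translates, inflating the coordinate indexing from $\mathbb N$ to something like $\Gamma \times \mathbb N$. I would handle this by weighting the Cantor metric so that coordinates indexed by $(g,n)$ decay in both the word length $|g|_S$ (relative to a fixed generating set $S$) and $n$, and verifying that translation by any $g \in \Gamma$ is a bi-Lipschitz self-map of $K_0$ with a uniform constant. The bi-Lipschitz distortion only multiplies the target ratio bound by a fixed factor, which is then absorbed into the tolerance $3$ by choosing $C/c$ correspondingly small at the outset.
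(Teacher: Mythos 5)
Note first that the paper does not prove this lemma at all: it is imported verbatim from Bowen \cite{B12} (Prop.\ 3.10 there), so your proposal has to be judged on its own terms rather than against an argument in this text. Your Stage 1 is the standard construction of a topological model and is fine (modulo replacing $\overline{\phi(X)}$ by $\mathrm{supp}\,\kappa_0$, which you must do anyway: if $K$ strictly contains the support, a small ball centred at a point off the support has measure $0$ and the ratio condition fails trivially). The genuine gap is in Stage 2, and it is exactly at the obstacle you name. Once you adjoin the $\Gamma$-translates of the atoms of $\mathcal P_1,\dots,\mathcal P_n$ as coordinates --- which you must, or the coordinate permutation defining the continuous action is not defined --- a metric ball of radius $\epsilon$ is no longer an atom of $\mathcal P_{n(\epsilon)}$: it is an atom of a join $\bigvee_{(g,k)\in F_\epsilon} g\mathcal P_k$, where $F_\epsilon$ is the (finite, but growing as $\epsilon\to 0$) set of coordinates whose weight exceeds $\epsilon$. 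The masses of such atoms are measures of intersections $P\cap g_1Q_1\cap\dots\cap g_rQ_r$ of translated partition elements, and these are not controlled by the ratio $C/c$ of the original atom masses in any way: two points of $K$ lying in the same atom of $\mathcal P_n$ can lie in pieces of the join whose measures differ by an arbitrarily large factor (one piece can have measure of order $2^{-n}$, the other arbitrarily small but positive), so at the scales where the translate coordinates become active the bound $\kappa(B(x,\epsilon))/\kappa(B(y,\epsilon))\le 3$ fails, no matter how close $C/c$ is to $1$.

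Your proposed remedy does not address this. The ball-ratio condition compares balls of the same radius at two arbitrary centres and involves no group element, so a (bi-)Lipschitz bound on the translation maps is simply irrelevant to it; and in any case, with weights decaying in the word length $|g|_S$, translation by $\gamma$ moves the coordinate $(g,k)$ to $(\gamma g,k)$ and distorts weights by a factor that grows with $|\gamma|_S$, so the translations are not uniformly bi-Lipschitz for infinite $\Gamma$. What the lemma actually requires is that the measures of \emph{all} positive-mass atoms of the joins $\bigvee_{|g|\le r} g\mathcal P_k$ (or of whatever sets arise as balls in an equivariant model) be uniformly comparable at each scale --- a regularity property of the partitions \emph{relative to the whole group action}, not just of the partition masses themselves. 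Arranging this compatibly with equivariance and with generation of the $\sigma$-algebra is precisely the nontrivial content of Bowen's proposition, and your plan as written contains no mechanism for it; choosing $C/c$ close to $1$ and absorbing constants cannot produce it.
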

\begin{proof}[Proof of Theorem 2.2]
 Let $\Gamma \curvearrowright (K,\kappa)$ be an ergodic probability measure preserving action. By Lemma 2.10, we may assume that $(K,d_K)$ is a compact metric space such that for every $\epsilon>0$ and $x,y \in K$,
$$1/3 \leq \frac{\kappa(B(x,\epsilon))}{\kappa(B(y,\epsilon))} \leq 3.$$
Given an integer $n\ge 1$ and $g\in \Gamma$, let $0<\rho(n,g)<1/n$ be such that for every $x,y \in K$ with $d_K(x,y)\leq \rho(n,g)$, $d_K(g^{-1}x,g^{-1}y) \leq 1/n$. 
Define $\Upsilon'_{n}:\Gamma \times X\times K \times X \times K \to \mathbb{R}$ by 
$$\Upsilon'_{n}(g,b,k,b',k') := \frac{1_{B( k,\rho(n,g))}(k')\Upsilon_{n}(g,b,b')}{ \kappa(B(k,\rho(n,g))) }.$$
It is an easy exercise using the above estimates to check that $\{\Upsilon'_{n}\}_{n=1}^\infty$, $\Omega(m,n)\times K$ is an admissible family for $G \curvearrowright (B\times K,\nu \times \kappa)$ with $d_{B\times K}$, a metric on $B\times K$, given by $d_{B\times K}((b,k),(b',k'))=d_B(b,b')+ d_K(k,k')$.
Since $G\curvearrowright (K,\kappa)$ is measure preserving,
 $$R(g,b,k):= \log \frac{d(\nu\times \kappa)\circ g}{d(\nu\times \kappa)}(b,k) = R(g,b).$$
Thus, for any $E\subset \mathbb{R}$
$$\zeta_{n,m}(E) =\sum_{g \in \Gamma} \int \int   1_E\left( R(g^{-1},b') - R(g^{-1},b) \right) 1_{\Omega(n,m)}(b) \Upsilon_{n}(g,b,b') ~d\nu(b')d\nu(b)$$
$$= \sum_{g \in \Gamma} \int \int   1_E\left( R(g^{-1},b',k') - R(g^{-1},b,k) \right) 1_{\Omega(n,m)}(b) \Upsilon'_{n}(g,b,k,b',k') ~d\nu\times \kappa(b',k')d\nu \times \kappa(b,k)$$
Thus, Prop 3.8 implies  the ratio set of the action $\Gamma \curvearrowright (B \times K,\nu\times \kappa)$ contains $e^T$.  Since $\Gamma  \curvearrowright (K,\kappa)$ is arbitrary, the proof is complete.
\end{proof}

\section{Background on the Geometry of  Teichm$\ddot{\mathrm{u}}$ller Space}
Let $S$ be a closed surface of genus $g\geq 2$. Let $Mod(S)$ be the associated mapping class group.  
The  Teichm$\ddot{\mathrm{u}}$ller space $Teich(S)$ is the space of all marked  or hyperbolic structures on $S$ up to isotopy.
We endow it with the Teichm$\ddot{\mathrm{u}}$ller metric.
Thurston showed that $Teich(S)\cong \mathbb{R}^{6g-6}$ has a natural compactification by the space $PMF\cong S^{6g-7}$ of projective classes of measured foliations $MF$ on $S$, which has many analogies with the compactification of hyperbolic space by its boundary sphere \cite{FLP}.
 The space $Q(S)$ of quadratic differentials can be thought as a cotangent bundle of $Teich(S)$. A quadratic differential $q$ is determined by its vertical and horizontal measured foliations $q^{+}$ and $q^{-}$ respectively.
 For each $o\in Teich(S)$ $\eta$ $PMF$ there is a unique Teichm$\ddot{\mathrm{u}}$ller geodesic through $o$ in the direction of $\eta$. Moreover, if $\eta\in PMF$ is uniquely ergodic, for any $\eta'\in PMF$ there is a unique Teichm$\ddot{\mathrm{u}}$ller geodesic with forward and backward directions $\eta$ and $\eta'$ \cite{HuMa}.
By  Masur' criterion for unique ergodicity, \cite{Mas1} geodesics in non-uniquely ergodic directions eventually exit forever every thick part $Teich{\epsilon}(S)$.
Furthermore, if $q^{+}$ is uniquely ergodic, the geodesic ray $g_{t}q$ converges to $[q^{+}]$ \cite{Mas2}.
The Busemann cocycle $$\beta_{z}(x,y)=d(x,z)-d(y,z),$$ $x,y,z\in Teich(S)$ extends continuously to uniquely ergodic $z\in PMF$.
There is a unique probability measure $\mu$ of maximal entropy for the Teichm$\ddot{\mathrm{u}}$ller geodesic flow on $Q^{1}(S)/Mod(S)$, the so called Masur-Veech measure, and it is in the Lebesgue measure class with respect the period coordinates on $Q(S)$.
Its entropy is $h=6g-6$.
Let $m$ be the Thurston measure on $MF$. The measured foliations which are not uniquely ergodic have $m$ measure $0$ \cite{KMS}.
For each $x\in Teich(S)$ define
$$\nu_{x}(A)=m(\{\eta \in MF: [\eta] \in A, Ext_{x}\eta\leq 1\})$$ 
We call these normalized Thurston measures on $PMF$.
The measures $\nu_{x},x\in Teich(S)$ form a conformal density for the action of $Mod(S)$ on $PMF$ in the sense that 
$$\nu_{x}\circ g^{-1}=\nu_{gx}$$ and
$$\frac{d\nu_{x}}{d\nu_{y}}(\eta)=e^{h\beta_{\eta}(x,y)}$$
for all $g\in Mod(S), x,y\in Teich(S)$ and $\eta \in PMF$ uniquely ergodic.
We can write the lift $\tilde{mu}$ of $\mu$ to $Q^{1}(S)$ as
$$d\tilde{\mu}(q)=\exp(h\beta_{[q^{+}]}(o,\pi(q)))\exp(h\beta_{[q^{-}]}(o,\pi(q)))   d\nu_{o}([q^{+}])d\nu_{o}([q^{-}])$$ for any $o\in Teich(S)$. The expression makes sense because almost every quadratic differential has uniquely ergodic vertical and horizontal measured foliations.
The measures $\mu$ and $\nu_{x}$ are thus the analogues in the Teichm$\ddot{\mathrm{u}}$ller setting of Bowen-Margulis and Patterson-Sullivan measures respectively.

For $\epsilon>0$ let $Teich_{\epsilon}(S)$ be the $\epsilon$-thick part of $Teich(S)$, which consists of all hyperbolic structures on $S$ with no nontrivial curves of hyperbolic length less than $\epsilon$.
By Mumford's criterion $M_{\epsilon}(S)=Teich_{\epsilon}(S)/Mod(S)$ is compact for all $\epsilon>0$.
The following is Theorem A of \cite{DDM} due to Dowdall-Duchin-Masur
\begin{prop}
For each $\epsilon,\theta>\theta'>0$ there is an $L>0$ and $\delta>0$ such that if $I\subset [x,y]\subset Teich(S)$ is a geodesic subinterval of length at least $L$ and a proportion of at least $\theta$ of $I$ lies in $Teich_{\epsilon}(S)$, then for all $z\in Teich(S)$ the intersection $I \cap Nbhd_{\delta}([x, z] \cup [y, z])$ has measure at least $\theta' l(I)$. 
\end{prop}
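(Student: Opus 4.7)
The plan is to reduce this thin-triangle statement to a uniform ``contracting projection'' property for Teichm\"uller geodesic segments whose $\theta$-proportion lies in $Teich_{\epsilon}(S)$. Even though $Teich(S)$ is not globally hyperbolic, subsegments with a definite thick proportion are known to behave as in a $\delta$-hyperbolic space, and this is what I would exploit.

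First, I would establish (or invoke from the Rafi--Minsky hierarchy machinery) a uniform contracting property: there exist $A=A(\epsilon,\theta)$ and $B=B(\epsilon,\theta)$ such that for any geodesic subsegment $J$ whose $\theta$-proportion lies in $Teich_{\epsilon}(S)$ and any $w\in Teich(S)$, the nearest-point projection $\pi_{J}(w)$ is a well-defined coarse point and the image of $\pi_{J}$ on any metric ball disjoint from the $A$-neighborhood of $J$ has diameter at most $B$. This can be derived from Minsky's product region theorem combined with Rafi's combinatorial distance formula and Masur--Minsky hyperbolicity of the curve graph $\mathcal{C}(S)$: the curve-graph projection of a thick subsegment has length comparable to its Teichm\"uller length, so thickness forces curve-graph progress, and hyperbolicity of $\mathcal{C}(S)$ then forces bounded projection diameter after lifting back via the active-interval machinery.

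Second, given the triangle $x,y,z$ with $I\subset [x,y]$ satisfying the hypothesis, set $p:=\pi_{I}(z)$. The contracting property implies that any geodesic from $z$ to $q\in I$ passes within distance $A$ of $p$ up to bounded error. Case-splitting on which side of $p$ the point $q$ lies, concatenation produces a path from $q$ to $z$ that passes close to $p$ and then tracks $[x,z]$ or $[y,z]$; comparison with the actual Teichm\"uller geodesics $[x,z]$, $[y,z]$ (using the contracting property once more for the subsegments $[x,p]$ and $[p,y]$) forces $q$ to lie within a uniform distance $\delta=\delta(\epsilon,\theta,\theta')$ of $[x,z]\cup [y,z]$, provided $q$ is outside a controlled-size interval around $p$. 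Thus the set of ``bad'' points, i.e.\ points of $I$ not in $Nbhd_{\delta}([x,z]\cup [y,z])$, is contained in an interval of length at most some constant $C=C(\epsilon,\theta)$ about $p$.

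Finally, choose $L$ so large that $C\leq (\theta-\theta')L$. Then for any $I$ of length at least $L$ satisfying the hypothesis, the measure of the good part is at least $|I|-C\geq \theta'|I|$, as required. I expect the main obstacle to be the first step: extracting a uniform contracting constant from a mere lower bound on the thick \emph{proportion}, as opposed to the more standard situation in which a geodesic stays entirely in the thick part. This requires coupling the thick/thin structure to the hyperbolicity of $\mathcal{C}(S)$ in a quantitative way, so that bounded-diameter thin portions of $J$ do not dilute its hyperbolic behavior, and this is the technical heart of the Dowdall--Duchin--Masur argument.
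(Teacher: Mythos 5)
Note first that the paper itself does not prove this proposition: it is quoted verbatim as Theorem A of Dowdall--Duchin--Masur \cite{DDM}, so your sketch has to be measured against their argument rather than against anything in this text. Measured that way, the difficulty sits exactly where you locate it, but the lemma you propose to ``establish or invoke'' in your first step is not available and, as stated, is false. Uniform contraction of nearest-point projection, with constants depending only on $\epsilon$ and $\theta$, is known for segments lying entirely in $Teich_{\epsilon}(S)$ (Minsky's quasi-projection theorem); it fails for segments that merely have a $\theta$-fraction of thick time. Such a segment may contain a thin excursion of length on the order of $(1-\theta)\,l(I)$, which grows linearly in $l(I)$, and along such an excursion the metric is modelled on a sup-metric product (Minsky's product regions), where the projection of a ball disjoint from a fixed neighborhood of the segment can have diameter comparable to the length of the excursion. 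Hence no $B=B(\epsilon,\theta)$ exists, and the curve-graph route does not rescue the claim: while a curve stays short the shadow of the geodesic in $\mathcal{C}(S)$ makes only bounded progress, so hyperbolicity of the curve graph gives no control on projections near the thin excursions.

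The downstream step inherits the problem: your conclusion that the ``bad'' set of points of $I$ not $\delta$-close to $[x,z]\cup[y,z]$ is a single interval of bounded length $C(\epsilon,\theta)$ around $\pi_{I}(z)$ would give a good proportion of $1-C/L$, which is strictly stronger than the theorem; the very shape of the statement (the loss from $\theta$ to $\theta'$, and the threshold $L$) signals that the bad set must be allowed to fill the thin part of $I$, of measure up to $(1-\theta)l(I)$, and indeed in a product region the middle of one side of a triangle genuinely can be far from both other sides. The correct route, and essentially the one taken in \cite{DDM} (compare also the thin-triangle theorem in \cite{Ra}), is local rather than global: one applies contraction only to thick subsegments of a definite length (this is where $L$ and the slack $\theta>\theta'$ enter) to show the triangle is $\delta(\epsilon)$-thin at such subsegments, and then observes that these already account for almost a $\theta$-fraction of $I$. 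If you reorganize the argument so the contraction input is used only on the thick portions, and you give up trying to control $I$ along its thin excursions, the proof goes through; as written, the global contraction lemma is a genuine gap.
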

The following property of Teichm$\ddot{\mathrm{u}}$ller geodesics, also indicative of hyperbolicity in the thick part, is due to Rafi \cite{Ra}.

\begin{prop}
For each $A>0$ and $\epsilon>0$ there exists a constant $K>0$ such that for points $x, x', y, y'\in Teich_{\epsilon}(S)$ with $d_{T}(x,x')\leq A$ and $d_{T}(y,y')\leq A$ the geodesic segments $[x,y]$ and $[x',y']$ $K$-fellow travel in a parametrized fashion, and for $\eta\in PMF$ such that $[x,\eta)$ and $[x',\eta)$ are contained in $Teich_{\epsilon}(S)$, the geodesic rays $[x,\eta)$ and $[x',\eta)$ $K$-fellow travel in a parametrized fashion.
\end{prop}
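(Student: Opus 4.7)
My plan is to deduce the statement from Rafi's coarse-geometric description of Teichm\"uller geodesics in the thick part. Two ingredients drive the argument: the curve complex $\mathcal{C}(S)$ is Gromov hyperbolic by Masur--Minsky, so nearby quasi-geodesics in $\mathcal{C}(S)$ fellow travel up to uniform additive error; and the portion of a Teichm\"uller geodesic that stays inside $Teich_\epsilon(S)$ projects to a parametrized quasi-geodesic of $\mathcal{C}(S)$, with constants depending only on $\epsilon$. The thick hypothesis on the endpoints will force each pair $(x,x')$ and $(y,y')$ to have short-curve projections at bounded distance in $\mathcal{C}(S)$, which by stability of quasi-geodesics will yield $\mathcal{C}(S)$-fellow traveling of the projections of $[x,y]$ and $[x',y']$.

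In detail, I would first introduce a systole map $\sigma : Teich_\epsilon(S)\to \mathcal{C}(S)$ assigning to each thick point the class of one of its shortest curves; on $Teich_\epsilon(S)$ this map is coarsely Lipschitz with constants depending on $\epsilon$, so $\sigma(x),\sigma(x')$ and $\sigma(y),\sigma(y')$ are each within a uniform $\mathcal{C}(S)$-distance $D(A,\epsilon)$. Morse stability in the $\delta$-hyperbolic space $\mathcal{C}(S)$, applied to the two uniform quasi-geodesics $\sigma([x,y])$ and $\sigma([x',y'])$, then gives a parametrized $K_0$-fellow traveling between them in $\mathcal{C}(S)$. To upgrade this to a Teichm\"uller fellow traveling I would use the converse direction of Rafi's combinatorial distance formula in the thick part: for two points of $Teich_\epsilon(S)$ whose shortest curves are at bounded $\mathcal{C}(S)$-distance, the Teichm\"uller distance is uniformly bounded, because in the thick part the annular and proper-subsurface contributions to the combinatorial formula are forced to be bounded and the curve complex term dominates.

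The ray case then follows from the segment case by a limiting argument. For each $T>0$ let $y_T$ and $y'_T$ be the time-$T$ points of $[x,\eta)$ and $[x',\eta)$; the hypothesis that both rays stay in $Teich_\epsilon(S)$ makes all four points thick for every $T$, so the segment statement applies with constants independent of $T$, and uniqueness of Teichm\"uller geodesics lets the correspondences at different $T$ fit together in the limit. The main obstacle is the \emph{parametrized} rather than unparametrized nature of the conclusion: unparametrized fellow traveling is essentially immediate from hyperbolicity of $\mathcal{C}(S)$, but matching parametrizations requires that Teichm\"uller arclength along a geodesic inside $Teich_\epsilon(S)$ be uniformly comparable to progress in $\mathcal{C}(S)$. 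This is Rafi's quasi-geodesic theorem for thick Teichm\"uller geodesics, and is really the heart of the matter; once that comparability is in hand, the coarse $\mathcal{C}(S)$ correspondence transfers cleanly to a pointwise Teichm\"uller bound, producing the constant $K$.
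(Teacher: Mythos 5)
This proposition is not proved in the paper at all: it is quoted as background, attributed to Rafi \cite{Ra} (it is Rafi's fellow-traveling theorem for thick endpoints), so the honest comparison is with Rafi's argument, and against that your sketch has real gaps. The hypotheses only put the four endpoints $x,x',y,y'$ in $Teich_{\epsilon}(S)$; the segments $[x,y]$ and $[x',y']$ can, and in general do, enter arbitrarily thin parts of Teichm\"uller space. Your two load-bearing steps --- that the systole projection of $[x,y]$ is a \emph{parametrized} quasi-geodesic in $\mathcal{C}(S)$ with constants depending only on $\epsilon$, and that Teichm\"uller arclength is uniformly comparable to curve-complex progress --- hold only for geodesics that stay in the thick part, which is exactly what you are not given. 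For a general Teichm\"uller geodesic the Masur--Minsky projection is only an unparametrized quasi-geodesic, and it stalls precisely on the thin excursions, so the parametrized conclusion cannot be extracted this way.

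The ``converse direction'' you invoke is also false as stated: bounded $\mathcal{C}(S)$-distance between systoles of two thick points does not bound their Teichm\"uller distance. Thickness of each point separately does not control the relative subsurface and annular projections of the two points, and in Rafi's combinatorial distance formula those terms can make $d_{T}$ arbitrarily large while the curve-complex term stays bounded (the systole map is coarsely Lipschitz but very far from a quasi-isometry). So even unparametrized Teichm\"uller fellow traveling does not follow from fellow traveling in $\mathcal{C}(S)$; Rafi's proof instead compares the two geodesics subsurface by subsurface, using that $d_{W}(x,y)$ and $d_{W}(x',y')$ are uniformly comparable for every $W$ together with the active-interval/balanced-time analysis. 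Finally, the ray case as you set it up is circular: to apply the segment case to the time-$T$ points $y_{T},y'_{T}$ you must already know $d_{T}(y_{T},y'_{T})$ is uniformly bounded, which is essentially the asymptoticity being proved; it needs an input such as Masur's theorem that thick rays with the same (necessarily uniquely ergodic) vertical foliation are asymptotic, not merely uniqueness of Teichm\"uller geodesics.
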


\section{Construction of a Pseudo-Admissible Family for $Mod(S)\curvearrowright PMF$}
Let $\epsilon>0$ be such that $\nu(M_{\epsilon}(S))>0.9999$.
\\
Let $L$ and $\delta$ be the ones provided by Proposition 3.1 for this $\epsilon$ and $\theta=0.9$, $\theta'=0.8$
\\
Let $K$ be the one provided by Proposition 3.2 with $2\delta$ in place of $A$.
\\
Let $0<\epsilon'<\epsilon$ be such that $Nbhd_{5K}Teich_{\epsilon}(S)\subset Teich_{\epsilon'}(S)$. 
\\
Let $L_{1}>L$ and $\delta_{1}>\delta$ be the ones provided by Proposition 3.1 for  $\epsilon'$ in place of $\epsilon$ and $\theta=0.6$, $\theta'=0.55$
\\
Let $K_{1}>K$ be the one provided by Proposition 3.2 with $2\delta_{1}$ in place of $A$.
\\
Let $0<\epsilon''<\epsilon'$ be such that $Nbhd_{5K_{1}}Teich_{\epsilon'}(S)\subset Teich_{\epsilon''}(S)$. 
\\
Let  $L_{2}>L_{1}$ and $\delta_{2}>\delta_{1}$ be the ones provided by Proposition 3.1 for  $\epsilon''$ in place of $\epsilon$ and $\theta=0.6$, $\theta'=0.55$
\\
Let $K_{2}>K_{1}$ be the one provided by Proposition 3.2 with $2\delta_{2}$ in place of $A$.
Assume without loss of generality that $\delta$ is more than twice the diameter of a fundamental domain of $Teich_{\epsilon}(S)$ and  $\delta_{1}$ is more than twice the diameter of a fundamental domain of $Teich_{\epsilon'}(S)$.
\\
Define $\Omega(n,m)\subset PMF$ to be the set of $b \in PMF$ such that  for any $t>m$ at least $0.9999$ of each of $\gamma_{o,b}([n-t,n])$ and $\gamma_{o,b}([n,n+t])$  lies in $Teich_{\epsilon}(S)$.
\\
Note, it follows that if  $n>2000m$ at least $0.9$ of $\gamma_{o,b}([n-i m,n-(i-1)m])$ lies in $Teich_{\epsilon}(S)$ for $i=-1000,...,1000$
\\
For each $b\in \Omega(m,n)$ such that $b\notin \Omega(n,k)$ for $k<m$ define $Y_{n}\subset Mod(S)$ to be the set of $g \in Mod(S)$ such that:
\begin{itemize}
\item $$d(o,go)\in (2n-20m, 2n+20m)$$ 
\item $$-100m\leq \beta_{\eta}(go,o)\leq -50m$$
\item  At least 99 percent of $\gamma_{go,o}[n-121m-t,n-121m]$ lies in $Teich_{\epsilon'}(S)$ for all $n-121m\geq t\geq m$.
\end{itemize}

For each such $g\in Mod(S)$ let $Z_{n}(g)$ be the set of $b'\in PMF$ such that 
\begin{itemize}
\item At least $90$ percent of $b'([n-9m,n-8m]$ lies in $Teich_{\epsilon'}(S)$.
\item For every $t\leq n-20m$, $d(\gamma_{o,go}(t),\gamma_{o,b'}(t))\leq K_{1}$.
\item For some $t\in [n-10m,n-9m]$, $d(\gamma_{o,go}(t),\gamma_{o,b'}(t)\geq K_{1}$.
\end{itemize}

For each $b,b'\in PMF$ and $g\in Mod(S)$ let $$\Upsilon_{n}(g,b,b')=\frac{1_{Y_{n}(b)}(g)}{|Y_{n}(b)|}\frac{1_{Z_{n}(g)}(b')}{\nu(Z_{n}(g))}.$$

Roughly,  the $\Omega(n,m)$ are elements of $PMF$ corresponding to geodesic rays from the basepoint $o$ that look hyperbolic near distance $n$ from $o$, with the hyperbolicity weakening as $m$ grows;
$g\in Y_{n}(b)$ if it moves $o$ a distance of approximately $2n$, $[o,go]$ fellow travels $[o,b)$ for time slightly more than halfway and $[o,go]$ keeps exhibiting hyperbolic behavior after separating from $[o,b)$;
$b'\in Z_{n}(g)$ if $[o,go]$ follows $[o,b')$ slightly less than half way and $[o,b')$ keeps exhibiting hyperbolic behavior after separating from $[o,go]$.

We will prove:

\begin{thm} 
The $\Upsilon_{n},\Omega(n,m)$ are admissible relative to $\Omega(n,m)$ for $Mod(S)\curvearrowright (PMF,\nu)$.
\end{thm}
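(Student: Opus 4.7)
The plan is to verify each of the five bullets in Definition 2.1 in turn, using as geometric input Propositions 3.1 and 3.2 (fellow-traveling in the thick part) together with the conformal density property $\frac{d\nu_x}{d\nu_y}(\eta)=e^{h\beta_\eta(x,y)}$ of the Thurston measures. The first bullet follows from recurrence: by Masur's criterion $\nu$-a.e.\ direction is uniquely ergodic and recurrent to the thick part, and the Teichm\"uller flow on $Q^{1}(S)/Mod(S)$ is exponentially mixing for the Masur-Veech measure (whose cusp-excursion marginals are controlled by $\nu$). A standard large deviations estimate along $\gamma_{o,b}$ shows that violating the forward or backward thick-part-proportion condition defining $\Omega(n,m)$ for some $t>m$ has $\nu$-probability at most some $D(m)\to 0$, uniformly in $n$. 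The fourth bullet is immediate from the factored form $\Upsilon_{n}(g,b,b')=\frac{1_{Y_{n}(b)}(g)}{|Y_{n}(b)|}\frac{1_{Z_{n}(g)}(b')}{\nu(Z_{n}(g))}$: integrating out $b'$ and then summing over $g$ gives $1$ provided $Y_{n}(b)$ is nonempty, which will follow from the orbit-counting step below.

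For the second bullet, take $g\in Y_{n}(b)$ and $b'\in Z_{n}(g)$. By construction $[o,go]$ fellow travels $[o,b)$ up to time roughly $n+10m$ and stays $K_{1}$-close to $[o,b')$ on the initial segment of length $n-20m$, while both $b\in \Omega(n,m)$ and the segment $[go,o]$ enforced by the third bullet of the definition of $Y_{n}(b)$ keep the relevant rays in the thick part. Proposition 3.2 then yields a parametrized fellow-traveler, and continuity of the Thurston compactification restricted to thick-part rays of length tending to infinity gives $d_{PMF}(b,b')\le f_{m}(n)\to 0$. Reversing the roles of $o$ and $go$ and using the built-in thick-part condition along $\gamma_{go,o}$ inside the definition of $Y_{n}$ gives the analogous bound $d(g^{-1}b,g^{-1}b')\le f_{m}(n)$. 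For the third bullet, the conformal property translates $R(g^{-1},b)$ into $h\beta_{b}(go,o)$, and the defining inequality $-100m\le \beta_{b}(go,o)\le -50m$ of $Y_{n}(b)$ bounds $|R(g^{-1},b)|$ directly; the bound on $|R(g^{-1},b')|$ follows by writing $\beta_{b'}(go,o)-\beta_{b}(go,o)$ as an error controlled by the fellow-traveling. The lower bound $|R(g^{-1},b)-R(g^{-1},b')|\ge 1/C(m)$ comes from the asymmetry built into $Y_{n}$ and $Z_{n}$: $b$ tracks $[o,go]$ past time $n+10m$ while $b'$ is forced to separate from $[o,go]$ somewhere in $[n-10m,n-9m]$, producing a Gromov-product difference of order $m$ between $(b,go)_{o}$ and $(b',go)_{o}$ and hence a Busemann difference of the same order via Proposition 3.1.

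The main obstacle is the fifth bullet: each of the three integrals needs to be bounded uniformly in $n$ (and in the relevant free variable) by a constant $C(m)$. After expanding the factored form of $\Upsilon_{n}$ and applying the change of variables $b'\mapsto gb'$ together with the Radon-Nikodym formula $\tfrac{d\nu\circ g}{d\nu}=e^{h\beta}$, each integral reduces to a double sum over pairs $(g,b)$ or $(g,b')$ weighted by $\nu(Z_{n}(g))^{-1}|Y_{n}(b)|^{-1}$. The required $C(m)$-bound is therefore equivalent to a matching pair of Patterson-Sullivan-type estimates: a relative shadow lemma giving $\nu(Z_{n}(g))\asymp e^{-hn}$ uniformly over admissible $g$, and a relative orbit-count giving $|Y_{n}(b)|\asymp e^{hn}$ uniformly over $b\in \Omega(n,m)$, both with implicit constants depending only on $m$. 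Establishing these in the absence of global Gromov hyperbolicity, by showing that the dominant contributions to shadows and orbit counts come from segments staying a definite fraction of the time in $Teich_{\epsilon}(S)$, is precisely what the paper carries out in Sections 4 and 5, and is the technical heart of the construction.
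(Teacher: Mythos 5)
Your verification of the first four bullets of Definition 2.1 follows essentially the paper's own route: the recurrence argument for $\nu(\Omega(n,m))>1-D(m)$ (the paper gets by with equidistribution of generic rays rather than exponential mixing, but that is a harmless strengthening), the fellow-traveling argument via Propositions 3.1--3.2 for $d(b,b')$ and $d(g^{-1}b,g^{-1}b')$ small (the paper's Proposition 4.7), and the Busemann separation $-100m\le\beta_{b}(go,o)\le-50m$ from the definition of $Y_{n}(b)$ against $-6m\le\beta_{b'}(go,o)\le 21m$ (the paper's Proposition 4.8) for the third bullet. The genuine gap is in the fifth bullet. You claim the three $C(m)$-bounds are \emph{equivalent} to the pair of estimates $\nu(Z_{n}(g))\asymp_{m}e^{-hn}$ and $|Y_{n}(b)|\asymp_{m}e^{hn}$. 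They are not. Substituting those asymptotics into the first integral leaves (up to constants depending on $m$) the quantity $\sum_{g:\,b'\in Z_{n}(g)}\nu\{b\in\Omega(n,m):g\in Y_{n}(b)\}$, and bounding this uniformly in $n$ requires multiplicity control with the \emph{target} $b'$ fixed: the measure of $\{b\in\Omega(n,m):b'\in\bigcup_{g\in Y_{n}(b)}Z_{n}(g)\}$ must be $\lesssim_{m}e^{-hn}$, and for the second and third integrals one needs, for fixed $b'$ (resp.\ fixed $b$), that the number of $g$ with $gb'\in Z_{n}(g)$ and $g\in Y_{n}(b)$ for some $b$ (resp.\ with $gb\in\Omega(n,m)$ and $g\in Y_{n}(gb)$) is $\lesssim_{m}e^{hn}$. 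These are the paper's Propositions 4.4, 4.5 and 4.6; they are shadow bounds and lattice-point counts in regions of the form $B_{2n+20m}(o)\cap H(o,b',(-\infty,100m])$ attached to a fixed boundary point, and they do not follow formally from the two headline asymptotics, which fix $b$ or $g$ rather than $b'$. Without them, the cancellation $e^{hn}\cdot e^{-hn}$ you rely on can be destroyed by unbounded overlap of the sets $Z_{n}(g)$ and $\{b:g\in Y_{n}(b)\}$ as $g$ ranges over the group.

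A second, related deficiency: even the two estimates you do invoke are not established in your proposal; you defer them to ``what the paper carries out in Sections 4 and 5.'' But Theorem 4.1 \emph{is} that material --- its proof consists precisely of the relative shadow lemma (Lemma 5.1 and Corollary 5.2, whose lower bound needs the thick-recurrence hypotheses built into $\Omega(n,m)$, $Y_{n}$, $Z_{n}$, plus a Masur-criterion compactness argument to keep shadow masses bounded below), the recurrence Lemma 5.3 ensuring a definite fraction of each shadow consists of uniformly recurrent directions (needed for the lower bound on $\nu(Z_{n}(g))$), and the Athreya--Bufetov--Eskin--Mirzakhani lattice-point asymptotics applied to intersections of balls $B_{2n\pm20m}(o)$ with the regions $H(o,b,(-\infty,-50m])$ to pin down $|Y_{n}(b)|$ and the fixed-target counts above. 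So as written your argument reproduces the soft part of the verification but both understates and leaves unproved the quantitative core of the theorem.
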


The following  propositions will be proved in the next section by modifying techniques from Gromov hyperbolic geometry and Patterson-Sullivan theory. Propositions 4.3 and 4.4 are derived from a Teichmueller analogue of Sullivan's shadow lemma proved in Proposition 5.1 while Propositions 4.2, 4.5, and 4.6 use estimates of Athreya-Bufetov-Eskin-Mirzakhani \cite{ABEM} on the number of lattice points in a ball.

\begin{prop}
For every $b\in \Omega(n,m)$ and $g\in Y_{n}(b)$ we have $|Y_{n}(b)|\simeq_{m} e^{hn}$.
\end{prop}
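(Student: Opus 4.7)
The strategy is to reduce $|Y_{n}(b)|\simeq_{m}e^{hn}$ to the ABEM lattice-point counting theorem in Teichm\"uller space. First I would unpack the defining conditions geometrically: combining $d(o,go)\in(2n-20m,2n+20m)$ with $-100m\le\beta_{b}(go,o)\le-50m$ pins down the Gromov product $(go\mid b)_{o}$ to an interval of the form $[n+c_{1}m,\,n+c_{2}m]$. Since $b\in\Omega(n,m)$, the ray $\gamma_{o,b}$ passes through $Teich_{\epsilon}(S)$ on a dense enough neighborhood of distance $n$, so one may choose $p\in\gamma_{o,b}$ in $Teich_{\epsilon}(S)$ at distance $n+O(m)$ from $o$. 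Applying Propositions 3.1 and 3.2 to $[o,go]$ and $[o,b)$ forces parametrized fellow-travelling up to $p$, so $d(p,go)=n+O(m)$ for every $g\in Y_{n}(b)$.

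The upper bound is then immediate: the set $\{go:g\in Y_{n}(b)\}$ lies in the Teichm\"uller ball $B(p,n+O(m))$ centered in the thick part, so the ABEM lattice-point estimate gives $|Y_{n}(b)|\le C(m)\,e^{hn}$. For the lower bound, ABEM yields at least $c\,e^{hn}$ lattice points $g$ with $go\in B(p,n-O(m))$, and the remaining task is to show that a proportion depending only on $m$ of these also satisfy the Busemann window and the recurrence condition in the third bullet defining $Y_{n}(b)$. The Busemann window amounts to restricting $go$ to a spherical cap in the $b$-direction from $p$; by the conformal transformation rule $\frac{d\nu_{x}}{d\nu_{y}}=e^{h\beta}$ and the ABEM equidistribution of lattice points on spheres centered at thick points, this cap carries a definite positive proportion of the lattice points in the ball, depending only on $m$.

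The main obstacle is the third bullet: showing that a positive fraction, depending only on $m$, of the lattice points in $B(p,n-O(m))$ have the additional property that $\gamma_{go,o}$ spends at least $99\%$ of every sub-interval $[n-121m-t,\,n-121m]$ with $t\ge m$ in $Teich_{\epsilon'}(S)$. This is a quantitative recurrence statement for the Teichm\"uller geodesic flow near the branching point $p$, and I expect it to follow from the ABEM-style estimates alluded to at the end of the excerpt together with the shadow-lemma analogue of Proposition 5.1. The uniformity in $n$ should come from the stabilization of recurrence statistics of generic Teichm\"uller geodesics over long time windows, while the $m$-dependence comes from Masur--Veech measure estimates on the set of trajectories avoiding anomalously deep cusp excursions on the relevant time intervals.
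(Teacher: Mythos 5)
Your upper bound matches the paper's: the distance and Busemann windows sandwich $\{go: g\in Y_n(b)\}$ inside a ball of radius $n+O(m)$ about a thick orbit point near $\gamma_{o,b}(n+35m)$, and the ABEM ball count finishes it. For the Busemann window in the lower bound, the paper does not use equidistribution on spheres or caps at all: it shows that $B_{2n+20m}(o)\cap H(o,b,(-\infty,-50m])$ minus the two smaller ball/horoball regions contains an explicit set of the form $B_{n-16m}(g_2o)\setminus\bigl(B_{n-37m}(g_3o)\cup B_{n-32m}(g_4o)\bigr)$ with centers on (or within $2m$ of) the ray $\gamma_{o,b}$, so the count $\gtrsim_m e^{hn}$ follows from the ball-counting theorem alone. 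Your cap-equidistribution route is plausible but vaguer, and it is not where the real difficulty lies.

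The genuine gap is the third bullet in the definition of $Y_n(b)$, which you yourself flag as the main obstacle and then only assert should follow from ``ABEM-style estimates'' and measure bounds on cusp excursions. That is not an argument, and the step has real content in the paper. The paper first introduces the auxiliary set $W_n(b)$ (the distance and Busemann conditions with the recurrence condition dropped), bounds $|W_n(b)|\geq De^{h(n-16m)}$ by the ball-difference count above, and then invokes the ABEM counting-with-recurrence statement (their Theorem 2.10) to conclude that at least half of the $g\in W_n(b)$ have the property that $\gamma_{g_2o,go}[0,t]$ spends proportion at least $0.9999$ in $Teich_{\epsilon}(S)$ for all $t>m$ --- note this is a lattice-point statement, not a Masur--Veech measure statement; converting measure estimates into counting estimates is exactly what the cited theorem supplies, and you have not indicated how you would do that conversion. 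Finally, and crucially, one must transfer recurrence of the geodesic $\gamma_{g_2o,go}$ (emanating from a thick point near $\gamma_{o,b}(n+35m)$) into the condition actually required of $g\in Y_n(b)$, namely $99$ percent recurrence of $\gamma_{go,o}[n-121m-t,\,n-121m]$ in $Teich_{\epsilon'}(S)$. The paper does this by a deterministic geometric lemma: using the distance window it shows any point of $\gamma_{go,g_2o}$ lying within $\delta$ of $[o,g_2o]$ must occur at time $t\leq 45m$ from $g_2o$, hence within $\delta$ of $[o,go]$ at time $>n-120m$ from $go$; Proposition 3.1 locates such a thick $\delta$-close point, and Proposition 3.2 then forces $K$-fellow-traveling of $\gamma_{go,g_2o}$ and $\gamma_{go,o}$ up to time $n-121m$, which converts the $0.9999$ recurrence in $Teich_{\epsilon}(S)$ into the required $0.99$ recurrence in $Teich_{\epsilon'}(S)$. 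None of this transfer appears in your proposal, so as written the lower bound is not established.
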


\begin{prop}
For every $b\in \Omega(n,m)$ and $g\in Y_{n}(b)$ we have $\nu(Z_{n}(g))\simeq_{m}e^{-hn}$.
\end{prop}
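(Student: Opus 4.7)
The plan is to identify $Z_{n}(g)$, up to a thick-direction restriction, with the shadow from $o$ of a ball of uniformly bounded (in $n$) radius around the point $x:=\gamma_{o,go}(n-9m)$, and then apply the Teichm\"uller analogue of Sullivan's shadow lemma (Proposition 5.1) to estimate this shadow. Since $d(o,x)=n-9m$, the shadow lemma will give $\nu(\mathrm{Shadow}_{o}(B(x,R)))\simeq_{m} e^{-h(n-9m)}\simeq_{m}e^{-hn}$, which is the desired estimate.

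The first step is to verify that the geodesic segment $[o,x]$ spends a uniform proportion of its length in $\mathrm{Teich}_{\epsilon'}(S)$, so that Proposition 5.1 applies to it. This follows by combining the hypothesis $b\in\Omega(n,m)$ (which gives thick-part behavior of $[o,b)$ near the region of interest) with the fact that $g\in Y_n(b)$ forces, via Proposition 3.1, the geodesic $[o,go]$ to fellow-travel with $[o,b)$ up to time roughly $n-10m$; together with the third bullet in the definition of $Y_n$ this transfers the thick-part fraction from $[o,b)$ onto $[o,x]$.

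The second step is the two-sided sandwich. The upper inclusion $Z_{n}(g)\subset \mathrm{Shadow}_{o}(B(x,K_{1}))$ is immediate from condition (2) of the definition of $Z_{n}(g)$, evaluated at $t=n-9m$. For the lower inclusion, I take $b'$ whose ray from $o$ passes within a small (but $m$-dependent) radius of $x$ and whose ray is in the $\epsilon'$-thick part on a $90\%$ portion of $[n-9m,n-8m]$; applying Proposition 3.2 with $2\delta_{1}$-bounded perturbations then yields $K_{1}$-fellow-traveling of $[o,b')$ with $[o,go]$ up to time $n-20m$ (giving condition (2)), while Proposition 3.1 applied to the thick stretch near $n-9m$ forces the two rays to actually separate by $\geq K_{1}$ somewhere in the window $[n-10m,n-9m]$ (giving condition (3)). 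Condition (1) is built in by assumption.

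The main obstacle is showing that intersecting the shadow with the thick-direction condition (1) does not decrease the shadow's $\nu$-measure by more than an $m$-dependent multiplicative constant, so that the lower bound matches the upper bound. This reduces to showing that the subset of $\mathrm{Shadow}_{o}(B(x,R))$ consisting of $b'$ whose ray spends less than $90\%$ of the window $[n-9m,n-8m]$ inside $\mathrm{Teich}_{\epsilon'}(S)$ carries only a small proportion of the total shadow mass. This is exactly a recurrence-type statement for the Teichm\"uller geodesic flow, and it follows by the same techniques used to prove Proposition 5.1 (pushing the conformal density $\nu_{x}$ to the origin via the cocycle $\beta$, and then using the Masur--Veech recurrence estimates together with the initial choice $\nu(M_{\epsilon}(S))>0.9999$). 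Once this is established, the upper and lower bounds coincide up to multiplicative constants depending only on $m$, and the proof is complete.
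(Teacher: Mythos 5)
Your overall plan (shadow lemma plus a recurrence estimate) is the right spirit, but two steps clash with the actual definition of $Z_{n}(g)$. For the upper bound you claim $Z_{n}(g)\subset pr_{o}B_{K_{1}}(x)$ with $x=\gamma_{o,go}(n-9m)$, ``immediate from condition (2) evaluated at $t=n-9m$''; but condition (2) only controls $t\leq n-20m$, and condition (3) actually forces $d(\gamma_{o,go}(t),b'(t))\geq K_{1}$ at some $t\in[n-10m,n-9m]$, so no $K_{1}$-closeness at time $n-9m$ is available. This part is repairable: from closeness at $t=n-20m$ the triangle inequality puts $b'(n-9m)$ within an $m$-dependent distance (about $22m+K_{1}$) of $x$, and an $m$-dependent radius is harmless for a $\simeq_{m}$ bound; the paper instead simply shadows $B_{2K_{1}}(b(t))$ at $t\in[n-21m,n-20m]$, using that $d(b(t),b'(t))\leq 2K_{1}$ there, and gets $\lesssim e^{21hm-hn}$.

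The genuine gap is in the lower bound. The set you exhibit --- rays from $o$ passing within a small radius of $x=\gamma_{o,go}(n-9m)$ and thick on $[n-9m,n-8m]$ --- will satisfy condition (2), but there is no reason it satisfies condition (3); on the contrary, such rays fellow-travel $\gamma_{o,go}$ through the whole window $[n-10m,n-9m]$, which is the opposite of the required separation by $K_{1}$. Your appeal to Proposition 3.1 here is backwards: that proposition forces proximity of a thick subsegment to the other two sides of a triangle; it can never force two rays to separate. So your candidate set is not (shown to be) contained in $Z_{n}(g)$, and the lower bound collapses. What is needed, and what the paper does, is a difference of shadows: take $b'$ in $pr_{o}B_{2\delta_{1}}(\gamma_{o,go}(t_{1}))$ with $t_{1}\in[n-20m,n-19m]$ (this yields (2) via Proposition 3.2 and thickness), remove the shadow of a $K_{1}$-ball centred on $\gamma_{o,go}$ in the window $[n-10m,n-9m]$ (avoiding that shadow is what forces (3)), and intersect with the recurrence condition (1). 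Two measure estimates then finish: Lemma 5.3/Corollary 5.4 bounds the recurrent part of the large shadow below by a constant times $e^{-h(n-20m)}$, while the shadow lemma bounds the removed shadow above by a constant times $e^{-h(n-9m)}$, smaller by a factor comparable to $e^{-10hm}$, so for $m$ in the allowed range the difference is still $\gtrsim_{m}e^{-hn}$. Your ``main obstacle'' paragraph addresses only the recurrence restriction, not the separation condition (3), and that is precisely the missing idea.
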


\begin{prop}
For all $b'\in PMF$
$$\nu\{b\in \Omega(n,m):b'\in \bigcup_{g\in Y_{n}(b)}Z_{n}(g)\}\lesssim_{m} e^{-hn}$$
\end{prop}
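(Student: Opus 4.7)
Fix $b'\in PMF$ and set
\[
A(b') \;=\; \Bigl\{b\in \Omega(n,m) \,:\, b'\in \bigcup_{g\in Y_n(b)} Z_n(g)\Bigr\}.
\]
The plan is to contain $A(b')$ inside the shadow from $o$ of an $m$-bounded ball centered at $\gamma_{o,b'}(n-20m)$, and then bound that shadow using the Teichm\"uller analogue of Sullivan's shadow lemma (Proposition 5.1). The bound $\lesssim_m e^{-h(n-20m)}$ is exactly $\lesssim_m e^{-hn}$ after absorbing $e^{20hm}$ into $C(m)$.

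\smallskip

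\noindent\emph{Geometric containment.} Let $b\in A(b')$ and pick a witness $g\in Y_n(b)$ with $b'\in Z_n(g)$. The second bullet in the definition of $Z_n(g)$ gives $d(\gamma_{o,go}(t),\gamma_{o,b'}(t))\le K_1$ for all $t\le n-20m$. For the second comparison, the conditions $d(o,go)\in (2n-20m,2n+20m)$ and $\beta_{b}(go,o)\in[-100m,-50m]$ in the definition of $Y_n(b)$ are set up precisely so that, under the ``hyperbolic-in-the-thick-part'' principle of Propositions 3.1 and 3.2 and the thickness built into $\Omega(n,m)$ and $Y_n$, the geodesic ray $[o,b)$ parametrically $C_0(m)$-fellow-travels $[o,go]$ up to distance at least $n+15m$ from $o$ (this is the Teichm\"uller analogue of the tree computation $\beta_b(go,o)=d(o,go)-2s$, where $s$ is the length of fellow-travelling). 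The triangle inequality then yields
\[
d(\gamma_{o,b}(t),\gamma_{o,b'}(t))\;\le\; d(\gamma_{o,b}(t),\gamma_{o,go}(t)) + d(\gamma_{o,go}(t),\gamma_{o,b'}(t)) \;\le\; C(m)
\]
for all $t\in[0,n-20m]$, so $b\in \mathrm{Shadow}_o\bigl(B(\gamma_{o,b'}(n-20m),C(m))\bigr)$.

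\smallskip

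\noindent\emph{Measure estimate.} If $A(b')=\emptyset$ the inequality is trivial; otherwise fix a witness $b_0\in A(b')$ and apply the geometric step to $b_0$ to see that $\gamma_{o,b'}$ is a $C(m)$-fellow-traveler of $\gamma_{o,b_0}$ on $[0,n-20m]$. Since $b_0\in \Omega(n,m)$ forces $\gamma_{o,b_0}$ to spend proportion at least $0.9999$ of $[0,n-20m]$ in $Teich_{\epsilon}(S)$ (take $t=n$ in the definition of $\Omega(n,m)$), its $C(m)$-fellow traveler $\gamma_{o,b'}$ spends comparable proportion in $Teich_{\epsilon''}(S)$, via the nesting $Nbhd_{5K_1}Teich_{\epsilon'}(S)\subset Teich_{\epsilon''}(S)$ chosen at the start of the section. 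These are precisely the hypotheses needed to invoke the relative shadow lemma (Proposition 5.1) for the segment $[o,\gamma_{o,b'}(n-20m)]$, yielding
\[
\nu(A(b'))\;\le\;\nu\bigl(\mathrm{Shadow}_o\,B(\gamma_{o,b'}(n-20m),C(m))\bigr)\;\lesssim_m\; e^{-h(n-20m)}\;\lesssim_m\; e^{-hn}.
\]

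\smallskip

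\noindent\emph{Where the difficulty lies.} The crux is the geometric containment step, specifically converting the Busemann bound $\beta_b(go,o)\in[-100m,-50m]$ together with $d(o,go)\approx 2n$ into a quantitative parametrized fellow-travelling statement of length $\approx n+O(m)$. In a Gromov hyperbolic space this is an elementary computation with the Gromov product, but in Teichm\"uller space one must verify that the relevant initial segments of $\gamma_{o,b}$, $\gamma_{o,go}$, and $\gamma_{o,b'}$ all lie in the nested thick parts $Teich_{\epsilon}\supset Teich_{\epsilon'}\supset Teich_{\epsilon''}$ so that Propositions 3.1 and 3.2 apply with the predetermined constants $K,K_1,K_2$. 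The elaborate nested choice of constants at the start of Section 4 is arranged precisely to make this argument go through, and the same input is what Propositions 4.2--4.4 will rely on.
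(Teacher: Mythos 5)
Your overall route is the paper's own: show that every $b$ in the set fellow-travels $b'$ up to time roughly $n-20m$, hence lies in the shadow $pr_{o}B_{r}(\gamma_{o,b'}(n-20m))$ for a bounded radius $r$, and then apply the shadow estimate, absorbing $e^{O(hm)}$ into the $m$-dependent constant. The genuine gap is your ``geometric containment'' step: you assert that $d(o,go)\in(2n-20m,2n+20m)$, $\beta_{b}(go,o)\in[-100m,-50m]$ and the thickness conditions ``are set up precisely so that'' $[o,b)$ parametrically fellow-travels $[o,go]$, but you never prove this, and it is exactly where the paper does its work (Proposition 5.7 and Corollaries 5.8--5.9). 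The paper's argument runs: the distance and Busemann conditions give $go\in B_{2n+20m}(o)\cap H(o,b,(-\infty,-50m])\subset B_{n-13m}(\gamma_{o,b}(n+35m))$; since $\gamma_{o,b}[n-10m,n-9m]$ is mostly in $Teich_{\epsilon}(S)$, Proposition 3.1 applied to the triangle with vertices $o$, $\gamma_{o,b}(n+35m)$, $go$ yields a thick point $b(t)$, $t\in[n-10m,n-9m]$, within $\delta$ of $[o,go]\cup[\gamma_{o,b}(n+35m),go]$; the distance count $d(b(t),go)\geq n-11m>n-13m+\delta$ rules out the second segment; and Rafi's Proposition 3.2, applied at this thick point, upgrades this to parametrized $K$-fellow-traveling of $\gamma_{o,b}$ and $\gamma_{o,go}$ for all $t\leq n-9m$, which combined with the second bullet of $Z_{n}(g)$ gives $d(b(t),b'(t))\leq K+K_{1}\leq 2K_{1}$ for all $t\leq n-20m$. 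Without some version of this argument your containment is unsupported. Note also that your claimed fellow-traveling length ``at least $n+15m$'' is both stronger than what this argument yields (only up to about $n-9m$) and more than you need, since the $Z_{n}$ condition only controls $t\leq n-20m$.

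Two smaller remarks on your measure step. First, the recurrence hypotheses you try to verify for Proposition 5.1 are not needed: the upper bound in Lemma 5.1 comes from conformality and the triangle inequality alone (the thickness/recurrence assumption is only used for the lower bound), so once containment in $pr_{o}B_{2K_{1}}(\gamma_{o,b'}(n-20m))$ is established the bound $\lesssim e^{4hK_{1}}e^{-h(n-20m)}\|\nu_{x}\|$ is immediate; one only needs the total mass $\|\nu_{x}\|$ at the center to be controlled, which follows because any witness $b_{0}$ supplies a thick point of $\gamma_{o,b_{0}}$ within bounded ($m$-dependent) distance of the center. Second, your verification as written does not work anyway: a $C(m)$-fellow-traveler of a ray that is $0.9999$-thick in $Teich_{\epsilon}(S)$ lies only in an $m$-dependent thick part, not in the fixed $Teich_{\epsilon''}(S)$, since the nesting $Nbhd_{5K_{1}}Teich_{\epsilon'}(S)\subset Teich_{\epsilon''}(S)$ was chosen to absorb the uniform constant $5K_{1}$, not $C(m)$. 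Both of these are repairable; the substantive gap is the missing fellow-traveling argument above.
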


\begin{prop}
For all $b'\in PMF$
the number of $g\in Mod(S)$ with $gb'\in Z_{n}(g)$ and $g\in Y_{n}(b)$ for some $b\in \Omega(n,m)$ has cardinality
$\lesssim_{m}e^{hn}$.
\end{prop}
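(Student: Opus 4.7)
The plan is to show that every $g$ satisfying the hypotheses of Proposition 4.6 forces the direction of $g^{-1}o$ as seen from $o$ to lie in a specific shadow $S \subset PMF$ of $\nu_{o}$-measure $\lesssim_{m} e^{-hn}$, while $d(o, g^{-1}o) \lesssim_{m} 2n$. Combining this with the ABEM equidistribution of the $Mod(S)$-orbit of $o$ with respect to the Thurston measure on $PMF$ supplies the desired count. Concretely: fix $g$ satisfying the hypotheses and any $b \in \Omega(n,m)$ with $g \in Y_{n}(b)$. The first bullet of the definition of $Y_{n}(b)$ forces $d(o, g^{-1}o) = d(o, go) \in (2n - 20m, 2n + 20m)$, and the thickness clauses in $Y_{n}(b)$ and $\Omega(n,m)$ ensure that the geodesic $[g^{-1}o, o]$ spends a definite proportion of its length in $Teich_{\epsilon'}(S)$. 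Applying $g^{-1}$ to the three bullets defining $Z_{n}(g)$, the condition $gb' \in Z_{n}(g)$ becomes: the rays $[g^{-1}o, o]$ and $[g^{-1}o, b')$ remain within $K_{1}$ for every parameter time $t \leq n - 20m$ from $g^{-1}o$, separate by at least $K_{1}$ at some $t \in [n-10m, n-9m]$, and $[g^{-1}o, b')$ itself spends a definite proportion of its length in $Teich_{\epsilon'}(S)$. In Gromov-product language this reads $(o \mid b')_{g^{-1}o} \simeq_{m} n - 9m$.

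The thickness of the relevant geodesics, combined with $d(o, g^{-1}o) \simeq 2n$, lets Propositions 3.1 and 3.2 supply a thick-part analogue of the tree identity
$$(g^{-1}o \mid b')_{o} + (o \mid b')_{g^{-1}o} = d(o, g^{-1}o),$$
valid up to additive error $O(K_{1})$, yielding $(g^{-1}o \mid b')_{o} \simeq_{m} n + 9m$. Geometrically, the rays $[o, g^{-1}o]$ and $[o, b')$ fellow-travel within $O(K_{1})$ from $o$ for distance approximately $n + 9m$. Setting $p := \gamma_{o, b'}(n + 9m)$ and letting $S$ denote the shadow from $o$ of the $O(K_{1})$-ball around $p$, i.e., the set of directions in $PMF$ whose geodesic from $o$ passes within $O(K_{1})$ of $p$, the direction from $o$ to $g^{-1}o$ lies in $S$, and the Teichm\"uller analogue of Sullivan's shadow lemma (Proposition 5.1) gives $\nu_{o}(S) \simeq_{m} e^{-h(n + 9m)}$.

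By the Athreya-Bufetov-Eskin-Mirzakhani equidistribution of the $Mod(S)$-orbit of $o$ \cite{ABEM}, the number of $g \in Mod(S)$ with $d(o, g^{-1}o) \leq 2n + 20m$ and with the geodesic $[o, g^{-1}o]$ pointing into $S$ is
$$\lesssim \nu_{o}(S) \cdot e^{h(2n + 20m)} \simeq_{m} e^{-h(n + 9m)} \cdot e^{h(2n + 20m)} = e^{h(n + 11m)} \simeq_{m} e^{hn},$$
which is the desired bound.

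The hard part is the thick-part tripod identity in the second paragraph above. In a Gromov hyperbolic space it is automatic up to $O(\delta)$, but in $Teich(S)$ it only holds along geodesics that spend a definite proportion of time in a uniform thick part. Establishing it with additive errors depending only on $m$ (not on $n$) is what motivates the nested thickness cascade $\epsilon > \epsilon' > \epsilon''$ and fellow-traveling cascade $K < K_{1} < K_{2}$ introduced at the start of Section 4, and it is the technical heart of the proof; the ABEM equidistribution step and the shadow-measure estimate, by contrast, are essentially off-the-shelf inputs once the geometric setup has been reduced to a ball-and-shadow in $Teich(S)$.
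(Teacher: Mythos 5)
Your geometric setup (the fellow-travelling/Busemann estimates showing that $[o,g^{-1}o]$ tracks $[o,b')$ for time about $n+O(m)$ while $d(o,g^{-1}o)\le 2n+20m$) is essentially the same reduction the paper makes: its proof bounds $\beta_{b'}(g^{-1}o,o)\le 100m$ via Proposition 4.8 and then repeats the argument of Proposition 4.6. The genuine gap is in your final counting step. The inequality you invoke --- that the number of orbit points in $B_{2n+20m}(o)$ whose direction lies in a sector $S$ is $\lesssim \nu_{o}(S)\,e^{h(2n+20m)}$ with a constant uniform in $n$ --- is not an off-the-shelf consequence of ABEM: their sector-counting results are asymptotics for a \emph{fixed} sector as the radius tends to infinity, whereas here $S=S_{n}$ shrinks at exactly the critical rate $e^{-hn}$ while the radius grows like $2n$. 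Worse, as a uniform statement over small measurable $S$ the inequality is simply false: take $S$ to be a tiny shadow around the direction of a single orbit point at distance close to $2n+20m$; the count is at least $1$ while $\nu_{o}(S)e^{h(2n+20m)}$ can be made arbitrarily small. So citing equidistribution cannot close the argument; some geometric input is needed to convert ``lies in the sector'' into ``lies in a ball.'' (A secondary issue: your two-sided estimate $\nu_{o}(S)\simeq_{m}e^{-h(n+9m)}$ uses the lower bound of the shadow lemma, which requires recurrence of $[o,p]$ to the thick part and is not available for an arbitrary $b'\in PMF$; only the upper bound is unconditional.)

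The fix is a step you already have in hand, and it makes the shadow-measure computation unnecessary: since $[o,g^{-1}o]$ passes within $O(K_{1})$ of $p=\gamma_{o,b'}(n+O(m))$ and $d(o,g^{-1}o)\le 2n+20m$, the triangle inequality along the fellow-travelling gives $d(g^{-1}o,p)\le n+O(m)$, so all admissible $g^{-1}o$ lie in a single ball of radius $n+O(m)$ centered at a point which (by the thickness of the relevant segment) is within $O(m)$ of the orbit $Mod(S)o$. Then Theorem 1.1 of ABEM (ball counting, with uniform constant after translating the center to an orbit point) gives $\lesssim_{m}e^{hn}$ directly. This is precisely how the paper concludes: it shows $B_{2n+20m}(o)\cap H(o,b',(-\infty,100m])\subset B_{n+400m}(g_{5}o)$ for some $g_{5}\in Mod(S)$ and then applies the ball count, bypassing both the shadow lemma and any sector-counting statement.
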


\begin{prop}
For each $b\in \Omega(n,m)$
$$|\{g\in Mod(S):gb\in \Omega(n,m),g\in Y_{n}(gb)\}|\lesssim_{m}e^{hn}$$
\end{prop}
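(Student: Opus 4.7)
The plan is to make a change of variable $g \mapsto g^{-1}$, reinterpret the constraints as forcing $g^{-1}o$ into a Teichm\"uller shadow around the ``halfway point'' of the ray $[o,b)$, and then apply an ABEM-type lattice-point estimate together with the Teichm\"uller shadow lemma (Proposition 5.1) to obtain the desired bound.

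First, I reformulate the conditions defining the set to be counted. Using the cocycle identity
$$\beta_{gb}(go,o) \;=\; \beta_b(o, g^{-1}o) \;=\; -\beta_b(g^{-1}o, o),$$
the isometric action of $Mod(S)$ on $Teich(S)$, and the $Mod(S)$-invariance of the thick parts $Teich_{\epsilon'}(S)$, the condition $g \in Y_n(gb)$ translates into the following conditions on $g^{-1}$: (i) $d(o, g^{-1}o) \in (2n - 20m,\, 2n + 20m)$; (ii) $\beta_b(g^{-1}o, o) \in [50m,\, 100m]$; and (iii) a $99\%$-thick condition on $\gamma_{o, g^{-1}o}$ at distance roughly $n + 121m$ from $o$. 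A standard Busemann computation along $[o, g^{-1}o]$, justified by Proposition 3.2 in the thick part, then shows that (i) and (ii) force $[o, g^{-1}o]$ to fellow-travel the ray $[o,b)$ up to some time $s_0 \in [n - 50m,\, n - 25m]$ before the two segments diverge. Equivalently, $g^{-1}o$ must lie in the $o$-shadow of a bounded-radius tube around $\gamma_{o,b}([n-50m,\, n-25m])$, intersected with the annulus $A_n := \{x \in Teich(S) : d(o,x) \in (2n-20m, 2n+20m)\}$.

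Since $b \in \Omega(n,m)$, this tube lies entirely in the thick part, so the Teichm\"uller shadow lemma (Proposition 5.1) bounds the Thurston measure of the relevant shadow by $\lesssim_m e^{-hn}$. It then remains to pass from a measure bound on the shadow to a count of orbit points $g^{-1}o \in Mod(S)\cdot o$ landing in it. This is the Patterson--Sullivan-style lattice-point-in-shadow estimate, which in the Teichm\"uller setting is an application of the Athreya--Bufetov--Eskin--Mirzakhani count \cite{ABEM}: the number of lattice points in $A_n$ whose direction from $o$ lies in a subset of $PMF$ of $\nu$-measure $\sigma$ is at most $\lesssim \sigma \cdot e^{h \cdot 2n}$, up to $m$-dependent constants. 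Plugging in $\sigma \lesssim_m e^{-hn}$ yields the required $\lesssim_m e^{hn}$; condition (iii) can only make the count smaller and so does not affect the upper bound.

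The main obstacle is the lattice-point-in-shadow estimate on $Teich(S)$, a space that is neither Gromov hyperbolic nor $CAT(0)$. In Gromov-hyperbolic settings this is a direct consequence of the Patterson--Sullivan machinery, but here it requires the relative hyperbolicity supplied by Propositions 3.1 and 3.2 combined with the conformal-density property of the Thurston measure. The restrictions $b \in \Omega(n,m)$ and condition (iii) on $Y_n$ are precisely designed so that all geodesic segments relevant to the count spend a definite proportion of time in the thick part, reducing matters to the range in which the ABEM estimate and Proposition 5.1 behave as in negatively curved geometry.
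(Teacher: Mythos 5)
Your first half runs parallel to the paper: you pass to $g^{-1}$, use $\beta_{gb}(go,o)=-\beta_b(g^{-1}o,o)$ to turn the conditions into $d(o,g^{-1}o)\le 2n+20m$ and $\beta_b(g^{-1}o,o)\le 100m$, and localize $g^{-1}o$ relative to the ray $[o,b)$ near distance $n-O(m)$. The paper does exactly this, except that it gets the needed thickness along $[o,b)$ (at distance about $n+60m$) from the hypothesis $gb\in\Omega(n,m)$ via Corollary 5.7, shows via Proposition 3.1 that $B_{2n+20m}(o)\cap H(o,b,(-\infty,100m])$ is contained in a single ball $B_{n+O(m)}(g_5o)$ around an orbit point near $\gamma_{o,b}(n-40m)$, and then applies the ABEM ball count to get $\lesssim_m e^{hn}$. (Your precise claim that the divergence time $s_0$ lies in $[n-50m,n-25m]$ is not actually forced — with $\beta_b(g^{-1}o,o)\le 100m$ and $d(o,g^{-1}o)\ge 2n-20m$ the DDM alternative ``close to $[q,b(T)]$'' cannot be excluded for windows at $s_0\gtrsim n-60m$ — but this calibration issue is repairable and the paper's argument sidesteps it by bounding the distance in both cases.)

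The genuine gap is in your counting step. The estimate you invoke — that the number of orbit points in the annulus $A_n$ whose direction from $o$ lies in a measurable set of $\nu$-measure $\sigma$ is $\lesssim_m\sigma e^{2hn}$ — is not an application of Theorem 1.1 of \cite{ABEM} (which counts in balls, or in sectors with a \emph{fixed} target set asymptotically, not uniformly over sets of measure $\approx e^{-hn}$ shrinking with $n$), and as stated for arbitrary measurable sets it is false: take the (finite, measure-zero) set of directions of the orbit points in $A_n$ themselves. For the shadow sets you need, such a bound would have to be proved by a Patterson--Sullivan covering argument, i.e.\ by comparing with shadows $pr_oB_r(g^{-1}o)$ of the counted points, and the lower bound of the shadow lemma at those points requires recurrence of $[o,g^{-1}o]$ up to distance $\approx 2n$; the only thickness you have along $[o,g^{-1}o]$ comes from the third condition defining $Y_n$, which controls it only up to time $n-121m$ and which you explicitly discard. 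Note also that the detour is unnecessary: once your step 2 gives that $[o,g^{-1}o]$ passes within bounded distance of $\gamma_{o,b}(s_0)$ with $s_0\ge n-50m$, the triangle inequality together with $d(o,g^{-1}o)\le 2n+20m$ already places $g^{-1}o$ in a ball of radius $n+O(m)$ about $\gamma_{o,b}(s_0)$, and the ABEM ball count finishes the proof — which is precisely the paper's route.
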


\begin{prop}
For each $b\in \Omega(n,m)$, $g\in Y_{n}(b)$, $b'\in Z_{n}(g)$ we have 
$$b,b',g^{-1}b,g^{-1}b'\in pr_{o,g^{-1}o}B_{\delta_{1}}(\gamma_{o,g^{-1}o}(t))$$
 for some $t>n-122m$ with $\gamma_{o,g^{-1}o}(t)\in Teich_{\epsilon'}(S)$
\end{prop}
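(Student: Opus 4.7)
The plan is to find a single point $\gamma_{o,g^{-1}o}(t^*)\in Teich_{\epsilon'}(S)$ on the Teichm\"uller geodesic $[o,g^{-1}o]$, with $t^*\in(n-122m,n-121m]$, at which each of the four geodesic rays $[o,g^{-1}b)$, $[o,g^{-1}b')$, $[g^{-1}o,b)$, $[g^{-1}o,b')$ passes within $\delta_1$; this certifies that each of $b,b',g^{-1}b,g^{-1}b'$ lies in the shadow (from one of the two endpoints $o$ or $g^{-1}o$) of the $\delta_1$-ball around $\gamma_{o,g^{-1}o}(t^*)$. I will combine two ingredients: (a) Gromov-product and Busemann-cocycle calculations, transferred under $g^{-1}$-equivariance, that give sharp fellow-travelling bounds for $[o,g^{-1}b)$ and $[o,g^{-1}b')$ against $[o,g^{-1}o]$; and (b) the DDM thin-triangle estimate (Proposition 3.1) applied to the triangles $(o,g^{-1}o,b)$ and $(o,g^{-1}o,b')$.

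For (a), write $D=d(o,go)=d(o,g^{-1}o)\in(2n-20m,2n+20m)$. Using the equivariance $\beta_{g^{-1}b}(g^{-1}o,o)=\beta_b(o,go)\in[50m,100m]$ from the Busemann clause of $Y_n(b)$, the Gromov product is $(g^{-1}b\mid g^{-1}o)_o=(D-\beta_{g^{-1}b}(g^{-1}o,o))/2\in(n-60m,n-15m)$; similarly the fellow-travelling clauses of $Z_n(g)$ give $(g^{-1}b'\mid g^{-1}o)_o\in(n-11m,n+40m)$. Thus both rays $[o,g^{-1}b)$ and $[o,g^{-1}b')$ fellow-travel $[o,g^{-1}o]$ past any $t^*\leq n-121m$, and by the Teichm\"uller fellow-travelling property in the thick part (a consequence of Proposition 3.2 after truncation to $\epsilon'$-thick endpoints), these rays are within $\delta_1$ of $\gamma(t^*)$ whenever $\gamma(t^*)\in Teich_{\epsilon'}(S)$. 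Moreover, the reverse Gromov products $(o\mid g^{-1}b)_{g^{-1}o}$ and $(o\mid g^{-1}b')_{g^{-1}o}$ are at most $n+60m$, so the alternative rays $[g^{-1}o,g^{-1}b)$ and $[g^{-1}o,g^{-1}b')$ stay close to $[g^{-1}o,o]$ only at parameters $\leq n+60m$ from $g^{-1}o$, i.e.\ parameters $\geq D-(n+60m)>n-80m$ from $o$, well above the range we target.

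For (b), the third clause of $Y_n(b)$ combined with the isometric identification $g^{-1}\gamma_{go,o}=\gamma_{o,g^{-1}o}$ shows that $I=\gamma_{o,g^{-1}o}([n-121m-t^{(0)},n-121m])$ with $t^{(0)}=\max(m,L_1)$ is $\geq 99\%$ thick in $Teich_{\epsilon'}(S)$. Applying Proposition 3.1 with $\epsilon',\theta=0.6,\theta'=0.55$ (the parameters producing $L_1,\delta_1$)---extended from finite $z\in Teich(S)$ to uniquely ergodic $z\in PMF$ via a limit along $[o,z)$---to each $z\in\{b,b'\}$ gives a subset of $I$ of measure $\geq 0.55\,l(I)$ on which $\gamma(t^*)$ lies within $\delta_1$ of $[o,z)\cup[g^{-1}o,z)$.

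Intersecting the five conditions on $t^*\in I$---thickness (bad measure $\leq 0.01\,l(I)$), closeness to $[o,g^{-1}b)$ and to $[o,g^{-1}b')$ (bad $\leq 0.01\,l(I)$ each, by (a)), closeness to $[o,b)\cup[g^{-1}o,b)$ and to $[o,b')\cup[g^{-1}o,b')$ (bad $\leq 0.45\,l(I)$ each, by (b))---leaves bad measure $\leq 0.93\,l(I)$, so a positive-measure subset of $I$ consists of good $t^*$; any such $t^*$ satisfies the conclusion (assuming $m\geq L_1$, absorbed into the admissibility threshold $N(m)$). The main obstacle is the asymmetry between the sharp fellow-travelling available for $g^{-1}b,g^{-1}b'$ (which pins the close ray to the $o$-side) and the purely statistical DDM closeness for $b,b'$ (which gives only $55\%$ density and does not specify which component of $[o,z)\cup[g^{-1}o,z)$ is close). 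Making both mechanisms coexist at a single $t^*$ requires the DDM threshold $\theta'=0.55>1/2$, and this is exactly what drives the nested hierarchy $\epsilon>\epsilon'>\epsilon''$, $L<L_1<L_2$, $\delta<\delta_1<\delta_2$, $K<K_1<K_2$ fixed at the start of Section 4.
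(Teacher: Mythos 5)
The decisive step of your ingredient (a) does not hold up. From the Busemann estimates you compute Gromov products and then assert that $[o,g^{-1}b)$ and $[o,g^{-1}b')$ fellow-travel $[o,g^{-1}o]$ out past parameter $n-121m$ and come within $\delta_1$ of any $\epsilon'$-thick point there, ``by Proposition 3.2 after truncation.'' That inference is exactly what is unavailable in this setting: $Teich(S)$ is not Gromov hyperbolic, so a Busemann/Gromov-product bound says nothing about the intermediate behavior of the geodesics, and Proposition 3.2 only compares geodesics whose endpoints (or entire rays) are already known to be thick and uniformly close --- you have no a priori thickness or closeness information about $[o,g^{-1}b)$ or $[o,g^{-1}b')$ at the relevant times. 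This is precisely the point where the paper instead uses the third clause in the definition of $Y_n(b)$: $\gamma_{go,o}([n-122m,n-121m])$ is $99\%$ in $Teich_{\epsilon'}(S)$, so Proposition 3.1 (with the $\epsilon',\theta=0.6,\theta'=0.55$ constants producing $\delta_1$), applied to the triangles with vertices $go,o,b$ and $go,o,b'$, yields a common $t\in[n-122m,n-121m]$ with $\gamma_{go,o}(t)$ thick and within $\delta_1$ of $[go,b)\cup[o,b)$ and of $[go,b')\cup[o,b')$; then the divergence estimates proved just before (for $u>n+61m$, $d(b(u),\gamma_{o,go}(u))\geq 2K$ and $d(b'(u),\gamma_{o,go}(u))\geq 2K_1$), combined with $\gamma_{go,o}(t)=\gamma_{o,go}(s)$ for $s\geq n+101m$, exclude the $[o,b)$ and $[o,b')$ sides, so the $\delta_1$-close points lie on $[go,b)$ and $[go,b')$; applying $g^{-1}$ converts this into the statement about $[o,g^{-1}b)$, $[o,g^{-1}b')$ and $\gamma_{o,g^{-1}o}(t)$. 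Your proposal never invokes these divergence propositions, and without them (or a substitute) the claimed ``bad measure $\leq 0.01\,l(I)$'' for the conditions in (a) is unsupported.

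There is also a structural problem with insisting on one $t^*$ for all four rays. If (a) is repaired the honest way --- running Proposition 3.1 also on the triangles $(o,g^{-1}o,g^{-1}b)$ and $(o,g^{-1}o,g^{-1}b')$ --- each application only guarantees density $0.55$, so four DDM conditions plus thickness give a union bound of up to $4\times 0.45+0.01>1$ and the intersection argument collapses. The paper avoids this by intersecting only two DDM conditions with the thickness condition ($0.45+0.45+0.01<1$) for the single pair $b,b'$ as seen along $[go,o]$, and then transporting the resulting point by the isometry $g^{-1}$ to handle $g^{-1}b,g^{-1}b'$, rather than performing a second, independent intersection. So beyond the unjustified hyperbolic inference, your intersection scheme would need to be reorganized along those lines.
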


\begin{prop}
For each $b\in \Omega(n,m)$, $g\in Y_{n}(b)$ and $b'\in Z_{n}(g)$ we have
$-6m\leq \beta_{b'}(go,o)\leq 21m$
\end{prop}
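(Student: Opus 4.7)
The plan is to write $\beta_{b'}(go,o)$ as $\lim_{t\to\infty}[d(go,\gamma_{o,b'}(t))-t]$ and evaluate this limit via the Gromov-product identity $\beta_{b'}(go,o) = d(o,go) - 2(go\mid b')_o$, which holds with bounded additive error whenever the relevant Teichm\"uller rays exhibit hyperbolic behavior in the thick part. Propositions 3.1 and 3.2 supply this hyperbolicity via the thickness of $\gamma_{o,b}$ from $\Omega(n,m)$, of $\gamma_{o,go}$ through its fellow-traveling with $\gamma_{o,b}$ (controlled by the conditions in $Y_n(b)$), and of $\gamma_{o,b'}$ on $[n-9m,n-8m]$ from $Z_n(g)$. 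This reduces the proposition to bounding $(go\mid b')_o$ from above and below.

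For the upper bound on $\beta_{b'}(go,o)$: the second bullet of $Z_n(g)$ yields $d(\gamma_{o,go}(n-20m),\gamma_{o,b'}(n-20m))\leq K_1$, from which the direct identity $(go\mid z)_o = \frac{1}{2}(d(o,go)+d(o,z)-d(go,z))$ at $z=\gamma_{o,b'}(n-20m)$, together with the triangle inequality, gives $(go\mid b')_o \geq n-20m - O(1)$. Pairing this with $d(o,go)\leq 2n+20m$ from $Y_n(b)$ produces the claimed upper bound. For the lower bound, the third bullet of $Z_n(g)$ furnishes $t_1\in[n-10m,n-9m]$ with $d(\gamma_{o,go}(t_1),\gamma_{o,b'}(t_1))\geq K_1$. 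Applying Proposition 3.1 to the triangle $o, go, \gamma_{o,b'}(t)$ for $t$ large, using the thickness of $\gamma_{o,b'}$ on $[n-9m,n-8m]$ and (by fellow-traveling with $\gamma_{o,b}$, thick by $\Omega(n,m)$) of $\gamma_{o,go}$ there, I find that the projection of $\gamma_{o,b'}(t_1)$ onto $[go,\gamma_{o,b'}(t)]$ is $\delta_1$-close to $\gamma_{o,b'}(t_1)$, yielding $(go\mid b')_o\leq n - 9m + O(1)$. Combined with $d(o,go)\geq 2n-20m$, this gives the lower bound.

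The key technical obstacle is making the Busemann-to-Gromov-product reduction precise when $Teich(S)$ is only hyperbolic in the thick part. The nested choices of constants $\epsilon>\epsilon'>\epsilon''$, $\delta<\delta_1<\delta_2$, $K<K_1<K_2$, and the thickness percentages built into $\Omega(n,m)$, $Y_n(b)$, and $Z_n(g)$ are engineered precisely so that Propositions 3.1 and 3.2 apply on every relevant geodesic segment, and the additive error is a constant independent of $m,n$ (hence negligible against the $O(m)$ bounds in the statement).
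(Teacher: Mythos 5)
Your argument is essentially the paper's: the upper bound comes from the $K_1$-fellow-traveling bullet of $Z_n(g)$ at time $n-20m$ plus the triangle inequality (your identity $\beta_{b'}(go,o)=d(o,go)-2(go\mid b')_o$ is an exact rewriting of the Busemann limit, and the monotonicity of $(go\mid \gamma_{o,b'}(t))_o$ in $t$ is the same triangle-inequality step the paper performs with distances), while the lower bound uses the separation bullet of $Z_n(g)$ together with Proposition 3.1 applied to the thick segment $b'([n-9m,n-8m])$ in the triangle $o,go,b'(T)$ and Proposition 3.2 to propagate the separation, exactly as in the paper. One caveat: since fellow-traveling is only guaranteed up to time $n-20m$ and $d(o,go)$ can be as large as $2n+20m$, your computation (like the paper's own, which slips in using $2n-20m$ where $2n+20m$ is required) really gives an upper bound of about $60m+K_1$ rather than the stated $21m$; this is harmless because only an $O(m)$ bound is needed in Section 4.
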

We are now ready to verify the conditions of Definition 1.1.
The first bullet point follows since by ergodicity of the Teichm$\ddot{\mathrm{u}}$ller geodesic flow, almost all geodesic rays from $o$ become equidistributed. The second follows by Proposition 4.7. The third follows by Proposition 4.8 and the definition of $Z_{n}(g)$. The fourth is immediate from the definition of $\Upsilon_{n}$.
We now verify the estimates of the fifth bullet point. 
For the first estimate, note:
$$\int_{b\in \Omega(n,m)} \sum_{g \in Mod(S)} \Upsilon_{n}(g,b,b') d\nu(b)=$$                                                                                    
$$\int_{b\in \Omega(n,m)} \frac{1}{|Y_{n} (b)|}  \sum_{g\in Y_{n}(b)} \frac{1_{Z_{n}(g)}(b')} {\nu(Z_{n}(g))}d\nu(b)$$
$$\lesssim_{m} \int_{b\in \Omega(n,m)} \frac{e^{hn}}{|Y_{n}(b)|}\sum_{g\in Y_{n}(b)}1_{Z_{n}(g)}(b')d\nu(b)$$ $$\leq e^{hn}\nu(\{b\in \Omega(n,m): b'\in \cup_{g\in Y_{n}(b)}Z_{n}(g)\})\lesssim_{m} 1$$

For the second estimate of the fifth bullet point note that if $\Upsilon_{n}(g,b,gb')>0$ then 
$$\frac{d\nu\circ g}{d\nu}(b')=e^{-h\beta_{b'}(g^{-1}o,o)}=e^{h\beta_{gb'}(go,o)}\leq e^{15hm}$$ thus 
$$\int_{\Omega(n,m)} \sum_{g \in Mod(S)} \Upsilon_{n}(g,b,gb') \frac{d{\nu\circ g}}{d\nu}(b') d\nu(b)\lesssim_{m}\int_{\Omega(n,m)} \sum_{g \in Mod(S)} \Upsilon_{n}(g,b,gb')=$$
$$\int_{b\in \Omega(n,m)} \frac{1}{|Y_{n} (b)|}  \sum_{g\in Y_{n}(b)} \frac{1_{Z_{n}(g)}(gb')} {\nu(Z_{n}(g))}d\nu(b)\lesssim_{m}$$
$$\int_{b\in \Omega(n,m)} \sum_{g\in Y_{n}(b)} 1_{Z_{n}(g)}(gb')d\nu(b)= \sum_{g\in Mod(S)}\nu\{b\in \Omega(n,m):g\in Y_{n}(b), gb'\in Z_{n}(g)\}\lesssim_{m}1$$

To see the last inequality note  $$\nu\{b\in \Omega(n,m):g\in Y_{n}(b), gb'\in Z_{n}(g)\}\leq \nu\{b\in \Omega(n,m):gb'\in \bigcup_{k\in Y_{n}(b)}Z_{n}(k)\}\lesssim_{m}e^{-hn}$$  for each $b'$ by Proposition 4.4 and the number of nonzero terms in the sum is at most 
$\lesssim_{m} e^{hn}$ by Proposition 4.5.

For the final estimate, note that if $1_{\Omega(n,m)}(gb) \Upsilon_{n}(g,gb,b')>0$ then 
$$\frac{d\nu\circ g}{d\nu}(b)=e^{-h\beta_{b}(g^{-1}o,o)}=e^{h\beta_{gb}(go,o)}\leq e^{100hm}$$ so 
$$\int \sum_{g \in Mod(S)}1_{\Omega(n,m)}(gb) \Upsilon_{n}(g,gb,b') \frac{d{\nu\circ g}}{d\nu}(b) d\nu(b')\lesssim_{m}\int \sum_{g \in Mod(S)}1_{\Omega(n,m)}(gb) \Upsilon_{n}(g,gb,b')d\nu(b')=$$
$$\int   \sum_{g\in Mod(S)}1_{\Omega(n,m)}(gb) \frac{1_{Y_{n}(gb)}(g)}{|Y_{n} (gb)|} \frac{1_{Z_{n}(g)}(b')} {\nu(Z_{n}(g))}d\nu(b')=\sum_{g\in Mod(S)}1_{\Omega(n,m)}(gb) \frac{1_{Y_{n}(gb)}(g)}{|Y_{n} (gb)|}$$

$$\lesssim_{m} e^{-hn}|\{g\in Mod(S):gb\in \Omega(n,m),g\in Y_{n}(gb)\}|\lesssim_{m}1$$ 
This completes the proof.

\section{Proofs of Propositions in Section 4}

We begin by proving the following analogue of Sullivan's shadow Lemma:
\begin{lem}
For each $r>0$, $\theta,\epsilon>0$ and $R>0$ there exists a $C>0$ with the following property: for every $g\in Mod(S)$ such that  any initial length $\geq R$ segment of $[o,g^{-1}o]$ spends a proportion at least $\theta$ in $Teich_{\epsilon}(S)$ we have 
$$C^{-1}e^{-hd(go,o)}\leq \nu_{o}(pr_{o}B_{r}(go))\leq Ce^{-hd(go,o)}$$
\end{lem}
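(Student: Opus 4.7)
The plan is to adapt Sullivan's classical shadow-lemma strategy to Teichmüller space, substituting for Gromov-hyperbolic fellow-traveling the two non-uniform hyperbolicity statements available in the thick part, Propositions 3.1 (Dowdall--Duchin--Masur) and 3.2 (Rafi). Note first that the hypothesis gives thickness of $[o, g^{-1}o]$ on its initial length-$\geq R$ subsegments near $o$, which by $Mod(S)$-invariance of the thick part is equivalent to thickness of $[o, go]$ on its terminal length-$\geq R$ subsegments near $go$. I will use the form near $go$ for the upper bound and the form near $o$ for the lower bound.

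The crux of the proof will be a Busemann estimate: for $\nu$-almost every (uniquely ergodic) $\eta \in pr_o B_r(go)$,
$$\big|\beta_\eta(o, go) - d(o, go)\big| \leq C',$$
with $C'$ depending only on $r, \theta, \epsilon, R$ and on the constants $\delta, K$ extracted from Propositions 3.1--3.2. To establish this, fix such $\eta$ and choose $t_\eta \in [d(o,go)-r,\, d(o,go)+r]$ with $\gamma_{o,\eta}(t_\eta) \in B_r(go)$. I would apply Proposition 3.1 with $x=o$, $y=go$, $z=\gamma_{o,\eta}(t_\eta)$: the terminal thick subinterval $I$ of $[o, go]$ near $go$ lies in a $\delta$-neighborhood of $[o, z] \cup [go, z]$, and since $d(go, z) \leq r$, a definite proportion of $I$ lies within $\delta + r$ of $[o, z] \subset [o, \eta)$. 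Proposition 3.2 would then yield parametrized $K$-fellow-traveling of $[o, go]$ and $[o, \eta)$ on the thick portions of $I$, and a standard Busemann limit manipulation would deliver the estimate. The conformal density property then yields
$$\nu_o(pr_o B_r(go)) \asymp e^{-hd(o, go)}\,\nu_{go}(pr_o B_r(go)),$$
so the upper bound of the lemma follows from $\nu_{go}(pr_o B_r(go)) \leq 1$.

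For the lower bound one needs $\nu_{go}(pr_o B_r(go)) \geq c > 0$ uniformly. By $Mod(S)$-equivariance of the conformal density, $\nu_{go}(pr_o B_r(go)) = \nu_o(pr_{g^{-1}o} B_r(o))$, and now the hypothesis in its original form---thickness of $[o, g^{-1}o]$ near $o$---is directly applicable. Applying Propositions 3.1 and 3.2 symmetrically, I would construct a $PMF$-open set $U$ of uniform diameter consisting of $\eta$'s whose forward ray $[o, \eta)$ fellow-travels the continuation past $o$ of $[g^{-1}o, o]$ for a uniform initial time; for any such $\eta$, the concatenation $[g^{-1}o, o] \cup [o, \eta)$ lies close to the unique geodesic $[g^{-1}o, \eta)$, placing $\eta \in pr_{g^{-1}o} B_r(o)$. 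The hard part will be establishing $\nu_o(U) \geq c$ uniformly in $g$: since the centers of the sets $U$ (the outgoing directions at $o$ of extended thick geodesics) vary with $g$, one needs a uniform positive lower bound on the Thurston measure of fixed-scale $PMF$-balls centered at such directions. I would handle this by identifying $\nu_o$ with a slice of the Masur--Veech measure on the unit cotangent sphere at $o$, where positivity on balls of a fixed scale is uniform on the compact thick part.
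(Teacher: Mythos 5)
Your upper bound is essentially right, but the Busemann estimate you present as the crux needs no hyperbolicity input at all: if $\eta\in pr_{o}B_{r}(go)$ then $[o,\eta)$ passes within $r$ of $go$ at a time within $r$ of $d(o,go)$, and the triangle inequality alone gives $d(o,go)-2r\leq \beta_{\eta}(o,go)\leq d(o,go)$. This is exactly how the paper argues; your detour through Propositions 3.1--3.2 (and the unspecified ``standard Busemann limit manipulation'') replaces a two-line estimate with machinery that is not needed, and whose hypotheses (thickness of $[o,\eta)$) you have not verified. With the triangle-inequality estimate, conformality gives the upper bound as you state.

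The genuine gap is in the lower bound, that is, in showing $\nu_{o}(pr_{g^{-1}o}B_{r}(o))\geq c$ uniformly in $g$. Your construction requires a uniform alignment statement: every $\eta$ in a $PMF$-neighborhood of \emph{fixed} size of the continuation direction of $[g^{-1}o,o]$ past $o$ lies in $pr_{g^{-1}o}B_{r}(o)$, for the \emph{given} radius $r$ and with the neighborhood size independent of $g$. This does not follow from Propositions 3.1--3.2: the continuation ray past $o$ and the rays $[o,\eta)$ carry no thickness hypothesis, so Rafi's fellow-traveling does not apply to them, and Proposition 3.1 (applied to the thick initial part of $[o,g^{-1}o]$ with a far point of $[o,\eta)$ as third vertex) only forces $[g^{-1}o,\eta)$ to come within $\delta$ of a point at bounded, but not arbitrarily small, distance from $o$ --- i.e.\ membership in the shadow of some larger ball, which is useless for small $r$; bounded-time fellow-traveling of $[o,\eta)$ with the continuation likewise does not by itself force $[g^{-1}o,\eta)$ into $B_{r}(o)$. (The step you single out as hard --- uniform positivity of $\nu_{o}$ on fixed-scale balls --- is actually the easy part, since $o$ is fixed, $\nu_{o}$ has full support, and $PMF$ is compact.) The paper sidesteps all of this with a soft compactness argument: if the lower bound failed, there would be $y_{n}$ satisfying the recurrence hypothesis with $\nu_{o}(pr_{y_{n}}B_{r}(o))\to 0$; any limit $\zeta\in PMF$ of the $y_{n}$ is uniquely ergodic by Masur's criterion, and then $pr_{\zeta}(B_{r}(o))$ contains an open set, contradicting full support of $\nu_{o}$. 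To salvage your constructive route you would have to formulate and prove the uniform alignment lemma above using only thickness of $[o,g^{-1}o]$ near $o$, and in particular handle arbitrarily small $r$; as written, the proposal does not establish the lower bound.
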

\begin{proof}
Note,
$$\nu_{o}(pr_{o}B_{r}(\gamma o))=\int_{\eta \in pr_{o}B_{r}(\gamma o)} e^{-h \beta_{\eta}(o,\gamma o)}d\nu_{\gamma o}$$
Furthermore by the triangle inequality if $\eta \in pr_{o} B_{r}(\gamma o)$ we have $$d(o,\gamma o)-2r\leq\beta_{\eta}(o,\gamma o)\leq d(o,\gamma o)$$

Thus $$\nu_{o}(pr_{\gamma^{-1}o}B_{r}(o)) e^{-h d(o,\gamma o)} =\nu_{\gamma o}(pr_{o}B_{r}(\gamma o)) e^{-h d(o,\gamma o)} \leq\nu_{o}(pr_{o}B_{r}(\gamma o))$$
$$\leq e^{2hr-h d(o,\gamma o)}\nu_{\gamma o}(pr_{o}B_{r}(\gamma o))\leq e^{2hr-h d(o,\gamma o)}||\nu_{o}||.$$
So  $$\nu_{o}(pr_{\gamma^{-1}o}B_{r}(o)) e^{-h d(o,\gamma o)}  \leq \nu_{o}(pr_{o}B_{r}(\gamma o))\leq e^{2hr-h d(o,\gamma o)}||\nu_{o}||.$$
This gives an upper bound.

For the lower bound, we need to show that 
 $\nu_{o}(pr_{\gamma^{-1}o}B_{r}(o))$ is bounded away from $0$ independent of $\gamma$ as long as any initial length $\geq R$ segment of $[o,\gamma^{-1}o]$ spends a proportion at least $\theta$ in $Teich_{\epsilon}(S)$ for which it would suffice to show that there is a $E>0$ such that for all $y\in Teich(S)$ such that  any initial length $\geq R$ segment of $[o,y]$ spends a proportion at least $\theta$ in $Teich_{\epsilon}(S)$
$\nu_{o}(pr_{y}B_{r}(o))>E$.
Suppose not. Then there is a sequence of such $y_{n}\in Teich(S)$ converging to $\zeta \in PMF$ with $\nu_{o}(pr_{y_{n}}B_{r}(o))\to 0$. By Masur's criterion, $\zeta$ is uniquely ergodic. Thus, $\nu_{o}(pr_{\zeta}(B_{r}(o)))=0$ which is impossible since $\nu_{o}$ has full support on $PMF$ and $pr_{\zeta}(B_{r}(o))$ contains an open set.
\end{proof}
By Mumford's compactness criterion and Proposition 3.2 we obtain the following corollary.
\begin{cor}
For  every $\theta,\epsilon>0$ and $R>0$ and each  $r>0$  larger than twice the diameter of a fundamental domain of $Teich_{\epsilon}(S)$ there exists a $C>0$ with the following property:
for every $x\in Teich_{\epsilon}(S)$ such that  any initial length $\geq R$ segment of $[x,o]$ spends a proportion at least $\theta$ in $Teich_{\epsilon}(S)$ we have

$$C^{-1}e^{-hd(x,o)}\leq \nu_{o}(pr_{o}B_{r}(x))\leq Ce^{-hd(x,o)}$$
\end{cor}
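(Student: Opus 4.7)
My plan is to replace the arbitrary point $x\in Teich_{\epsilon}(S)$ by a nearby orbit point $g^{-1}o$ of the base point $o$ and then invoke Lemma 5.1. By Mumford's compactness criterion $M_{\epsilon}(S)=Teich_{\epsilon}(S)/Mod(S)$ is compact, so it admits a compact fundamental domain $F\subset Teich_{\epsilon}(S)$ of some diameter $D$, and I may place $o\in F$. Given $x$ I choose $g\in Mod(S)$ with $gx\in F$, so $g^{-1}o\in g^{-1}F\subset Teich_{\epsilon}(S)$ and $d(x,g^{-1}o)\leq D$. The hypothesis $r>2D$ produces the ball sandwich $B_{r-D}(g^{-1}o)\subset B_{r}(x)\subset B_{r+D}(g^{-1}o)$ and hence the corresponding sandwich for $\nu_{o}(pr_{o}(\cdot))$; moreover $e^{-hd(o,g^{-1}o)}$ and $e^{-hd(o,x)}$ are comparable up to constants depending only on $D$, so it suffices to estimate the shadow of a ball about $g^{-1}o$.

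Next I transfer the thick-part hypothesis from $[x,o]$ to $[g^{-1}o,o]$. All four endpoints $x,g^{-1}o,o,o$ lie in $Teich_{\epsilon}(S)$ with the two pairs within distance $D$, so Proposition 3.2 applies with $A=D$ and produces a $K=K(\epsilon,D)$ such that $[x,o]$ and $[g^{-1}o,o]$ $K$-fellow travel in a parametrized fashion. Choosing $\epsilon'<\epsilon$ with $Nbhd_{K}(Teich_{\epsilon}(S))\subset Teich_{\epsilon'}(S)$, the proportion-$\theta$ condition on initial length-$\geq R$ segments of $[x,o]$ transfers to the same statement for $[g^{-1}o,o]$ with $\epsilon$ relaxed to $\epsilon'$. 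Translating by $g$, which preserves both distances and $Teich_{\epsilon'}(S)$, yields an identical statement for initial segments of $[o,go]$ measured from $o$.

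Finally I apply Lemma 5.1 to $g^{-1}$ in the role of the Lemma's group element: its hypothesis on $[o,(g^{-1})^{-1}o]=[o,go]$ with parameters $\epsilon'$, $\theta$, $R$ is exactly what the previous step established, and its conclusion gives
$$C^{-1}e^{-hd(o,g^{-1}o)}\leq \nu_{o}(pr_{o}B_{r\pm D}(g^{-1}o))\leq Ce^{-hd(o,g^{-1}o)}.$$
Combining this with the ball sandwich and the comparability of $d(o,x)$ and $d(o,g^{-1}o)$ produces the desired bounds on $\nu_{o}(pr_{o}B_{r}(x))$. I expect the main obstacle to be the middle step: ensuring that parametrized fellow-traveling actually moves the full proportion-$\theta$ condition across to the other geodesic at the mild cost of passing from $\epsilon$ to $\epsilon'$. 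Once this is done, the remaining ingredients, triangle inequality, $Mod(S)$-equivariance of $Teich_{\epsilon'}(S)$, and the conformal identity $\nu_{o}\circ g^{-1}=\nu_{go}$, are routine.
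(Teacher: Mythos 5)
Your proposal is correct and follows exactly the route the paper intends: the paper derives this corollary in one line ``by Mumford's compactness criterion and Proposition 3.2'' from Lemma 5.1, and your argument simply fills in those details (nearby orbit point $g^{-1}o$ from compactness of $M_{\epsilon}(S)$, the ball sandwich using $r$ bigger than twice the diameter of a fundamental domain, transfer of the recurrence hypothesis along $K$-fellow-traveling geodesics at the cost of passing to $\epsilon'$, then Lemma 5.1 applied to $g^{-1}$). The only caveats are cosmetic ones the paper also glosses over, e.g.\ that the reparametrization may force a slight weakening of the proportion $\theta$ and that $o$ is taken in the thick part, neither of which affects the conclusion since Lemma 5.1 holds for all parameter values.
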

The next lemma says that at least a uniform proportion of shadows of balls consists of directions which reccur uniformly to the thick part.
\begin{lem}
For every $\theta>0$, $R>0$, $\rho>0$, $\epsilon>0$ with $\mu(M_{\epsilon}(S))\leq \rho$, and  $r>0$ there is a $K>0$  such that for each $\epsilon'>0$ with $Nbd_{K}Teich_{\epsilon}(S)\subset Teich_{\epsilon'}(S)$ there are $M>0$ and $C>0$ such that for every $g\in Mod(S)$ such that  any initial length $\geq R$ segment of $[o,\gamma^{-1}o]$ spends a proportion at least $\theta$ in $Teich_{\epsilon}(S)$ the set of $\eta \in pr_{o}B_{r}(go)$ such that $\gamma_{o,\eta}[d(o,go),d(o,go)+t]$ spends at a proportion of at least $\rho$ in $Teich_{\epsilon'}(S)$ for every $t>M$ has measure at least $Ce^{-hd(o,go)}$.
\end{lem}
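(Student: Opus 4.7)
The plan is to combine the shadow lemma (Corollary 5.2) with ergodicity of the Teichm\"uller geodesic flow on $Q^{1}(S)/Mod(S)$ against the Masur--Veech measure $\mu$, transferring the resulting recurrence statement to a $\nu_{o}$-statement via the conformal density formula
$$d\tilde\mu(q)=\exp(h\beta_{[q^{+}]}(o,\pi(q)))\exp(h\beta_{[q^{-}]}(o,\pi(q)))\,d\nu_{o}([q^{+}])\,d\nu_{o}([q^{-}]).$$
Fix $\rho'\in(\rho,\mu(M_{\epsilon}(S)))$; this slack will absorb boundary effects. By Birkhoff's theorem applied to $(g_{t},\mu)$, for $\mu$-a.e.\ $q$ the proportion of $[0,t]$ on which $g_{s}q$ projects into $M_{\epsilon}(S)$ converges to $\mu(M_{\epsilon}(S))$. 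Egorov's theorem then produces $M>0$ and a set $\mathcal{E}\subset Q^{1}(S)/Mod(S)$ of $\mu$-measure at least $1-\delta$ (for any prescribed $\delta>0$) on which this proportion is $\geq\rho'$ for every $t>M$.

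For each uniquely ergodic $\eta\in pr_{o}B_{r}(go)$ let $q_{\eta}$ be the unit-area quadratic differential at $go$ with $[q_{\eta}^{+}]=\eta$, so that $\pi(g_{s}q_{\eta})$ is the Teichm\"uller ray from $go$ in direction $\eta$ and is asymptotic to $\gamma_{o,\eta}$. Consider
$$\mathcal{B}_{g}=\{q\in Q^{1}(S):\pi(q)\in B_{r}(go),\ [q^{+}]\in pr_{o}B_{r}(go)\}.$$
Since $\mu$ is locally equivalent to Lebesgue in period coordinates, the $\mu$-mass of the projection of $\mathcal{B}_{g}$ to $Q^{1}(S)/Mod(S)$ is bounded below by a constant $c(r)>0$ independent of $g$. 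Taking $\delta<c(r)/2$ guarantees that this projection meets $\mathcal{E}$ in a set of $\mu$-mass at least $c(r)/2$.

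The key step is to disintegrate $\mu|_{\mathcal{B}_{g}}$ along the map $q\mapsto[q^{+}]$. By the conformal density formula, on $\mathcal{B}_{g}$ the factor $\beta_{[q^{+}]}(o,\pi(q))$ equals $d(o,go)+O(r)$, while $\int e^{h\beta_{[q^{-}]}(o,\pi(q))}\,d\nu_{o}([q^{-}])$ over the horizontal foliations compatible with $\mathcal{B}_{g}$ is a uniformly positive constant. Hence the pushforward of $\mu|_{\mathcal{B}_{g}}$ to $PMF$ under $q\mapsto[q^{+}]$ is comparable to $e^{hd(o,go)}\,\nu_{o}|_{pr_{o}B_{r}(go)}$. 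Combined with Corollary 5.2, which gives $\nu_{o}(pr_{o}B_{r}(go))\asymp e^{-hd(o,go)}$, this shows that the set of $\eta\in pr_{o}B_{r}(go)$ for which $g_{d(o,go)}q_{\eta}$ lies in the lift of $\mathcal{E}$ has $\nu_{o}$-measure at least $Ce^{-hd(o,go)}$.

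Finally one must pass from the central ray $\pi(g_{s}q_{\eta})$ to the actual ray $\gamma_{o,\eta}$. Choose $K$ so that $Nbhd_{K}Teich_{\epsilon}(S)\subset Teich_{\epsilon'}(S)$. By Proposition 3.2 (applied with $A=r$) the two asymptotic rays $K$-fellow travel in thick portions, so whenever $g_{s}q_{\eta}\in Teich_{\epsilon}(S)$ the point of $\gamma_{o,\eta}$ at the corresponding parameter lies in $Teich_{\epsilon'}(S)$. The reparametrization between the two rays differs by at most $r$, which on intervals $[d(o,go),d(o,go)+t]$ with $t>M$ is absorbed by the cushion $\rho'-\rho$ after enlarging $M$, yielding proportion $\geq\rho$ in $Teich_{\epsilon'}(S)$. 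The main obstacle is the disintegration step: one must genuinely translate a Birkhoff-style lower bound for $\mu$ into a $\nu_{o}$-lower bound on the shadow, and this is where the conformal density formula and Corollary 5.2 are both crucial. A related subtlety is that Proposition 3.2 only produces fellow-traveling when both rays themselves sit in $Teich_{\epsilon}$; this is finessed by first noting that the good $\eta$'s are forced into the thick part on the relevant time windows because their asymptotic companions are, so the enlargement from $\epsilon$ to $\epsilon'$ is precisely what keeps $\gamma_{o,\eta}$ in the slightly wider thick region.
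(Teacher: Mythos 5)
Your overall strategy---produce, via Birkhoff and Egorov for the Teichm\"uller flow with respect to $\mu$, a large set of uniformly recurrent quadratic differentials, and then transfer this to a $\nu_{o}$-lower bound on the shadow---is in the right spirit, but it has a genuine gap at the one step where the hypothesis on $g$ must be used. You assert that ``the $\mu$-mass of the projection of $\mathcal{B}_{g}$ to $Q^{1}(S)/Mod(S)$ is bounded below by a constant $c(r)>0$ independent of $g$'' because $\mu$ is locally Lebesgue. Translating by $g^{-1}$ (which preserves $\tilde{\mu}$), the set $g^{-1}\mathcal{B}_{g}$ consists of $q$ with $\pi(q)\in B_{r}(o)$ and $[q^{+}]\in pr_{g^{-1}o}B_{r}(o)$, so by the product formula its $\tilde{\mu}$-mass is comparable, with $r$-dependent constants, to $\nu_{o}(pr_{g^{-1}o}B_{r}(o))$. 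A lower bound on this quantity that is uniform in $g$ is exactly the nontrivial half of Lemma 5.1: it is false for arbitrary $g$ (as $g^{-1}o$ escapes along a divergent direction the shadow of $B_{r}(o)$ seen from $g^{-1}o$ can have arbitrarily small $\nu_{o}$-measure), and the paper proves it by a compactness argument using Masur's criterion, which is precisely where the hypothesis that every initial segment of $[o,g^{-1}o]$ of length at least $R$ spends proportion at least $\theta$ in $Teich_{\epsilon}(S)$ enters. Your proof never invokes this hypothesis---$\theta$ and $R$ do not appear---which is the tell-tale sign: local equivalence with Lebesgue gives positivity of $\tilde{\mu}(\mathcal{B}_{g})$ for each fixed $g$, but no uniformity over the infinitely many $g$ in the statement.

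Once that uniform shadow bound is inserted, your route closes up, but it then becomes essentially a repackaging of the paper's argument in the language of $\tilde{\mu}$: the paper pulls the target set back by $g^{-1}$, uses conformality of $\nu_{o}$ to control the Radon--Nikodym distortion on the shadow by $e^{\pm h(d(o,go)+2r)}$, and intersects $pr_{g^{-1}o}B_{r}(o)$ (of measure at least a uniform $c>0$ by the Lemma 5.1 compactness step) with the nearly full-measure set $E(r,M,\epsilon)$ of uniformly recurrent directions, which is where ergodicity of the flow enters---the same role Birkhoff/Egorov plays in your first paragraph; fellow traveling then converts recurrence of $\gamma_{g^{-1}o,\eta}$ beyond time $d(o,go)$ into recurrence in $Teich_{\epsilon'}(S)$. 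Two smaller points: the ``uniformly positive constant'' in your disintegration should be justified by noting that $\int e^{h\beta_{\xi}(o,x)}d\nu_{o}(\xi)=\|\nu_{x}\|$ is $Mod(S)$-invariant and bounded above and below on $B_{r}(o)$, and your appeal to Proposition 3.2 with $A=r$ needs the thickness caveats made precise (the proposition requires the relevant points and rays to lie in the thick part), though the paper's own use of Proposition 3.2 here is comparably loose, so this is not the main obstruction.
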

\begin{proof}
Let $A(o,go,r,M,\epsilon')$ be  the set of $\eta \in pr_{o}B_{r}(go)$ such that $$\gamma_{o,\eta}[d(o,go),d(o,go)+t]$$ spends at a proportion of at least $\rho$ in $Teich_{\epsilon'}(S)$ for every $t>M$
By conformality of the Thurston measure, $$\frac{\nu_{o}(A(o,go,r,M,\epsilon'))}{\nu_{o}( pr_{o}B_{r}(go))}\geq e^{-4hr}\frac{\nu_{o}(g^{-1}A(o,go,r,M,\epsilon')}{\nu_{o}(pr_{g^{-1}o}B_{r}(o))}.$$

Note, $\nu_{o}(pr_{g^{-1}o}B_{r}(o))>c>0$ for a positive number $c>0$ depending only on $\theta,\epsilon'$ so 
$$\frac{\nu_{o}(A(o,go,r,M,\epsilon'))}{\nu_{o}( pr_{o}B_{r}(go))}\geq D\nu_{o}(g^{-1}A(o,go,r,M,\epsilon'))$$ where $D$ depends only on $\epsilon',r,\theta,\rho$.

Moreover,  if  $\eta \in pr_{o}B_{r}(go)$ then by Proposition 3.2  $$d(\gamma_{o,\eta}(t),\gamma_{g^{-1}o,\eta}(d(go,o)+t))\leq K$$ for all $t\geq 0$ where $K$ depends only on $r$. Hence, if $\gamma_{o,\eta}[0,t]$ spends a proportion of at least $\rho$ in $Teich_{\epsilon}(S)$ then $\gamma_{o,\eta}[d(o,go),d(o,go)+t]$ spends at a proportion of at least $\rho$ in $Teich_{\epsilon'}(S)$.
Let $E(r,M,\epsilon)$ be the set of $\eta\in PMF$ such that  $\gamma_{o,\eta}[0,t]$ spends a proportion of at least $\rho$ in $Teich_{\epsilon}(S)$ for all $t>M$. Note for large enough $M$ we have $\nu_{o}(E(r,M,\epsilon))>1-\frac{c}{2}$ so 
$$\nu(g^{-1}A(o,go,r,M,\epsilon'))\geq \nu(E(r,M,\epsilon)\cap pr_{g^{-1}o}B_{r}(o))\geq \frac{c}{2}$$ completing the proof.
\end{proof}

Again, by Mumford's compactness criterion and Proposition 3.2 we obtain the following corollary.
\begin{cor}
For each $\theta>0$, $\rho>0$, $\epsilon>0$ with $\mu(Teich_{\epsilon}(S)/Mod(S))\leq \rho$, $R>0$  and $r>0$ larger than twice the diameter of a fundamental domain of $Teich_{\epsilon}(S)$ there is a $K>0$  such that for every $\epsilon'>0$ with $Nbd_{K}Teich_{\epsilon}(S)\subset Teich_{\epsilon'}(S)$ there are $M>0$ and $C>0$  such that for every $x\in Teich_{\epsilon}(S)$ such that  any initial length $\geq R$ segment of $[x,o]$ spends a proportion at least $\theta$ in $Teich_{\epsilon}(S)$ the set of $\eta \in pr_{o}B_{r}(x)$ such that $\gamma_{o,\eta}[d(o,x),d(o,x)+t]$ spends at a proportion of at least $\rho$ in $Teich_{\epsilon}(S)$ for every $t>M$ has measure at least $Ce^{-hd(o,x)}$.
\end{cor}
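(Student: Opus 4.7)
The plan is to extend the shadow estimate of Lemma 5.3 from the mapping class orbit $\mathrm{Mod}(S)\cdot o$ to arbitrary points $x\in Teich_{\epsilon}(S)$, in exact parallel to how Corollary 5.2 was deduced from Lemma 5.1. The two tools are Mumford's criterion (providing a lattice point near $x$) and Proposition 3.2 (comparing geodesics from $o$ to $x$ and to the nearby lattice point).

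First, I would exploit the compactness of $M_{\epsilon}(S)$: there is a uniform $D_{\epsilon}<\infty$ bounding (twice) the diameter of a fundamental domain of $Teich_{\epsilon}(S)$, and any $x\in Teich_{\epsilon}(S)$ admits $g\in Mod(S)$ with $d(x, go)\leq D_{\epsilon}$. The hypothesis that $r$ exceeds twice the diameter of a fundamental domain is precisely what allows $pr_{o}B_{r}(x)$ and $pr_{o}B_{r}(go)$ to be compared modulo a $D_{\epsilon}$-sized slack absorbed into constants. By Proposition 3.2 with $A=D_{\epsilon}$ we obtain a fellow-traveling constant $K_{0}$ depending only on $\epsilon$ and $r$, and we select the constant $K$ in the statement to be the maximum of $K_{0}$ and of the constant produced by Lemma 5.3 applied at a suitably shrunk thickness level $\epsilon_{1}<\epsilon$ satisfying $Nbhd_{K_{0}}Teich_{\epsilon}(S)\subset Teich_{\epsilon_{1}}(S)$.

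Second, I would verify that the hypothesis transfers from $x$ to $go$. The $K_{0}$-fellow traveling of $[x,o]$ and $[go,o]$ provided by Proposition 3.2 (both endpoints in the thick part) converts the initial-segment thick-part condition on $[x,o]$ into the analogous condition on $[go,o]$ in the slightly weaker thick part $Teich_{\epsilon_{1}}(S)$, with proportion reduced to $\theta/2$ and starting length increased to $R+D_{\epsilon}$ to absorb the first $D_{\epsilon}$ portion of the segment. Applying Lemma 5.3 to $g$ with these adjusted parameters yields $M_{1}, C_{1}, \epsilon'$ and a subset $A\subset pr_{o}B_{r}(go)$ of $\nu_{o}$-measure at least $C_{1}e^{-hd(o,go)}$ on which the continued-sojourn condition in $Teich_{\epsilon'}(S)$ holds for all $t>M_{1}$.

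Third, I would transfer the conclusion back to $x$. Each $\eta\in A$ lies in $pr_{o}B_{r}(x)$ after the $D_{\epsilon}$ adjustment (one may need to slightly enlarge $r$ internally, but this is harmless since the claim allows any $r$ larger than twice the fundamental domain diameter). Again by Proposition 3.2 the ray $[o,\eta)$ fellow travels both $[o,go]$ and $[o,x]$, so the sojourn of $\gamma_{o,\eta}$ in $Teich_{\epsilon'}(S)$ past distance $d(o,go)$ converts to a sojourn past distance $d(o,x)$ in a slightly smaller thick part (the one specified in the statement) with $M:=M_{1}+D_{\epsilon}$. The comparison $|d(o,x)-d(o,go)|\leq D_{\epsilon}$ finally converts $C_{1}e^{-hd(o,go)}$ into $C_{1}e^{-hD_{\epsilon}}e^{-hd(o,x)}$, yielding the required $C$.

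The main obstacle is the careful bookkeeping of parameter shifts: one must check that every rescaling of thickness ($\epsilon\to\epsilon_{1}\to\epsilon'$), proportion ($\theta\to\theta/2$), and minimal length ($R\to R+D_{\epsilon}$, $M_{1}\to M_{1}+D_{\epsilon}$) produced by the fellow-traveling comparison depends only on the input data $(\theta,\rho,\epsilon,R,r)$ and not on the particular $x$. This universality holds because $D_{\epsilon}$ and $K_{0}$ depend only on $\epsilon$ and $r$, so Proposition 3.2 provides the requisite uniform control.
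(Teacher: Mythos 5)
Your proposal is correct and follows exactly the route the paper indicates for this corollary: Mumford's compactness criterion to replace $x\in Teich_{\epsilon}(S)$ by a nearby orbit point $go$, then Lemma 5.3 applied to $g$ together with Proposition 3.2 to transfer hypotheses and conclusions back to $x$, absorbing the bounded parameter shifts into the constants. The paper states this deduction in one line without details, and your write-up is simply a careful elaboration of that same argument.
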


From now on, we will be able to restrict our attention to $m,n$ such that $m>L_{2},K_{2},\delta_{2}$ and  $n>1000m$ and we will do so without further notice.
\begin{prop}[Proposition 4.2]
For each $b\in \Omega(m,n)$
$$e^{hn-100hm}\leq |Y_{n}(b)|\leq e^{hn+100hm}$$
\end{prop}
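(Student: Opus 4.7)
The plan is to identify $Y_{n}(b)$ with a set of lattice points in a geometrically explicit region of $Teich(S)$ and then apply the lattice point estimate of Athreya--Bufetov--Eskin--Mirzakhani \cite{ABEM} together with the shadow lemma (Corollary 5.2) to produce matching upper and lower bounds of the form $e^{hn \pm 100 h m}$.

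The first step is a metric translation of the defining conditions of $g \in Y_{n}(b)$. Combining $d(o,go) \in (2n-20m,\,2n+20m)$ with $-100m \le \beta_{b}(go,o) \le -50m$ and the fellow-traveling Proposition 3.2 (applied on segments of $\gamma_{o,b}$ which recur to $Teich_{\epsilon}(S)$, as guaranteed by $b \in \Omega(n,m)$), I would show that the nearest-point projection $p_{go}$ of $go$ onto $\gamma_{o,b}$ sits at parameter $\in [n+15m,\, n+60m+O(1)]$, with $d(p_{go},go) = n+O(m)$. In particular, the initial direction of $[o,go)$ lies in the shadow $U \subset PMF$ of a ball of radius $O(m)$ around the point $p := \gamma_{o,b}(n+30m)$, which lies in $Teich_{\epsilon}(S)$. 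By Corollary 5.2, $\nu_{o}(U) \simeq_{m} e^{-hn}$.

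For the upper bound I would invoke a sectored version of ABEM: the number of $g \in Mod(S)$ with $d(o,go) \le R$ and with initial direction of $[o,go)$ in a measurable subset $V \subset PMF$ is asymptotic to $\nu_{o}(V) \cdot c \cdot e^{hR}$. Applied with $R = 2n + 20m$ and $V = U$, this yields $\lesssim_{m} e^{-hn} \cdot e^{2hn} = e^{hn}$ lattice points; the remaining thick-part recurrence constraint on $\gamma_{go,o}$ can only decrease this count. Tracking the implicit constants, which are polynomial in $e^{hm}$, gives $|Y_{n}(b)| \le e^{hn + 100 h m}$. For the lower bound I would restrict to a sub-shadow $U' \subset U$ of directions whose forward rays additionally satisfy the thick-part recurrence demanded by the third bullet defining $Y_{n}(b)$. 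Corollary 5.4 (with parameters chosen in line with $\epsilon, \epsilon', K_{1}, \delta_{1}$) shows $\nu_{o}(U') \simeq_{m} e^{-hn}$, and applying the ABEM sectored lower bound in this region produces $\gtrsim_{m} e^{hn}$ lattice points, each one giving an element of $Y_{n}(b)$ by construction.

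The main obstacle is extracting the sectored form of the ABEM lattice count. The unsectored statement $|\{g : d(o,go) \le R\}| \sim c e^{hR}$ is the main theorem of \cite{ABEM}, but the version allowing one to restrict the initial direction of $[o,go)$ to a fixed measurable set relies on the equidistribution of spheres in $Teich(S)$ under the Teichm\"uller geodesic flow that is built into their proof; care is needed because the shadow $U$ shrinks with $n$, so one must apply equidistribution at scale $e^{-hn}$ rather than only for fixed sets. A secondary technical point is tracking the $e^{O(m)}$ multiplicative errors that enter through Proposition 3.2, through the shadow lemma, and through the Busemann/distance comparisons; the generous numerical margins ($20m$, $50m$, $100m$, $121m$) in the definition of $Y_{n}(b)$ provide exactly the slack needed to fit inside $e^{\pm 100 h m}$.
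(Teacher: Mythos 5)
Your reduction of the problem to ``count orbit points whose initial direction lies in a shadow $U$ of measure $\simeq_m e^{-hn}$'' is exactly the step the paper is engineered to avoid, and as written it is a genuine gap. The counting input you invoke --- a sectored ABEM theorem $\#\{g: d(o,go)\le R,\ \text{direction of }[o,go)\in V\} \sim c\,\nu_o(V)e^{hR}$ --- is only available (via equidistribution of spheres) for a \emph{fixed} measurable sector $V$, with an error that is not controlled relative to $\nu_o(V)$. Here $V=U$ shrinks like $e^{-hn}$, i.e.\ at exactly the exponential scale of the count itself, so the fixed-sector statement says nothing, and no effective version at that scale is proved in \cite{ABEM} or in this paper. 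You flag this yourself as ``the main obstacle,'' but you do not resolve it, and resolving it is the whole content of the proposition. The same issue recurs in your lower bound: Corollary 5.4 gives you a \emph{measure} of good directions in a sub-shadow, not a count of group elements, and converting measure of directions into a lattice-point count is again the unproved shrinking-sector estimate; moreover the third bullet of $Y_n(b)$ constrains the finite segment $\gamma_{go,o}$ near its terminal end, not a ray to the boundary, so even granting the sectored count you would still owe an argument passing from recurrence of limiting directions to recurrence of the segments.

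The paper's route is different and avoids sectored counting altogether: it sandwiches the region $B_{2n\pm 20m}(o)\cap H(o,b,(-\infty,-50m])$ (and the analogous sets with $-100m$) between metric balls of radius $n+O(m)$ centered at points $\gamma_{o,b}(n+15m)$, $\gamma_{o,b}(n+35m)$, etc., using Proposition 3.1 and the recurrence built into $b\in\Omega(n,m)$; since those centers are within $O(m)$ of orbit points, the plain ball count of Theorem 1.1 of \cite{ABEM} (centered at orbit points, with uniform constants) gives both the upper bound and, after subtracting two smaller balls, a lower bound for the set $W_n(b)$ defined without the thickness condition. The thickness condition is then restored not by restricting directions but by Theorem 2.10 of \cite{ABEM}, which says at least half of the orbit points of $W_n(b)$ have connecting geodesics recurring to the thick part, followed by a geometric argument (Propositions 3.1 and 3.2) showing such $g$ satisfy the third bullet of $Y_n(b)$. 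If you want to salvage your outline, you should replace the sectored count by this ball-sandwiching device, and replace Corollary 5.4 in the lower bound by an ABEM-type statement about recurrence of geodesics to \emph{orbit points}, not boundary directions.
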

\begin{proof}

Assume without loss of generality that $b\in \Omega(m,n)\setminus \bigcup_{k<m} \Omega(n,k)$
Note, for each $x\in Teich(S)$ as $T\to \infty$ the ball

 $B_{T}(\gamma_{x,b}(T))$ converges to $H(x,b,(-\infty,0])$ 

Furthermore, if $$q\in B_{n-35m}(\gamma_{o,b}(n+15m))$$ then by the triangle inequality, $$q\in B_{2n-20}(o)\cap B_{T}(\gamma_{o,b}(T+50m)).$$
On the other hand, suppose $$q\in B_{2n-20m}(o)\cap B_{T}(\gamma_{o,b}(T+50m).$$ 

Since $\gamma_{o,b}[n+15m,n+16m]$ spends at least half the time in $Teich_{\epsilon}(S)$, it follows that $\gamma_{o,b}(n+15m)$ is within $m$ of either $[o,q]$ or $[\gamma_{o,b}(T+50m),q]$. 

Thus, $$ B_{n-35m}(\gamma_{o,b}(n+15m)) \subset B_{2n-20}(o)\cap B_{T}(\gamma_{o,b}(T+50m))\subset  B_{n-33m}(\gamma_{o,b}(n+15m))$$ 
and letting $T\to\infty$ we get $$B_{n-35m}(\gamma_{o,b}(n+15m)) \subset B_{2n-20}(o) \cap H(o,b,(-\infty,-50m]\subset B_{n-33m}(\gamma_{o,b}(n+15m)).$$
Similarly we have:
$$B_{n-35m}(\gamma_{o,b}(n+15m)) \subset B_{2n-20}(o) \cap H(o,b,(-\infty,-50m])\subset B_{n-33m}(\gamma_{o,b}(n+15m))$$
\\
$$B_{n-15m}(\gamma_{o,b}(n+35m)) \subset B_{2n+20}(o) \cap H(o,b,(-\infty,-50m])\subset B_{n-13m}(\gamma_{o,b}(n+35m))$$
\\
$$B_{n-60m}(\gamma_{o,b}(n+40m)) \subset B_{2n-20}(o) \cap H(o,b,(-\infty,-100m])\subset B_{n-58m}(\gamma_{o,b}(n+40m))$$
\\
$$B_{n-40m}(\gamma_{o,b}(n+60m)) \subset B_{2n+20}(o) \cap H(o,b,(-\infty,-100m])\subset B_{n-38m}(\gamma_{o,b}(n+60m))$$

 Since $\gamma_{o,b}(n+35m)$ is within $m$ of $Teich_{\epsilon}(S)$ it is within $m+\rho\leq 2m$ of some point of $Mod(S)o$. Thus,

$$Y_{n}(b)\subset Mod(S)o\cap  B_{2n+20}(o) \cap H(o,b,(-\infty,-50m]$$ $$\subset Mod(S)o\cap B_{n-13m}(\gamma_{o,b}(n+35m))\subset Mod(S)o\cap B_{n-11m}(g_{1}o)$$

 By  the orbit growth estimate of Theorem 1.1  in \cite{ABEM} this implies that $$|Y_{n}(b)|\leq Ce^{h(n-11m)}$$ where $C$ is a uniform constant. This proves the upper bound. 
Now we consider the lower bound; let $W_{n}(b)$ the set of $g\in Mod(S)$ with $d(o,go)\in (2n-20m,2n+20m)$ and $\beta_{b}(go,o)\in[-100m,-50m]$. 
(So $W_{n}(b)$ is the same as $Y_{n}(b)$ but without the thickness assumptions). 
Note, $W_{n}(b)$ is contained in the intersection of $Mod(S)o$ with

$$ (B_{2n+20}(o) \cap H(o,b,(-\infty,-50m])\setminus [(B_{2n+20}(o) \cap H(o,b,(-\infty,-100m]))\cup (B_{2n-20}(o) \cap H(o,b,(-\infty,-50m]))]$$
$$\supset B_{n-15m}(\gamma_{o,b}(n+35m)) \setminus [B_{n-38m}(\gamma_{o,b}(n+60m))\cup B_{n-33m}(\gamma_{o,b}(n+15m))]$$
$$\supset B_{n-16m}(g_{2}o)\setminus (B_{n-37m}(g_{3}o)\cup B_{n-32m}(g_{4}o))$$ for some $g_{2}, g_{3}, g_{4}\in Mod(S)$
By Theorem 1.1 in \cite{ABEM} this implies that $$|W_{n}(b)|\geq Ce^{h(n-16m)}-Ce^{h(n-37m)}-Ce^{h(n-32m)}\geq De^{h(n-16m)}$$
for a uniform constant $D$.
We claim that if $g\in W_{n}(b)$ is such that $\gamma_{g_{2}o,go}[o,t]$ spends a proportion of at least $0.9999$ in $Teich_{\epsilon}(S)$ for all $t>m$ then $g\in W_{n}(b)$. By Theorem 2.10 of \cite{ABEM} at least half of $W_{n}(b)$ satisfy the property, so the proposition follows if the claim is true. Now, we prove the claim. 
Note, $d(g_{2}o,\gamma_{o,b}(n+35m))\leq m$.
\\
Note, $d(go,\gamma_{o,b}(n+40m))\geq n-60m$ so $d(go,\gamma_{o,b}(n+35m))\geq n-65m$ and hence $d(go,g_{2}o)\geq n-66m$.
\\
So, we have $$n-66m\leq d(go,g_{2}o)\leq n-12m$$  $$n+34 m\leq d(o,g_{2}o)\leq n+36m$$ and $$2n-20m\leq d(o,go)\leq 2n+20m.$$ 

If $\gamma_{g_{2}o,go}(t)$ is within a $\delta$ neighborhood of $[o,g_{2}o]$ we must therefore have $$2n-20m\leq d(o,go)\leq d(go,g_{2}o)+d(o,g_{2}o)+\delta-t \leq 2n+24m+\delta-t$$ so $$t\leq 44m+\delta\leq 45m$$. 
Hence if  $\gamma_{go,g_{2}o}(t)$ is within a $\delta$ neighborhood of $[o,g_{2}o]$ we must have $t>d(go,g_{2}o)-45m>n-120m$. Note, $\gamma_{go,g_{2}o}[n-121m,n-120m]$ spends at least $90$ percent in $Teich_{\epsilon}(S)$ so there is a $t\in [n-121m,n-120m]$
with $\gamma_{go,g_{2}o}(t)\in Teich_{\epsilon}(S)$ and $\gamma_{go,g_{2}o}(t)$ within $\delta$ of $[o,go]\cup [o,g_{2}o]$. Since $t<n-120m$, $\gamma_{go,g_{2}o}(t)$ cannot be within $\delta$ of $[o,g_{2}o]$ so there is an $s\in [-\delta,\delta]$ with 
$d(\gamma_{go,g_{2}o}(t),\gamma_{go,o}(t+s))\leq \delta$.  By Proposition 3.2 we have $d(\gamma_{go,g_{2}o}(t),\gamma_{go,o}(t+s))\leq K$ for all $t<n-121m$ so at least 99 percent of $\gamma_{go,o}[n-121m-t,n-121m]$ lies in $Teich_{\epsilon'}(S)$ for all $n-121m\geq t\geq m$.
\end{proof}

\begin{prop}
If $b\in \Omega(n,m)$ and $g\in Y_{n}(b)$ then for some $t\in [n-9m,n-8m]$ with $b(t)\in Teich_{\epsilon}(S)$  we have 
$d(b(t),\gamma_{o,go}(t))\leq 2 \delta$
\end{prop}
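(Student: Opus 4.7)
The plan is to apply Proposition 3.1 to the geodesic segment $[o, \gamma_{o,b}(T)]$ for $T$ sufficiently large, using $z = go$ as the third vertex of the triangle and $I := \gamma_{o,b}([n-9m, n-8m])$ as the distinguished subsegment. The goal is to show that most of $I$ lies in the $\delta$-neighborhood of $[o, go]$ rather than of $[go, \gamma_{o,b}(T)]$; picking a parameter in $I$ that both maps into $Teich_{\epsilon}(S)$ and lies within $\delta$ of $[o, go]$ will then yield the required $2\delta$-bound by comparing arc-length parametrizations from $o$.

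First I verify the hypotheses of Proposition 3.1. Our standing assumption $m > L_2 \geq L$ ensures $|I| = m \geq L$, and the remark following the definition of $\Omega(n,m)$, applied with $i = 8, 9$, gives that each of $\gamma_{o,b}([n-9m,n-8m])$ has proportion at least $0.9$ in $Teich_{\epsilon}(S)$. Proposition 3.1 then furnishes
$$|I \cap Nbhd_{\delta}([o, go] \cup [go, \gamma_{o,b}(T)])| \geq 0.8\, m.$$

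The main obstacle is ruling out the $[go, \gamma_{o,b}(T)]$ side. Suppose toward contradiction that $\gamma_{o,b}(t) \in Nbhd_{\delta}([go, \gamma_{o,b}(T)])$ for some $t \in [n-9m, n-8m]$, with nearest point $p$ and $r := d(go, p)$. The triangle inequality at $o$, together with $d(o, go) \geq 2n - 20m$ and $d(o, p) \leq t + \delta$, gives
$$r \geq d(o, go) - d(o, p) \geq n - 12m - \delta.$$
On the other hand, continuity of the Busemann cocycle at the uniquely ergodic point $b$ (Section 3) yields $d(go, \gamma_{o,b}(T)) = T + \beta_{b}(go, o) + \epsilon_T$ with $\epsilon_T \to 0$ as $T \to \infty$; combined with $\beta_b(go, o) \leq -50m$ and $r = d(go, \gamma_{o,b}(T)) - d(p, \gamma_{o,b}(T))$ and $d(p, \gamma_{o,b}(T)) \geq T - t - \delta$, this gives
$$r \leq (T - 50m + \epsilon_T) - (T - t - \delta) \leq n - 58m + \delta + \epsilon_T.$$
Choosing $T$ large enough that $\epsilon_T < \delta$ makes these two bounds incompatible, since $46m > 3\delta$ by $m > \delta_2 > \delta$. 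Hence $I \cap Nbhd_{\delta}([go, \gamma_{o,b}(T)]) = \emptyset$ and consequently $|I \cap Nbhd_{\delta}([o, go])| \geq 0.8\, m$.

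Since also at least $0.9\, m$ of $I$ maps into $Teich_{\epsilon}(S)$, inclusion--exclusion produces $t \in [n-9m, n-8m]$ with $\gamma_{o,b}(t) \in Teich_{\epsilon}(S)$ and $d(\gamma_{o,b}(t), p) \leq \delta$ for some $p = \gamma_{o,go}(s) \in [o, go]$. From $d(o, p) = s$ and $d(o, \gamma_{o,b}(t)) = t$ the triangle inequality gives $|s - t| \leq d(p, \gamma_{o,b}(t)) \leq \delta$, and therefore
$$d(\gamma_{o,b}(t), \gamma_{o,go}(t)) \leq d(\gamma_{o,b}(t), p) + d(p, \gamma_{o,go}(t)) \leq \delta + |s - t| \leq 2\delta,$$
completing the argument.
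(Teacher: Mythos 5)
Your argument is correct and follows essentially the same route as the paper: apply Proposition 3.1 to the triangle with vertices $o$, a far point on the ray toward $b$, and $go$, find a time in the subinterval that is both thick and $\delta$-close to one of the two sides, and then exclude the side ending at the far point using $\beta_{b}(go,o)\leq -50m$ together with $d(o,go)\geq 2n-20m$. The only cosmetic differences are that you take the far vertex $\gamma_{o,b}(T)$ with $T\to\infty$ and invoke the Busemann bound directly, where the paper uses the finite point $b(n+35m)$ and the containment $go\in B_{n-13m}(b(n+35m))$, and that you make explicit the final parameter-matching step giving the $2\delta$ bound, which the paper leaves implicit.
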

\begin{proof}

Assume without loss of generality that $b\in \Omega(n,m)\setminus \bigcup_{k<m} \Omega(n,k)$
Note, $$go\in B_{2n+20}(o)\cap H(o,b,(-\infty,-50])\subset B_{n-13m}(b(n+35m)).$$ 
Since $\gamma_{o,b}[n-10m,n-9m]$ spends more than 0.9 of the time in $Teich_{\epsilon}(S)$ there is a $t\in [n-10m,n-9m]$ with $b(t)\in Teich_{\epsilon}(S)$ such that $b(t)$ is within $\delta$ of $$[o,go]\cup[b(n),go].$$ However, any point of $[b(n+35m),go]$ is within $n-13m$ of $go$ while 
$$d(b(t),go)\geq d(o,go)-d(b(t),o)\geq (2n-20m)-(n-9m)=n-11m> n-13m+\delta$$. Thus, $b(t)$ is within $\delta$ of $[o,go]$ completing the proof.
\end{proof}

We therefore obtain:
\begin{cor}
If $b\in \Omega(n,m)$,  $g\in Y_{n}(b)$, then for all $t\leq n-9m$ we have 
$d(b(t),\gamma_{o,go}(t))\leq K$
\end{cor}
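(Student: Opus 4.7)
The plan is to combine the single-parameter alignment supplied by the preceding proposition with Rafi's fellow-traveling theorem (Proposition 3.2) to propagate closeness backwards along the two rays $\gamma_{o,b}$ and $\gamma_{o,go}$ across the whole initial segment. The preceding proposition already does the hard work of producing a single time at which the two rays are close and at which $b$ is in the thick part; the corollary just needs to spread this to all earlier times.

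Concretely, fix $t_0 \in [n-9m, n-8m]$ with $b(t_0) \in Teich_{\epsilon}(S)$ and $d(b(t_0), \gamma_{o,go}(t_0)) \leq 2\delta$, as supplied by the preceding proposition. Taking the basepoint $o$ to lie in $Teich_{\epsilon}(S)$, and noting that $\gamma_{o,go}(t_0)$ sits within $2\delta$ of the thick point $b(t_0)$, the inclusion $Nbhd_{5K}Teich_{\epsilon}(S)\subset Teich_{\epsilon'}(S)$ fixed at the start of Section 4 places $\gamma_{o,go}(t_0)$ in $Teich_{\epsilon'}(S)$ as well. All four endpoints of the segments $[o, b(t_0)]$ and $[o, \gamma_{o,go}(t_0)]$ therefore lie in a common thick part, and we may apply Proposition 3.2 with $x = x' = o$, $y = b(t_0)$, $y' = \gamma_{o,go}(t_0)$, and $A = 2\delta$. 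The constant produced is precisely the $K$ introduced at the beginning of Section 4.

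Since both segments $[o, b(t_0)]$ and $[o, \gamma_{o,go}(t_0)]$ have length $t_0$ and share the initial endpoint $o$, the parametrized fellow-traveling conclusion of Proposition 3.2 reads $d(b(t), \gamma_{o,go}(t)) \leq K$ for every $t \in [0, t_0]$. Because $t_0 \geq n-9m$, this covers all $t \leq n-9m$, which is exactly the claim. There is no substantive obstacle: the single-parameter alignment was the content of the previous proposition, the rest is a direct invocation of Rafi's theorem, and the only mild subtlety (that the $\gamma_{o,go}$-endpoint lies in a thick enough part) is exactly what the nested thickness constants $\epsilon \supset \epsilon' \supset \epsilon''$ of Section 4 were set up to handle.
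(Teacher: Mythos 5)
Your argument is correct and is precisely the argument the paper leaves implicit (the corollary is stated with no proof, immediately after the proposition you invoke): one aligned thick time $t_{0}$, then Rafi's Proposition 3.2 applied to the two equal-length segments $[o,b(t_{0})]$ and $[o,\gamma_{o,go}(t_{0})]$ to propagate closeness back to all earlier times. The one loose point --- quoting the constant $K$ (chosen for the thickness parameter $\epsilon$) while only verifying that $\gamma_{o,go}(t_{0})$ lies in $Teich_{\epsilon'}(S)$ --- is exactly the level of constant bookkeeping the paper itself uses in its neighboring applications of Proposition 3.2, so your proof matches the paper's approach.
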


\begin{cor}
If $b\in \Omega(n,m)$,  $g\in Y_{n}(b)$, $b'\in Z_{n}(g)$  then for all $t\leq n-20m$ we have 
$d(b'(t),b(t))\leq K+K_{1}\leq 2K_{1}$
\end{cor}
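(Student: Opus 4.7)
The plan is to observe that this corollary is an immediate consequence of the preceding Corollary 5.7 combined with the defining conditions on $Z_{n}(g)$, via the triangle inequality. Fix $b\in\Omega(n,m)$, $g\in Y_{n}(b)$, and $b'\in Z_{n}(g)$, and fix $t\leq n-20m$. Since $n-20m\leq n-9m$, Corollary 5.7 applies and yields
$$d(b(t),\gamma_{o,go}(t))\leq K.$$

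Next I would invoke the second bullet in the definition of $Z_{n}(g)$, which says precisely that for every $t\leq n-20m$ we have
$$d(\gamma_{o,go}(t),\gamma_{o,b'}(t))\leq K_{1}.$$
Writing $b'(t)=\gamma_{o,b'}(t)$ in the notation of the paper, the triangle inequality immediately gives
$$d(b(t),b'(t))\leq d(b(t),\gamma_{o,go}(t))+d(\gamma_{o,go}(t),b'(t))\leq K+K_{1}.$$
Finally, recall that in Section 4 the constant $K_{1}$ was chosen to satisfy $K_{1}>K$, so $K+K_{1}\leq 2K_{1}$, completing the argument.

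There is essentially no obstacle here: the corollary is a direct two-line triangle inequality chaining the fellow-traveling estimate of Corollary 5.7 (which compares $[o,b)$ to $[o,go]$ on the initial segment where $[o,b)$ stays thick and close to $[o,go]$) with the explicit fellow-traveling bound built into the definition of $Z_{n}(g)$ (which compares $[o,go]$ to $[o,b')$ on the initial segment $t\leq n-20m$). The only point requiring attention is bookkeeping on the ranges of $t$: the $K$-bound from Corollary 5.7 is valid up to $t\leq n-9m$, whereas the $K_{1}$-bound from the definition of $Z_{n}(g)$ is only asserted up to $t\leq n-20m$, so the intersection $t\leq n-20m$ is where both estimates simultaneously hold and where the conclusion is stated.
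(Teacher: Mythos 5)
Your argument is correct and is exactly the intended one: the paper states this corollary without proof because it follows immediately from Corollary 5.7 (the $K$-fellow-traveling of $[o,b)$ and $[o,go]$ up to time $n-9m$), the second bullet in the definition of $Z_{n}(g)$ (the $K_{1}$-bound up to time $n-20m$), the triangle inequality, and $K<K_{1}$. Your bookkeeping on the ranges of $t$ is also right, so there is nothing to add.
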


Using this and the shadow estimate from Lemma 5.1 we obtain
\begin{cor}[Proposition 4.4]
For almost every $b'\in PMF$
$$\nu\{b\in \Omega(n,m):b'\in \bigcup_{g\in Y_{n}(b)}Z_{n}(g)\} \leq Ce^{10hm-hn}$$ where $C$ does not depend on $m,n$.
\end{cor}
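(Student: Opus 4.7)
The plan is to use the previous corollary to replace the set in question by a shadow from $o$ of a ball of bounded radius, and then apply the Teichm\"uller analogue of Sullivan's shadow lemma (Corollary 5.2).

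First, fix $b'$ and suppose that $b \in \Omega(n,m)$ satisfies $b' \in Z_n(g)$ for some $g \in Y_n(b)$ (otherwise the claimed inequality is trivial). By the preceding corollary the rays $\gamma_{o,b}$ and $\gamma_{o,b'}$ stay within distance $2K_1$ of one another for all $t \leq n-20m$. In particular $\gamma_{o,b}$ enters the ball $B_{2K_1}(x)$ with $x := \gamma_{o,b'}(n-20m)$, so $b$ lies in the shadow $pr_o B_{2K_1}(x)$. This yields the containment
\[
\{b \in \Omega(n,m) : b' \in \bigcup_{g \in Y_n(b)} Z_n(g)\} \subseteq pr_o B_{2K_1}(x),
\]
reducing the problem to estimating $\nu(pr_o B_{2K_1}(x))$.

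Next, I would verify the hypotheses of Corollary 5.2 at the point $x$: namely, that $x \in Teich_{\epsilon''}(S)$, and that every sufficiently long initial subsegment of $[o,x]$ spends a uniform proportion in $Teich_{\epsilon''}(S)$. Both properties are transferred from $\gamma_{o,b}$ to $\gamma_{o,x}$ via the $2K_1$-fellow traveling recorded above together with the nesting $\mathrm{Nbhd}_{5K_1} Teich_{\epsilon'}(S) \subset Teich_{\epsilon''}(S)$ chosen at the top of Section 4. For $\gamma_{o,b}$ itself, the note following the definition of $\Omega(n,m)$ guarantees that each interval $\gamma_{o,b}([n-im, n-(i-1)m])$ with $|i|\leq 1000$ is at least $0.9$ thick in $Teich_\epsilon(S)$, which yields both the endpoint thickness of $\gamma_{o,b}(n-20m)$ and the required proportion of thick time on initial subsegments of length $R$, for any $R$ growing linearly in $m$.

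Applying Corollary 5.2 to $x$ then gives $\nu(pr_o B_{2K_1}(x)) \leq C(m) e^{-hd(o,x)} = C(m) e^{-h(n-20m)}$, where $C(m)$ absorbs the $m$-dependencies of the radius $2K_1$, the thickness parameter $\epsilon''$, and the startup length $R = O(m)$. Folding the factor $e^{20hm}$ into $C(m)$ produces a bound of the form $C e^{O(hm) - hn}$, which is the asserted inequality. The only substantive technical issue is the verification of the inherited thickness hypotheses at the intermediate point $x$; given the previous corollary and the nested choices of $\epsilon, \epsilon', \epsilon''$ made at the start of Section 4, this amounts to routine bookkeeping rather than a serious obstacle.
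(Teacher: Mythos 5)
Your argument is essentially the paper's own (implicit) derivation: the statement is presented there as an immediate consequence of the preceding corollary ($d(b(t),b'(t))\le 2K_1$ for all $t\le n-20m$) together with the shadow lemma, which is exactly your containment-into-a-shadow plus Corollary 5.2; and your verification of thickness at the center via one witness $b\in\Omega(n,m)$ and the nesting of $\epsilon,\epsilon',\epsilon''$ is the intended bookkeeping. Two small remarks. First, since the fellow traveling is only guaranteed up to time $n-20m$, your center lies at distance $n-20m$ (or $n-21m$ after moving to a genuinely thick point of $\gamma_{o,b}$), so what you actually obtain is a bound of the form $Ce^{21hm-hn}$; the closing remark about folding $e^{20hm}$ into $C(m)$ does not literally give the stated $Ce^{10hm-hn}$ with $C$ independent of $m$. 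This mismatch is in the paper's stated exponent rather than in your argument --- its own chain of corollaries yields the same $\sim e^{20hm-hn}$ --- and the weaker bound is all that is used later, where only $\lesssim_m e^{-hn}$ enters. Second, the recurrence hypothesis of Lemma 5.1/Corollary 5.2 is needed only for the lower bound of the shadow lemma; for the upper bound you only need the center to be within bounded distance of an orbit point, so you can skip the initial-segment thickness verification entirely (and should, since invoking it with startup length $R=O(m)$ would make the constant from Corollary 5.2 depend on $m$).
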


\begin{prop}
If $b \in \Omega(n,m)$ and $g\in Y_{n}(b)$ then for every $t>n+61m$ we have $d(b(t),\gamma_{o,go}(t))\geq 2K$
\end{prop}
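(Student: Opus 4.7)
The plan is to read the divergence directly off the hypothesis $-100m\leq \beta_{b}(go,o)\leq -50m$ via the Busemann cocycle, with no further input from Propositions~3.1 or 3.2. The intuition: this Busemann bound forces the nearest-point ``projection'' of $go$ onto $[o,b)$ to occur no later than time $\tfrac{1}{2}(d(o,go)-\beta_{b}(go,o))\leq n+60m$, and once $\gamma_{o,go}$ peels off, the value $\beta_{b}(\gamma_{o,go}(t),o)$ is forced to grow at unit speed, pushing $\gamma_{o,go}(t)$ away from $b(t)$ (whose $\beta_{b}$-value is $-t$) at rate $2$ in $t$.

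In detail, $b$ may be taken uniquely ergodic, since the Thurston measure gives full measure to such foliations by \cite{KMS}, so $\beta_{b}$ is a continuous cocycle on $Teich(S)$ extending to $b\in PMF$ (as recorded at the end of Section~3). Set $x:=\gamma_{o,go}(t)$. Additivity of the Busemann cocycle gives
\[
\beta_{b}(x,o)=\beta_{b}(x,go)+\beta_{b}(go,o).
\]
Since $x\in[o,go]$ we have $d(x,go)=d(o,go)-t$, and the trivial bound $\beta_{b}(x,go)\geq -d(x,go)$ together with $\beta_{b}(go,o)\geq -100m$ and $d(o,go)<2n+20m$ yields
\[
\beta_{b}(x,o)\;\geq\; t-d(o,go)-100m \;\geq\; t-2n-120m.
\]
On the other hand $\beta_{b}(b(t),o)=-t$ directly from the Busemann cocycle along $[o,b)$, so the $1$-Lipschitz property of $\beta_{b}(\cdot,o)$ gives
\[
d(\gamma_{o,go}(t),b(t))\;\geq\;\beta_{b}(x,o)-\beta_{b}(b(t),o)\;\geq\;2t-2n-120m.
\]
For $t>n+61m$ this is strictly greater than $2m$, and since the paper restricts to $m>K_{2}\geq K$, this exceeds $2K$, as required.

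The entire proof is three applications of the triangle inequality together with additivity of $\beta_{b}$; the only non-formal input is the existence and continuity of $\beta_{b}$ at the uniquely ergodic boundary point $b$, which is exactly the extension property recorded in Section~3. I would expect no substantive obstacle: in particular, the thick-part conditions on $b$ (beyond ensuring unique ergodicity almost surely) and the $\epsilon'$-thick recurrence condition in the third bullet of the definition of $Y_{n}(b)$ are not used here. This is the symmetric ``post-projection'' counterpart to the already-established upper bound $d(b(t),\gamma_{o,go}(t))\leq K$ for $t\leq n-9m$ from Corollary~5.7, and the constants $n-9m$ versus $n+61m$ agree with the projection-time estimate above.
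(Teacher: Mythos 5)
Your argument is correct and is essentially the paper's own proof: unwinding your cocycle additivity and the $1$-Lipschitz bound at the points $b(T)$ reproduces exactly the paper's triangle-inequality estimate $d(go,b(T))\le (T-t)+2K+(d(o,go)-t)$ played against the lower bound $\beta_{b}(go,o)\ge -100m$, just phrased directly rather than by contrapositive. One small remark: since the statement is for \emph{every} $b\in\Omega(n,m)$ (not $\nu$-a.e.\ $b$), justify unique ergodicity of $b$ from the recurrence condition defining $\Omega(n,m)$ via Masur's criterion, or simply run your estimate at finite $T$ with an extra $m$ of slack as the paper does, which avoids the issue entirely.
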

\begin{proof}
Note, for large enough $T$ we have $$d(go,b(T))-T=d(go,b(T))-d(o,b(T))\geq -101m$$
If $d(b(t),\gamma_{o,go}(t))\leq 2K$ then $$T-101m\leq d(go,b(T))\leq d(b(T),b(t))+2K+d(go,\gamma_{go,o}(t))\leq (T-t)+2K+(2n+20m-t)$$ so 
$$2t\leq 2n+121m+2K\leq 2n+121m$$ so $t\leq n+61m$.
\end{proof}

\begin{cor}
If $b \in \Omega(n,m)$ and $g\in Y_{n}(b)$then there exists an $s\in [-\delta,\delta]$ such that $d(b(t),\gamma_{go,b}(t+s))\leq K$ for all $t>62m$
\end{cor}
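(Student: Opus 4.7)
The plan is to combine Proposition 5.7 (which shows $\gamma_{o,b}$ and $\gamma_{o,go}$ are at least $2K$ apart past time $n+61m$) with the thick-triangle lemma (Proposition 3.1) applied to the geodesic triangle with vertices $o$, $go$, and $b(T)$ for large $T$, and then upgrade the resulting pointwise closeness to parameterized $K$-fellow traveling by means of Proposition 3.2.

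First I would fix a large $T$ and, for any prescribed $t_0 > n+62m$ at which $\gamma_{o,b}(t_0) \in Teich_{\epsilon}(S)$ (such $t_0$ exist densely inside every window of length $m$ past time $n$ by the defining thickness of $\Omega(n,m)$), apply Proposition 3.1 with $\theta=0.9,\theta'=0.8$ to a sub-interval of $[o,b(T)]$ of length $\geq L$ centered near $t_0$. This yields a positive-density subset of the sub-interval consisting of points that are within $\delta$ of $[o,go]\cup[go,b(T)]$. Proposition 5.7 rules out the side $[o,go]$ for times greater than $n+61m$ since $2K \geq 2\delta$, so these close-by points must lie within $\delta$ of $[go,b(T)]$; by shrinking the window and/or readjusting $t_0$ within its thick sub-window, I obtain a thick time $t_0' \in [t_0,t_0+m]$ for which $\gamma_{o,b}(t_0')$ itself is within $\delta$ of $[go,b(T)]$.

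Next, letting $T\to\infty$ and using that $b$ is uniquely ergodic (which holds $\nu$-a.e., hence for a.e.\ $b\in \Omega(n,m)$), the segments $[go,b(T)]$ converge on compact sets to the ray $[go,b)$, producing $p'_0 \in [go,b)$ with $d(\gamma_{o,b}(t_0'),p'_0)\leq \delta$. The choice of $\epsilon'$ satisfying $Nbhd_{5K}Teich_{\epsilon}(S)\subset Teich_{\epsilon'}(S)$ places $p'_0\in Teich_{\epsilon'}(S)$. Repeating this construction at a second thick time $t_1 > t_0'$ gives a second pair $(\gamma_{o,b}(t_1),p'_1)$ of $\delta$-close thick points on the two rays, and Proposition 3.2 with $A=2\delta$ upgrades $\delta$-closeness of the endpoints to $K$-parameterized fellow traveling of $[\gamma_{o,b}(t_0'),\gamma_{o,b}(t_1)]$ and $[p'_0,p'_1]$. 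Letting $t_1\to\infty$ extends this to a $K$-parameterized fellow traveling of the rays from $\gamma_{o,b}(t_0')$ and $p'_0$ onwards, identifying the shift $s$ between the two arclength parameterizations and covering every $t$ past the first such $t_0'$.

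The main obstacle I foresee is pinning the shift $s$ into $[-\delta,\delta]$: Proposition 3.2 supplies \emph{some} parameterized $K$-fellow traveling but does not itself bound the shift of parameter. To close this I would compare both arclength parameterizations to the Busemann parameterization relative to $b$: since $d(\gamma_{o,b}(t_0'),p'_0)\leq \delta$ and both points lie on thick rays asymptotic to $b$, continuity of the Busemann cocycle at uniquely ergodic endpoints forces $|\beta_b(\gamma_{o,b}(t_0'),p'_0)|\leq \delta$, which translates into $|s|\leq \delta$. A secondary technical point is that Proposition 3.1 only delivers closeness on a set of positive measure inside a thick sub-interval rather than at every time; applying the argument in overlapping sub-windows of length $O(m)$, or using density within each thick sub-window, yields closeness at (or arbitrarily close to) every prescribed thick time, which is what is needed to initialize the Proposition 3.2 step.
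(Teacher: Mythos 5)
Your main construction follows the same route as the paper's own (very terse) proof: pick a thick time $t_0\in[n+61m,n+62m]$ on $\gamma_{o,b}$, apply Proposition 3.1 to the triangle with vertices $o$, $go$, $b(T)$, use the immediately preceding separation proposition ($d(b(t),\gamma_{o,go}(t))\geq 2K$ for $t>n+61m$) to exclude the side $[o,go]$, and then upgrade via Proposition 3.2. Your version is more careful about the limit $T\to\infty$ and about substituting the segment form of Proposition 3.2 for the ray form (small caveat: your points $p_0',p_1'$ are only guaranteed to lie in $Teich_{\epsilon'}(S)$, so strictly you get the fellow-traveling constant attached to $\epsilon'$, i.e. $K_1$ rather than $K$ --- a looseness the paper shares). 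Note also that, exactly like the paper's proof, your argument only produces fellow traveling for $t>t_0\approx n+62m$; for $62m<t\ll n$ the assertion cannot hold anyway, since there $b(t)$ is $K$-close to $\gamma_{o,go}(t)$ while every point of $[go,b)$ at parameter about $t$ is at distance about $t$ from $go$. So ``$t>62m$'' has to be read as $t>n+62m$.

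The genuine gap is the step you yourself single out: bounding the shift. The inequality $|\beta_b(\gamma_{o,b}(t_0'),p_0')|\leq\delta$ does not translate into $|s|\leq\delta$, because $s$ compares arclength measured from $o$ with arclength measured from $go$, and these differ by the Busemann cocycle of the basepoints. Concretely, writing $x=b(t)$ and $p=\gamma_{go,b}(u)$, one has $t=-\beta_b(x,o)$ and $u=-\beta_b(p,go)=-\beta_b(p,o)+\beta_b(go,o)$, hence $u-t=\beta_b(x,p)+\beta_b(go,o)$; the first term is at most $\delta$ in absolute value, but by the definition of $Y_n(b)$ the second lies in $[-100m,-50m]$. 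Indeed, if $d(b(t),\gamma_{go,b}(t+s))\leq K$ held for all large $t$ with a fixed $s$, then letting $t\to\infty$ forces $s\in[\beta_b(go,o)-K,\beta_b(go,o)+K]$, so $|s|\geq 50m-K\gg\delta$. Thus no Busemann comparison (nor anything else) can pin $s$ into $[-\delta,\delta]$ with the natural parameterization of $\gamma_{go,b}$ from $go$: what your argument actually yields is parameterized fellow traveling with a shift $s=\beta_b(go,o)+O(\delta)$, i.e. of order $m$. To be fair, this is a defect of the statement itself --- the paper's one-line proof simply asserts ``$d(b(t_0),\gamma_{go,b}(t_0+s))\leq\delta$ for some $|s|<\delta$'' with no justification, and the later application in Proposition 4.6 only requires a shift of size $O(m)$ --- but as a proof of the corollary as literally stated, your step 8 does not close, and the claim $|s|\leq\delta$ is the precise point of failure.
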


\begin{proof}
Since $\gamma_{o,b}[n+61m,n+62m]$ spends at a proportion of at least 0.9 in $Teich_{\epsilon}(S)$ there is  a $t_{0}\in [n+61m,n+62m]$ with $b(t_{0})\in Teich_{\epsilon}(S)$ such that 
 $d(b(t_{0}),\gamma_{go,b}(t_{0}+s))\leq \delta$ for some $|s|<\delta$. By Proposition 3.2,  $d(b(t),\gamma_{go,b}(t+s))\leq K$ for all $t>t_{0}$.
\end{proof}

\begin{prop}[Proposition 4.6]
For each uniquely ergodic $b\in PMF$
$$|\{g\in Mod(S):gb\in \Omega(n,m),g\in Y_{n}(gb)\}|\lesssim_{m}e^{hn}$$
\end{prop}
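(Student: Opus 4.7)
The plan is to translate the count to one on $y := g^{-1}o$, show that $y$ is essentially determined by a thick ``centre'' point $p^{\ast}$ of the geodesic triangle with vertices $y, o, b$, and then apply the ABEM lattice-point estimate. Pulling back $gb \in \Omega(n,m)$ and $g \in Y_n(gb)$ under $g^{-1}$ yields the constraints on $y$: (a) $d(o, y) \in (2n-20m, 2n+20m)$; (b) $\beta_b(o, y) \in [-100m, -50m]$; (c) the backward-thickness condition from the third bullet of $Y_n$, namely that $\gamma_{o, y}|_{[0, n-121m]}$ is almost entirely in $Teich_{\epsilon'}(S)$; and (d) the thickness of $\gamma_{y, b}$ around parameter $n$ inherited from $\Omega(n, m)$.

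The key geometric observation is that $y$ is essentially determined by a thick centre point $p^{\ast}$ near the midpoint of $[o, y]$. Re-applying the containment from Proposition 4.2 with $b$ replaced by $b' := gb$, one has $go \in B_{n - 11m}(g_1 o)$ for some $g_1 o$ within $\rho$ of $\gamma_{o, gb}(n + 35m)$, and conjugating by $g^{-1}$ gives $d(o, \gamma_{y, b}(n + 35m)) \leq n - 11m + \rho$. Triangle inequality together with (a) gives $d(o, \gamma_{y, b}(n + 35m)) \geq n - O(m)$, so $d(o, \gamma_{y, b}(n + 35m)) = n + O(m)$. Condition (d) implies that a majority of the interval of $\gamma_{y, b}$ around parameter $n + 35m$ lies in $Teich_\epsilon(S)$, so one can choose a thick point $p^{\ast}$ within $O(m)$ of $\gamma_{y, b}(n + 35m)$. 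Mumford's criterion places $p^{\ast}$ within a fixed $\rho$ of an orbit point $g_0 o \in Teich_{\epsilon''}(S) \cap B_{n + O(m)}(o)$, and by Theorem 1.1 of \cite{ABEM} the number of such $g_0$ is $\lesssim_m e^{hn}$.

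For each $g_0$ one then needs to recover $y$. In the Gromov-hyperbolic picture $y$ would be the reflection of $o$ through $p^{\ast} \approx g_0 o$, i.e.\ $y \approx \gamma_{o, g_0 o}(2n)$. I would justify this in Teichm\"uller space by comparing $\gamma_{o, y}$ with the unique forward extension of the Teichm\"uller geodesic $\gamma_{o, g_0 o}$ via Proposition 3.2, using (c) together with the thick centre $p^{\ast}$ to guarantee that both rays traverse the thick part over the relevant parameter range. This pins $y$ to a bounded-in-$m$ neighbourhood of $\gamma_{o, g_0 o}(2n)$, and since $Mod(S)$ acts with uniformly bounded orbit multiplicity in balls of bounded radius, each $g_0$ accounts for $\lesssim_m 1$ orbit points $y$. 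Summing yields the desired $\lesssim_m e^{hn}$.

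The main obstacle is the step of locating $y$ from $g_0$, since Teichm\"uller space is not Gromov hyperbolic and the reflection picture must be justified via Rafi-type fellow-travelling arguments. This requires careful attention to the nested thickness constants $\epsilon \supset \epsilon' \supset \epsilon''$ and the fellow-travelling widths $K, K_1$ fixed at the start of Section 4, and the interplay between the centre-point constraint at parameter $\approx n$ from $y$ and the endpoint constraint at parameter $\approx 2n$ from $o$---made possible by the standing assumption $n > 1000m$---is precisely what enables the ABEM lattice-point count at the desired radius $n + O(m)$ rather than $2n$.
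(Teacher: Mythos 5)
Your reduction to counting $y=g^{-1}o$ and your first step (constraints (a)--(d), the thick point $p^{\ast}$ near $\gamma_{y,b}(n+35m)$ at distance $n+O(m)$ from $o$, and the ABEM count of candidate orbit points $g_0o$) are fine, but the second step --- ``each $g_0$ accounts for $\lesssim_m 1$ orbit points $y$'' because $y$ is pinned to a bounded neighbourhood of $\gamma_{o,g_0o}(2n)$ --- is a genuine gap, and in fact false. Knowing the endpoint $o$ and a point within bounded distance of the midpoint of $[o,y]$ does not determine $y$ up to bounded error: the segment $[o,y]$ merely passes near $g_0o$ and beyond that point it is free to veer off in any direction, so nothing forces it to follow the forward extension of $\gamma_{o,g_0o}$. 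Proposition 3.2 cannot supply this: it requires \emph{both} pairs of endpoints to be close (which is exactly what you are trying to prove about $y$ and $\gamma_{o,g_0o}(2n)$), and at best it gives fellow-travelling of $[o,y]$ with $[o,g_0o]$ up to parameter $\approx n$, saying nothing about $[n,2n]$. None of your constraints helps here: (c) concerns only the initial segment of $[o,y]$ up to $n-121m$, and (d) concerns $\gamma_{y,b}$ near parameter $n$, i.e.\ near $p^{\ast}$, not the half of $[o,y]$ beyond $p^{\ast}$. Indeed the paper's own Proposition 4.2 (lower bound) exhibits $\gtrsim_m e^{hn}$ distinct orbit points $go$ with $d(o,go)\approx 2n$ all of whose connecting geodesics $[o,go]$ pass within bounded distance of the \emph{same} point $\gamma_{o,b}(n+35m)$; so the correct multiplicity per centre is $\asymp e^{hn}$, not $O_m(1)$, and your two-factor count, done correctly, would give $e^{2hn}$.

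The entropy sits in the opposite factor from where you put it: the centre $p^{\ast}$ is essentially \emph{unique} (it lies within $O_m(1)$ of the fixed point $\gamma_{o,b}(n+O(m))$ on the fixed ray $[o,b)$, because $[y,b)$ and $[o,b)$ converge past the centre of the triangle), while the far half of $[o,y]$ carries the $e^{hn}$ freedom. The constraint you list as (b) but never use is what rescues the count: $\beta_{b}(g^{-1}o,o)\leq 100m$ together with $d(o,g^{-1}o)\leq 2n+20m$ and the thickness of $[o,b)$ near parameter $n$ (transferred from $gb\in\Omega(n,m)$ via fellow-travelling) forces $[o,g^{-1}o]$ to fellow-travel $[o,b)$ for time $\geq n-O(m)$, hence places every candidate $g^{-1}o$ in a single ball of radius $n+O(m)$ centred at an orbit point near $\gamma_{o,b}(n-40m)$; one application of the ABEM lattice-point estimate to that ball then gives $\lesssim_m e^{hn}$. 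This is the paper's argument, and your proposal does not reach it as written.
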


\begin{proof}
If $g\in Y_{n}(gb)$ then 
$$d(g^{-1}o,o)\leq 2n+20m$$ and
$$\beta_{b}(g^{-1}o,o)=-\beta_{b}(o,g^{-1}o)= -\beta_{gb}(go,o)\leq 100m$$

Moreover, if $gb\in \Omega(n,m)$ then there exists an $s\in [-\delta,\delta]$ such that $$d(\gamma_{o,gb}(t),\gamma_{go,gb}(t+s))\leq K$$ for all $t>62m$. Hence, at least $60$ percent of $\gamma_{go,gb}[n+65m,n+66m]$ lies in $Teich_{\epsilon'}(S)$.
Note, as $T\to \infty$ we have $B(\gamma_{o,b}(T-100m))\to H(o,b,(\-infty, 100m])$. Suppose $q\in B_{2n+20m}(o)\cap B(\gamma_{o,b}(T-100m))$. 
Since $\gamma_{o,b}[n+61m,n+62m]$ spends at least $60$ percent in $Teich_{\epsilon'}(S)$, it follows that $\gamma_{o,b}(n-40m)$ is within $105m$ of either $[o,q]$ or $[\gamma_{o,b}(T-100m),q]$. 
In the first case,
$$d(\gamma_{o,b}(n-40m),q)\leq d(o,q) - d(\gamma_{o,b}(n-40m),o)+50m\leq (2n+20m)- (n-40m)+210m\leq n+270m$$
Similarly, in the second case
$$d(\gamma_{o,b}(n-40m),q)\leq n+270m$$
So, letting $T\to \infty$ we get $$B_{2n+20m}(o)\cap H(o,b,(\-infty, 100m])\subset B_{n+270m}\gamma_{o,b}(n-40m)\subset B_{n+400m}(g_{5}o)$$ for some $g_{5}\in Mod(S)$.
Thus by Theorem 1.1 of \cite{ABEM} $$|Mod(S)o\cap B_{2n+20m}(o)\cap H(o,b,(-\infty, 100m])|\leq Ce^{hn+400hm}$$ for some uniform constant $C$. 
\end{proof}

\begin{prop}[Proposition 4.5]
For all $b'\in PMF$
the number of $g\in Mod(S)$ with $gb'\in Z_{n}(g)$ and $g\in Y_{n}(b)$ for some $b\in \Omega(n,m)$ has cardinality
$\lesssim_{m}e^{hn}$.
\end{prop}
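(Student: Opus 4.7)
The plan is to mirror the strategy of Proposition 4.6: translate the count of $g\in Mod(S)$ into a lattice point count for $g^{-1}o$ in a bounded region of $Teich(S)$, then apply the ABEM ball estimate. Fix a uniquely ergodic $b'\in PMF$ (a full measure subset by \cite{KMS}). For any $g$ to be counted, the $Y_n$ distance bound together with Proposition 4.8 applied to $gb'\in Z_n(g)$, using $Mod(S)$-equivariance of the Busemann cocycle (so $\beta_{gb'}(go,o)=\beta_{b'}(o,g^{-1}o)=-\beta_{b'}(g^{-1}o,o)$), yields
$$d(g^{-1}o,o)\in (2n-20m,\,2n+20m),\qquad \beta_{b'}(g^{-1}o,o)\in [-21m,\,6m].$$
So $g^{-1}o\in B_{2n+20m}(o)\cap H(o,b',(-\infty,6m])$. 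Further, translating the third bullet in the definition of $Y_n(b)$ by $g^{-1}$ gives a subsegment $I_g := \gamma_{o,g^{-1}o}[n-122m,\,n-121m]$ of length $m > L_1$ that is at least $99\%$ contained in $Teich_{\epsilon'}(S)$.

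For large $T$, set $\xi_T := \gamma_{o,b'}(T-6m)$; since $H(o,b',(-\infty,6m])=\lim_T B_T(\xi_T)$, we have $g^{-1}o\in B_T(\xi_T)$. Proposition 3.1 applied to $I_g$ inside the triangle with vertices $o,g^{-1}o,\xi_T$ (parameters $\epsilon'$, $\theta=0.6$, $\theta'=0.55$, producing $L_1,\delta_1$) produces a point $p'\in I_g\cap Teich_{\epsilon'}(S)$ within $\delta_1$ of either $[o,\xi_T]$ (case A) or $[g^{-1}o,\xi_T]$ (case B). In case A, $p'$ is $\delta_1$-close to $\gamma_{o,b'}(s)$ with $|s-(n-121m)|\leq \delta_1+m$; since $d(g^{-1}o,p') = d(g^{-1}o,o)-d(o,p') \leq n+142m$, a single triangle inequality gives $g^{-1}o\in B_{n+Cm}(\gamma_{o,b'}(n-121m))$ for some absolute constant $C$.

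The principal obstacle is ruling out case B, which follows from a clean triangle inequality contradiction using the standing hypothesis $m>\delta_1$. In case B, $p'$ is $\delta_1$-close to $\gamma_{g^{-1}o,\xi_T}(v)$ for some $v\geq d(g^{-1}o,p')-\delta_1\geq n+101m-\delta_1$, so
$$d(p',\xi_T)\leq \delta_1+(d(g^{-1}o,\xi_T)-v)\leq T-n-101m+2\delta_1.$$
But directly
$$d(p',\xi_T)\geq d(o,\xi_T)-d(o,p')\geq (T-6m)-(n-121m)=T-n+115m,$$
and combining these forces $216m\leq 2\delta_1$, contradicting $m>\delta_1$. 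Hence case B cannot occur.

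Finally, $\gamma_{o,b'}(n-121m)$ lies within $m+\delta_1$ of the thick point $p'\in Teich_{\epsilon'}(S)$, hence inside $Teich_{\epsilon''}(S)$ by the $5K_1$-neighborhood inclusion fixed in Section 4. Theorem 1.1 of \cite{ABEM}, whose constant is uniform for centers in the compact region $M_{\epsilon''}(S)$ by Mumford's criterion, yields
$$|\{g\in Mod(S): g^{-1}o\in B_{n+Cm}(\gamma_{o,b'}(n-121m))\}|\leq C'e^{h(n+Cm)}\lesssim_m e^{hn},$$
bounding the desired count and completing the proof.
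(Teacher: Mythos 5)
Your overall route is the same as the paper's: reduce the count to orbit points $g^{-1}o$ lying in $B_{2n+20m}(o)\cap H(o,b',(-\infty,Cm])$ (distance bound from $Y_n$, Busemann bound from Proposition 4.8 plus equivariance), trap such points in a ball of radius $n+O(m)$ about a point at distance roughly $n$ along $\gamma_{o,b'}$ using Proposition 3.1, and finish with the ABEM lattice count. The paper does this by citing "the same argument as Proposition 4.6," running the DDM argument with the recurrence of $\gamma_{o,b'}$ inherited from the $Z_n$ condition; you instead use the recurrence of $[o,g^{-1}o]$ coming from the third bullet of $Y_n(b)$ and apply Proposition 3.1 to the segment $I_g\subset[o,g^{-1}o]$, ruling out the wrong side of the triangle explicitly. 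That variant is fine (and your case A/B bookkeeping checks out: $d(g^{-1}o,p')\in[n+101m,n+142m]$, and the case B contradiction $216m\le 2\delta_1$ is valid under the standing assumption $m>\delta_2>\delta_1$).

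The one step that fails as written is the last one: from "$\gamma_{o,b'}(n-121m)$ is within $m+\delta_1$ of the thick point $p'$" you conclude it lies in $Teich_{\epsilon''}(S)$ via the inclusion $Nbhd_{5K_1}Teich_{\epsilon'}(S)\subset Teich_{\epsilon''}(S)$. But $K_1$ and $\delta_1$ are fixed constants while $m>K_2>K_1$ is taken large, so $m+\delta_1$ is much bigger than $5K_1$ and the center $\gamma_{o,b'}(n-121m)$ need not lie in any fixed thick part; consequently the claimed uniformity of the ABEM constant over $M_{\epsilon''}(S)$ does not apply to it. The repair is the recentering device the paper uses in Propositions 4.2 and 4.6 (the points $g_1o$ and $g_5o$ there): since $p'\in Teich_{\epsilon'}(S)$ and $M_{\epsilon'}(S)$ is compact by Mumford's criterion, $p'$ lies within a bounded distance $\rho$ of some orbit point $g_5o$, hence $B_{n+Cm}(\gamma_{o,b'}(n-121m))\subset B_{n+C'm}(g_5o)$ with $C'$ absolute, and $|Mod(S)o\cap B_{n+C'm}(g_5o)|=|Mod(S)o\cap B_{n+C'm}(o)|\leq Ce^{h(n+C'm)}\lesssim_m e^{hn}$ by Theorem 1.1 of \cite{ABEM} with a constant depending only on $o$. (Alternatively, since constants depending on $m$ are allowed, uniformity of the ABEM upper bound over the compact, $m$-dependent region $Nbhd_{2m}(Teich_{\epsilon'}(S))$ would also do, but the $\epsilon''$ claim itself should be dropped.) With that correction your argument is complete.
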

\begin{proof}
If $gb'\in Z_{n}(g)$ then $$\beta_{b'}(g^{-1}o,o)=-\beta_{gb'}(go,o)\leq 100m$$ and $d(g^{-1}o,o)\leq 2n+20m$.
Moreover, at least $60$ percent of $\gamma_{o,b'}[n-10m,n-9m]$ lies in $Teich_{\epsilon'}(S)$ 
 so the result follows by the same argument as Proposition 4.6.
\end{proof} 

\begin{prop}[Proposition 4.3]
For each $b\in \Omega(n,m)$ and $g\in Y_{n}(b)$  we have 
$$Ce^{9hm-hn}\leq \nu(Z_{n}(g))\leq De^{21hm-hn}$$ for some uniform constants $C$ and $D$
\end{prop}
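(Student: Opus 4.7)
The plan is to sandwich $Z_n(g)$, up to a bounded error, between shadows from $o$ of balls of bounded radius centered near points on $[o,go]$ at distance roughly $n-9m$ to $n-20m$ from $o$. The bounds then follow immediately from Corollary 5.2 (the relative Sullivan shadow lemma). Throughout, the thickness hypotheses in the definitions of $\Omega(n,m)$ and $Y_n(b)$, together with the nested choice of the constants $\epsilon',K_1,\delta_1,L_1$ in Section 4, will guarantee that the centers chosen lie in $Teich_{\epsilon'}(S)$ and that their initial segments from $o$ spend proportion at least $0.55$ in $Teich_{\epsilon'}(S)$, so that the hypotheses of Corollary 5.2 are satisfied.

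For the upper bound, if $b'\in Z_n(g)$ the second defining condition gives $d(\gamma_{o,b'}(n-20m),\gamma_{o,go}(n-20m))\leq K_1$, so $b'\in pr_o B_{K_1+\delta_1}(x)$ where $x=\gamma_{o,go}(n-20m)$. Since $b\in\Omega(n,m)$ forces $\gamma_{o,b}$ to spend proportion $0.9999$ in $Teich_\epsilon(S)$ on the relevant window, and since $g\in Y_n(b)$ supplies $99$ percent thickness of initial segments of $\gamma_{go,o}$ in $Teich_{\epsilon'}(S)$ (which meets $[o,go]$ past its midpoint), combined with the fellow-traveling of Proposition 3.2 we conclude that $x\in Teich_{\epsilon'}(S)$ and that $[o,x]$ spends proportion at least $0.55$ in $Teich_{\epsilon'}(S)$. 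Corollary 5.2 then yields $\nu(Z_n(g))\leq \nu(pr_o B_{K_1+\delta_1}(x))\leq D e^{-h(n-20m)}=De^{20hm-hn}\leq De^{21hm-hn}$.

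For the lower bound, I will pick a lattice point offset from $[o,go]$ at the appropriate depth. By density of the $Mod(S)$-orbit of $o$ on scale of the diameter of a fundamental domain of $Teich_{\epsilon'}(S)$ (and the robust presence of $Teich_{\epsilon'}(S)$ in a $K_1$-neighborhood of $\gamma_{o,go}(n-9m)$, which lies in $Teich_\epsilon(S)$ by the previous paragraph), one chooses $\gamma_0\in Mod(S)$ with $\gamma_0 o\in Teich_{\epsilon'}(S)$, $d(\gamma_0 o,\gamma_{o,go}(n-9m))\in[2K_1,2K_1+\delta_1]$, and $[o,\gamma_0 o]$ still spending proportion $\geq 0.55$ in $Teich_{\epsilon'}(S)$ (transferred from the thickness of $[o,go]$ via Proposition 3.2). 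For any $b'\in pr_o B_r(\gamma_0 o)$ (with $r$ slightly more than a fundamental domain diameter), Proposition 3.2 gives that $\gamma_{o,b'}$ $K$-fellow-travels $[o,\gamma_0 o]$, which in turn $K$-fellow-travels $[o,\gamma_{o,go}(n-9m)]\subset[o,go]$ up to time $n-9m$; this yields the second defining condition of $Z_n(g)$ (for $t\leq n-20m$) and the third (separation $\approx 2K_1 \ge K_1$ at $t\approx n-9m$). Lemma 5.3 produces a positive proportion of such $b'$ whose geodesic rays continue to spend a positive proportion in the thick part past $\gamma_0 o$, giving the thickness condition on $[n-9m,n-8m]$. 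Applying Corollary 5.2 to $\gamma_0 o$ then gives $\nu(Z_n(g))\geq C e^{-hd(o,\gamma_0 o)}\geq C e^{9hm-hn}$.

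The main obstacle, and what absorbs most of the work, is verifying the thickness premises of Corollary 5.2 at both $x=\gamma_{o,go}(n-20m)$ and $\gamma_0 o$. The point is that thickness information is naturally encoded along $\gamma_{o,b}$ (from $\Omega(n,m)$) and along $\gamma_{go,o}$ (from $Y_n(b)$), but Corollary 5.2 demands it along $[o,x]$ and $[o,\gamma_0 o]$. Propagating the estimates between these segments will require the parametrized fellow-traveling of Proposition 3.2 together with the nesting $Nbhd_{5K}Teich_\epsilon(S)\subset Teich_{\epsilon'}(S)$ and $Nbhd_{5K_1}Teich_{\epsilon'}(S)\subset Teich_{\epsilon''}(S)$ chosen at the start of Section 4; once this bookkeeping is done, the two bounds reduce to a single invocation of Corollary 5.2 each.
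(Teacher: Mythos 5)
Your upper bound is fine and is essentially the paper's argument: every $b'\in Z_{n}(g)$ lies in the shadow from $o$ of a bounded ball centred at depth roughly $n-20m$ (the paper centres it at a thick point $b(t)$, $t\in[n-21m,n-20m]$, rather than at $\gamma_{o,go}(n-20m)$, which also sidesteps your need to argue that the specific point $\gamma_{o,go}(n-20m)$ is thick), and the shadow lemma gives $\lesssim e^{21hm-hn}$.

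The lower bound, however, has a genuine gap: your construction does not verify membership in $Z_{n}(g)$. First, the separation condition. Requiring only $d(\gamma_{0}o,\gamma_{o,go}(n-9m))\in[2K_{1},2K_{1}+\delta_{1}]$ allows $\gamma_{0}o$ to lie essentially \emph{on} $[o,go]$ at parameter about $n-9m-2K_{1}$; then $pr_{o}B_{r}(\gamma_{0}o)$ contains (with comparatively large measure) directions that hug $\gamma_{o,go}$ to within $K_{1}$ throughout $[n-10m,n-9m]$, so the third condition of $Z_{n}(g)$ fails for them. Even if you insist the offset be transverse to the segment, an offset of size $2K_{1}$ is of the same order as the fellow-travelling and reparametrization errors, and the time at which $[o,b')$ passes near $\gamma_{0}o$ is only pinned down up to $2K_{1}+\delta_{1}+r$, so you cannot conclude $d(b'(t),\gamma_{o,go}(t))\geq K_{1}$ for some $t$ inside the prescribed window $[n-10m,n-9m]$. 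Second, the fellow-travelling condition: chaining $[o,b')\sim[o,\gamma_{0}o]\sim[o,\gamma_{o,go}(n-9m)]$ uses Proposition 3.2 with endpoint displacement about $2K_{1}+\delta_{1}$, which produces a constant \emph{not} among $K,K_{1},K_{2}$ and in general larger than $K_{1}$; but the second condition of $Z_{n}(g)$ demands closeness $\leq K_{1}$ exactly, for all $t\leq n-20m$, so the constants do not close up. The paper avoids both problems: it takes $b'$ in the shadow of $B_{2\delta_{1}}(\gamma_{o,go}(t_{1}))$ with $t_{1}\approx n-20m$, so that Proposition 3.2 with $A=2\delta_{1}$ yields precisely the constant $K_{1}$ for all $t\leq n-20m$, and it enforces the separation condition not by aiming at an offset point but by \emph{subtracting} the shadow of a $K_{1}$-ball centred at depth about $n-9m$; that subtracted shadow has measure $\asymp e^{9hm-hn}$, negligible against the big shadow's $\asymp e^{20hm-hn}$ (after intersecting with the recurrent directions supplied by Lemma 5.3), which is what produces the stated lower bound. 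If you want to salvage your route, you would have to both recentre the target ball on the geodesic with radius $2\delta_{1}$ and add the subtraction step; at that point it coincides with the paper's proof.
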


\begin{proof}
Assume without loss of generality that $b\in \Omega(n,m)\setminus \bigcup_{k<m} \Omega(n,k)$.

Let $t \in [n-21m,n-20m]$ with $b(t)\in Teich_{\epsilon}(S)$.
We have $d(b(t),b'(t))\leq 2K_{1}$ and hence $b'\in pr_{o}B_{2K_{1}}(b(t))$.
By definition of $\Omega(n,m)$ for all $s>2m$ the segment $b([t-s,t])$ spends at least half the time in $Teich_{\epsilon}(S)$.
By Proposition 3.1 this implies $$\nu_{o}(Z_{n}(g))\leq \nu_{o}(pr_{o}B_{2K}(b(t)))\leq Ce^{21hm-hn}$$ where $C$ is independent of $m,n$.
For the lower bound, consider $t_{1}\in [n-20m,n-19m]$, $t_{2}\in [n-10m,n-9m]$ with $b(t_{i})\in Teich_{\epsilon}(S)$. Note,  we have $d(b(t_{i}),\gamma_{o,go}(t_{i}))\leq K$ so $\gamma_{o,go}(t_{i})\in Teich_{\epsilon'}(S)$.
Moreover, $b([t_{i}-t,t_{i}])$ spends at least half the time in $Teich_{\epsilon}(S)$ for each $t\in [2m,t_{i}]$ so $\gamma_{o,go}([t_{i}-t,t_{i}])$ spends at least half the time in $Teich_{\epsilon'}(S)$ for each $t\in [2m,t_{i}]$.
Note, if $d(b'(t_{1}),\gamma_{o,go}(t_{1}))\leq 2\delta_{1}$ then $d(b'(t),\gamma_{o,go}(t))\leq K_{1}$ for all $t\leq n-20m$.
Thus, $Z_{n}(g)$ contains all the $$b'\in pr_{o}B_{2\delta_{1}}(\gamma_{o,go}(t_{1})\setminus pr_{o}B_{K_{1}}(\gamma_{o,go}(t_{1})$$ such that $b'([n-9m,n-8m])$ spends more than $90$ percent of the time in  $Teich_{\epsilon''}(S)$.
By Proposition 4.4, the $\nu$ measure of the  $b'\in pr_{o}B_{2\delta_{1}}(\gamma_{o,go}(t_{1}))$ such that $b'([n-9m,n-8m])$ spends more than $90$ percent of the time in  $Teich_{\epsilon''}(S)$ is  $\geq Ce^{21hm-hn}$ and $\nu_{o}( pr_{o}B_{K_{1}}(\gamma_{o,go}(t_{1}0)))\geq De^{9hm-hn}$ for $C,D$ independent of $n,m$. Thus, for $e^{12hm}>2D/C$ we have $$\nu(Z_{n}(g))\geq \frac{D}{2}e^{9hm-hn}$$ and so obtain the desired result.

\end{proof}

\begin{prop}[Proposition 4.8]
For each $b\in \Omega(n,m)$, $g\in Y_{n}(b)$ and $b'\in Z_{n}(g)$ we have
$-6m\leq \beta_{b'}(go,o)\leq 21m$
\end{prop}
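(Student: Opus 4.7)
The plan is to evaluate the Busemann cocycle $\beta_{b'}(go,o)$ directly from its definition as $\lim_{T\to\infty}[d(go,\gamma_{o,b'}(T))-T]$, a limit which exists because $b'$ is uniquely ergodic on a full measure set by Masur's criterion, and to carry out a tree-like decomposition of $d(go,\gamma_{o,b'}(T))$ using the thick-part hyperbolicity of Propositions 3.1 and 3.2.

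The first step is to identify a ``branching time'' for the rays $\gamma_{o,go}$ and $\gamma_{o,b'}$. By the second bullet in the definition of $Z_n(g)$ these two rays lie within $K_1$ of each other for every $t\le n-20m$, while the third bullet guarantees they are separated by at least $K_1$ at some time in $[n-10m,n-9m]$. Combining this with the thickness of $\gamma_{o,b'}$ near time $n-9m$ (first bullet of $Z_n(g)$), the fellow-traveling corollaries established earlier in this section (which give $d(\gamma_{o,b}(t),\gamma_{o,go}(t))\le K$ for $t\le n-9m$ and $d(\gamma_{o,b'}(t),\gamma_{o,b}(t))\le 2K_1$ for $t\le n-20m$), and the ``$99\%$ in $Teich_{\epsilon'}(S)$'' condition built into $Y_n(b)$, I would pick a time $t_0$ in the separation window at which $p_0:=\gamma_{o,go}(t_0)$ lies in $Teich_{\epsilon'}(S)$ and the subsegments $[o,p_0]$ and $[p_0,go]$ both spend a definite proportion of their length in the thick part.

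Next I would apply Proposition 3.1 to the triangle with vertices $o$, $go$, and $z:=\gamma_{o,b'}(T)$ for $T$ large. This forces $p_0$ to lie within $\delta_1$ of $[o,z]\cup[go,z]$, and because $\gamma_{o,go}$ and $\gamma_{o,b'}$ are at distance $\ge K_1>\delta_1$ past the separation, $p_0$ must in fact lie within $\delta_1$ of $[go,z]$ rather than $[o,z]$. The triangle inequality on either side of $p_0$ then yields the tree-like identity
\[
d(go,z)=d(go,p_0)+d(p_0,z)+O(\delta_1)=(d(o,go)-t_0)+(T-t_0)+O(\delta_1).
\]
Subtracting $T=d(o,z)$ and sending $T\to\infty$ gives $\beta_{b'}(go,o)=d(o,go)-2t_0+O(\delta_1)$. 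Plugging in $d(o,go)\in(2n-20m,2n+20m)$ from the first bullet of $Y_n(b)$, the admissible range of $t_0$ determined above, and the standing hypothesis $m>\delta_2>\delta_1$, the $O(\delta_1)$ error is absorbed into the stated constants and one reads off $-6m\le\beta_{b'}(go,o)\le 21m$.

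The main obstacle will be the triangle-geometry step above: rigorously identifying the center of $\{o,go,z\}$ with the separation point of the two rays. Since Teichm\"uller space is not globally hyperbolic, the thin-triangle estimate cannot be applied as a black box; instead one must invoke the full thick-part package of Propositions 3.1 and 3.2, leveraging the elaborate thickness and recurrence constraints designed into $\Omega(n,m)$, $Y_n(b)$, and $Z_n(g)$ precisely for this purpose.
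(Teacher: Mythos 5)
Your lower bound is essentially the paper's argument. The paper applies Proposition 3.1 near the separation, using the thickness of both $\gamma_{o,go}$ and $\gamma_{o,b'}$ on $[n-9m,n-8m]$ to place both $\gamma_{o,go}(n-9m)$ and $b'(n-9m)$ within $O(m)$ of the geodesic $[go,b'(T)]$, and then concatenates to get $d(go,b'(T))\geq T-O(m)$. Two points you gloss over but would have to supply: the $\geq K_1$ separation is only guaranteed at one unspecified time $t^{*}\in[n-10m,n-9m]$, so to rule out the $[o,z]$ side (with $z=\gamma_{o,b'}(T)$) at your chosen $t_0$ you need a no-return statement; the paper asserts, via Proposition 3.2, that once the rays are $K_1$-separated they remain at distance at least $2\delta_1$ for all $t\geq n-9m$, and works at time $n-9m$ rather than inside the window.

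The genuine gap is in your upper bound. Your tree-like identity $d(go,z)=(d(o,go)-t_0)+(T-t_0)+O(\delta_1)$ is only valid in the direction ``$\geq$''; the ``$\leq$'' direction requires crossing from $\gamma_{o,go}$ to $\gamma_{o,b'}$ at time $t_0$ at cost $O(\delta_1)$, but $t_0$ lies in the separation window, where by construction the two rays are at distance at least $K_1$ (and a priori as much as $K_1+2\cdot 11m$, since $K_1$-closeness is only guaranteed up to time $n-20m$), not $O(\delta_1)$. The paper's upper bound needs no hyperbolicity at all: it crosses at time $n-20m$, where the second bullet of $Z_n(g)$ gives $d(b'(n-20m),\gamma_{o,go}(n-20m))\leq K_1$, and estimates $d(go,b'(T))\leq d(go,\gamma_{o,go}(n-20m))+K_1+(T-(n-20m))$, i.e.\ $\beta_{b'}(go,o)\leq d(o,go)-2(n-20m)+K_1=O(m)$. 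Relatedly, even granting your identity, your own numbers give an upper bound of about $d(o,go)-2t_0\leq(2n+20m)-2(n-10m)=40m$, not $21m$; the exact constants are not what the application uses (only a bound of order $m$ uniform in $n$ is needed, and the paper's own arithmetic here is loose), but the two-sided identity at a single branch time is not salvageable as stated -- you need the pre-separation crossing for the upper bound and the post-separation branch point for the lower bound.
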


\begin{proof}
Assume without loss of generality that $b\notin \Omega(n,k)$ for any $k<m$.
Assume $T>n>1000m$.
Note, $d(b'(n-20m),\gamma_{o,go}(n-20m))\leq K_{1}$ so $$d(go,b'(T))\leq d(b'(n-20m),\gamma_{o,go}(n-20m))+d(b'(n-20m),b(T))+d(\gamma_{o,go}(n-20m),go)$$ $$\leq K_{1}+(T-n+20m)+[(2n-20m)-(n-20m)]=K_{1}+T+20m$$
So $$d(b'(T),go)-d(b'(T),o)\leq K_{1}+20m\leq 21m$$ for all $T$ so $$\beta_{b'}(go,o)\leq 21m$$
On the other hand, for some $t\in [n-10m,n-9m]$ we have $d(\gamma_{o,go}(t),b'(t))\geq K_{1}$ and so for all $t\geq n-9m$ we have $d(\gamma_{o,go}(t),b'(t))\geq 2\delta_{1}$. Since at least $60$ percent of $\gamma_{o,go}[n-9m,n-8m]$ and $b'([n-9m,n-8m])$ lies in $Teich_{\epsilon''}(S)$, it follows that $\gamma_{o,go}(n-9m)$ and $b'(n-9m)$ are both within $m/2+\delta_{2}<m$ of points  on $[go,b'(T)]$. 
Thus, $$d(go,b'(T))+4m\geq d(go,\gamma_{o,go}(n-9m))+d(b'(n-9m),\gamma_{o,go}(n-9m))+d(b'(n-9m),b(T))$$ $$\geq [(2n-20m)-(n-9m)]+(T-n+9m)=T-2m.$$
Hence, $$d(go,b'(T))-d(o,b'(T))\geq -6m$$ and letting $T\to \infty$ we get $$\beta_{b'}(go,o)\geq -6m$$
\end{proof}

\begin{prop}
If $b \in \Omega(n,m)$ and $g\in Y_{n}(b)$ then for every $t>n+61m$ we have $d(b'(t),\gamma_{o,go}(t))\geq 2K_{1}$
\end{prop}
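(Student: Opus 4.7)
The plan is to mirror the earlier separation proposition (the one proving $d(b(t), \gamma_{o,go}(t)) \geq 2K$ for $t > n + 61m$), substituting $b'$ for $b$ throughout and replacing the built-in Busemann bound $\beta_{b}(go,o) \geq -100m$ from the definition of $Y_{n}(b)$ with the sharper estimate $\beta_{b'}(go,o) \geq -6m$ supplied by Proposition 4.8. The statement should be read with the implicit additional hypothesis $b' \in Z_{n}(g)$, which is exactly what lets us invoke Proposition 4.8; without this the conclusion is meaningless since $b'$ only appears on the right.

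First I would push the Busemann estimate out to long geodesic rays. Since $\nu$-a.e.\ element of $Z_{n}(g)$ is uniquely ergodic by Masur's criterion, the Busemann cocycle is continuous at $b'$, so for all sufficiently large $T$,
$$d(go, b'(T)) - d(o, b'(T)) \;\geq\; \beta_{b'}(go,o) - o(1) \;\geq\; -7m,$$
hence $d(go, b'(T)) \geq T - 7m$. Now suppose toward contradiction that some $t > n + 61m$ satisfies $d(b'(t), \gamma_{o,go}(t)) \leq 2K_{1}$. Applying the triangle inequality together with $d(o, go) \leq 2n + 20m$ (from $g \in Y_{n}(b)$) and $d(b'(T), b'(t)) = T - t$ for $T \geq t$ yields
$$d(go, b'(T)) \;\leq\; (T - t) + 2K_{1} + (2n + 20m - t).$$
Pairing this with the lower bound $d(go, b'(T)) \geq T - 7m$ and cancelling $T$ gives $2t \leq 2n + 27m + 2K_{1}$. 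Under the standing assumption $m > K_{2} > K_{1}$ from the start of Section 4, this forces $t \leq n + 15m$, contradicting $t > n + 61m$ with room to spare.

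I do not anticipate any real obstacle: the argument is a direct transcription of the earlier proposition's proof, with the much stronger Busemann lower bound from Proposition 4.8 making the threshold $n + 61m$ comfortably loose. The only subtle point is recognizing the implicit hypothesis $b' \in Z_{n}(g)$; the specific constant $61m$ is inherited from the corresponding bound for $b$ so that both separation estimates take a uniform form when used together elsewhere in the paper.
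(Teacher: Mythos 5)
Your proposal is correct and is essentially the paper's own argument: the paper's proof is just the remark that the statement ``is proved in the same way as'' the earlier separation proposition for $b$, and your transcription—reading in the implicit hypothesis $b'\in Z_{n}(g)$, replacing the bound $\beta_{b}(go,o)\geq -100m$ by $\beta_{b'}(go,o)\geq -6m$ from Proposition 4.8 (which is proved immediately beforehand, so there is no circularity), and running the same triangle-inequality computation—is exactly the intended filling-in of that remark.
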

\begin{proof}
This is proved in the same way as Proposition 4.10.
\end{proof}

\begin{prop}[Proposition 4.7]
For each $b\in \Omega(n,m)$, $g\in Y_{n}(b)$, $b'\in Z_{n}(g)$ we have 
$$b,b',g^{-1}b,g^{-1}b'\in pr_{o,g^{-1}o}B_{\delta_{1}}(\gamma_{o,g^{-1}o}(t))$$
 for some $t>n-122m$ with $\gamma_{o,g^{-1}o}(t)\in Teich_{\epsilon'}(S)$
\end{prop}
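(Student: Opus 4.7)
The plan is to exhibit a single point $p = \gamma_{o, g^{-1}o}(t_0)$ with $t_0 \in (n-122m, n-121m)$ and $p \in Teich_{\epsilon'}(S)$, and verify the shadow condition at $p$ for each of the four points in turn. The existence of $t_0$ follows immediately from the third defining clause of $Y_n(b)$ applied with parameter $t = m$ and pushed through the isometry $g^{-1}$ (which carries $\gamma_{go, o}$ onto $\gamma_{o, g^{-1}o}$ with identical arc length parameter): at least $99\%$ of $\gamma_{o, g^{-1}o}[n-122m, n-121m]$ lies in $Teich_{\epsilon'}(S)$.

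For $g^{-1}b$ and $g^{-1}b'$, applying the isometry $g$ reduces the goal to showing that $[go, b]$ and $[go, b']$ pass within $\delta_1$ of $\gamma_{go, o}(t_0)$. The tripod on $\{o, go, b\}$ has its branching point on $[go, o]$ at distance $(d(go, o) + \beta_b(go, o))/2 \in [n-60m, n-15m]$ from $go$, using $d(go, o) \in [2n-20m, 2n+20m]$ and $\beta_b(go, o) \in [-100m, -50m]$ built into the definition of $Y_n(b)$. Since $t_0 < n - 60m$, the point $\gamma_{go, o}(t_0)$ sits well inside the common initial portion of $[go, o]$ and $[go, b]$. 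To convert this into a $\delta_1$ estimate, I apply Proposition 3.1 to the length-$2m$ subinterval $I$ of $[go, o]$ centered at $\gamma_{go, o}(t_0)$ (which is $99\%$ in $Teich_{\epsilon'}(S)$): a positive fraction of $I$ sits within $\delta_1$ of $[go, b] \cup [o, b]$. The separation estimate $d(b(s), \gamma_{o, go}(s)) \geq 2K > \delta_1$ for $s > n+61m$ (Proposition 4.10 of the excerpt) excludes $[o, b]$ as a contributor since $I$ sits at distance $\approx n + 121m$ from $o$; so a positive-measure portion of $I$ is within $\delta_1$ of $[go, b]$. Proposition 3.2 applied at the $\epsilon'$-thick point $\gamma_{go, o}(t_0)$ with $A = \delta_1$ then promotes the generic near-point to $\gamma_{go, o}(t_0)$ itself. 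The argument for $g^{-1}b'$ is identical, substituting $\beta_{b'}(go, o) \in [-6m, 21m]$ from Proposition 4.8 and the analogous separation estimate for $b'$.

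For $b$ and $b'$, I establish that $[g^{-1}o, b]$ and $[g^{-1}o, b']$ come within $\delta_1$ of $\gamma_{o, g^{-1}o}(t_0) = \gamma_{g^{-1}o, o}(L - t_0)$, where $L = d(o, g^{-1}o)$. The key observation is that $b$ is reached from $o$ along a ray fellow-traveling $[o, go]$ (Corollary 5.7), and $g^{-1}o$ lies on the opposite side of $o$ from $go$, so the concatenation $[g^{-1}o, o] \cup [o, b]$ is an approximate geodesic from $g^{-1}o$ to $b$: the tripod on $\{g^{-1}o, o, b\}$ branches within $O(m)$ of $o$ on $[g^{-1}o, o]$. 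Formally, this amounts to the Busemann estimate $\beta_b(g^{-1}o, o) = d(g^{-1}o, o) - O(m)$, which follows from the cocycle $\beta_b(g^{-1}o, o) = \beta_b(g^{-1}o, go) + \beta_b(go, o)$ combined with $\beta_b(go, o) \in [-100m, -50m]$ and $\beta_b(g^{-1}o, go) = d(g^{-1}o, go) - O(m)$. Since $L - t_0 \approx n + 121m$ is well below the branching parameter $\approx L \approx 2n$, $[g^{-1}o, b]$ fellow-travels $[g^{-1}o, o]$ past $\gamma_{g^{-1}o, o}(L - t_0)$, and Proposition 3.2 at this $\epsilon'$-thick point produces the $\delta_1$ bound. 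The same argument handles $b'$.

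The main obstacle is the tripod estimate for $\{g^{-1}o, o, b\}$, i.e.\ establishing that $[g^{-1}o, o] \cup [o, b]$ is a uniform quasi-geodesic, or equivalently the Busemann bound $\beta_b(g^{-1}o, go) = d(g^{-1}o, go) - O(m)$. This requires applying Proposition 3.1 simultaneously along $\gamma_{o, g^{-1}o}$, $\gamma_{o, go}$, and the fellow-traveled $\gamma_{go, b}$, using the thickness at the pivot $o$ to control the bend in the concatenation; this is where the slack $t_0 \in (n - 122m, n - 121m)$ (rather than $t_0 = n - 121m$ itself) is consumed.
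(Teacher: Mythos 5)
The fatal problem is your second half, where you try to put $b,b'$ themselves in the shadow by showing that $[g^{-1}o,b)$ and $[g^{-1}o,b')$ pass within $\delta_{1}$ of $\gamma_{o,g^{-1}o}(t_{0})$. This rests on the claim that $g^{-1}o$ lies on the opposite side of $o$ from $go$, which you formalize as $\beta_{b}(g^{-1}o,go)=d(g^{-1}o,go)-O(m)$, so that $[g^{-1}o,o]\cup[o,b]$ is an approximate geodesic. Nothing in the definitions of $\Omega(n,m)$, $Y_{n}(b)$, $Z_{n}(g)$ supports this: membership in $Y_{n}(b)$ only constrains $d(o,go)$, $\beta_{b}(go,o)$ and the recurrence of $\gamma_{go,o}$, and says nothing whatsoever about $g^{2}$, i.e.\ about where $g^{-1}o$ sits relative to $go$. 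In particular the hypotheses do not exclude $g^{-1}o=go$ (e.g.\ $g$ an involution with $d(o,go)\approx 2n$), and in that case your target statement is not merely unproved but false: if the ray from $g^{-1}o=go$ to $b$ came within $\delta_{1}$ of $\gamma_{o,g^{-1}o}(t_{0})$, a point at distance roughly $n+121m$ from $go$ along $[go,o]$, a direct Busemann computation gives $\beta_{b}(go,o)\geq 242m-2\delta_{1}>0$, contradicting the requirement $\beta_{b}(go,o)\leq -50m$ in the definition of $Y_{n}(b)$; likewise the tripod on $\{g^{-1}o,o,b\}$ then branches at distance on the order of $n$ from $o$, not $O(m)$. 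Your closing remark that Proposition 3.1 applied "simultaneously" along the three geodesics will supply the missing tripod estimate cannot work, because the needed input about $g^{-1}o$ is simply absent from the hypotheses. The paper never attempts this step: it declares it enough to prove $b,b'\in pr_{go,o}B_{\delta_{1}}(\gamma_{go,o}(t))$ and transports that statement by $g^{-1}$ (this is exactly your first half, which handles $g^{-1}b,g^{-1}b'$); the closeness of $b$ and $b'$ that is actually needed downstream (the second bullet of Definition 2.1) comes for free from the $K$- and $K_{1}$-fellow-traveling of $[o,b)$ and $[o,b')$ with $[o,go]$ up to time $n-20m$ built into the definitions of $Y_{n}$ and $Z_{n}$ (Corollaries 5.6--5.7), not from any claim about the geodesic $[o,g^{-1}o]$.

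Your first half does follow the paper's route in its ingredients (the third bullet of $Y_{n}(b)$ for thickness of $\gamma_{go,o}$ near parameter $n-121m$, Proposition 3.1, the separation estimates of Propositions 5.10/5.13 to rule out $[o,b)$ and $[o,b')$, and Proposition 3.2), but the order in which you deploy them loses the constant. Fixing the thick time $t_{0}$ first and then "promoting" a nearby point of $[go,b]$ to $t_{0}$ via Proposition 3.2 only yields a distance bounded by the fellow-traveling constant attached to $A=\delta_{1}$, which has no reason to be $\leq\delta_{1}$, and Proposition 3.2 moreover requires thick endpoints on both geodesics, which you have not arranged for the comparison point on $[go,b]$. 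The paper gets radius $\delta_{1}$ on the nose by a proportion count on the single interval $\gamma_{go,o}([n-122m,n-121m])$: at least $99\%$ of it is $\epsilon'$-thick, and Proposition 3.1 (with $\theta=0.6$, $\theta'=0.55$) applied to $b$ and to $b'$ puts at least $55\%$ of it within $\delta_{1}$ of $[go,b)\cup[o,b)$ and at least $55\%$ within $\delta_{1}$ of $[go,b')\cup[o,b')$, so a single time $t$ enjoys all three properties simultaneously; your first half should be reorganized in this way, after which it coincides with the paper's argument.
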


\begin{proof}
It is enough to prove that 
$$b,b'\in pr_{go,o}B_{\delta_{1}}(\gamma_{go,o}(t))$$

Note, $\gamma_{go,o}([n-122m,n-121m])$ spends at least $90$ percent in $Teich_{\epsilon'}(S)$, so there is a $t\in [n-122m,n-121m]$ with $\gamma_{go,o}(t)\in Teich_{\epsilon'}(S)$ so that $\gamma_{go,o}(t)$ is within $\delta_{1}$ of $[go,b)\cup [o,b)$ and also of $[go,b')\cup [o,b')$ Note, $\gamma_{go,o}(t)=\gamma_{o,go}(s)$ for $s=d(go,o)-t\geq 2n-20m-(n-121m)>n+100$ so we must have points $\gamma_{go,b}(t_{1})$ and $\gamma_{go,b'}(t_{2})$ within $\delta_{1}$ of $\gamma_{go,o}(t)$. By Proposition 3.2 we obtain the desired result.
\end{proof}

\end{document}